\title[Thin surface groups in non-uniform lattices]{Zariski-dense surface groups in non-uniform lattices of split real Lie groups}
\author{\firstname{Jacques} \lastname{Audibert}}
\address{
Sorbonne Université and Université Paris Cité, CNRS, IMJ-PRG, F-75005 Paris, France.}
\email{audibert.j@outlook.fr}
\theoremstyle{plain}
\newtheorem{proposition}{Proposition}[section]
\newtheorem{theorem}[proposition]{Theorem}
\newtheorem{lemma}[proposition]{Lemma}
\newtheorem{thmx}{Theorem}
\newtheorem{propx}{Proposition}[section]
\newtheorem{corox}{Corollary}[section]
\theoremstyle{definition}
\newtheorem{definition}[proposition]{Definition}
\newtheorem{remark}[proposition]{Remark}
\newcommand{\SL}{\textnormal{SL}}
\DeclareMathOperator{\SU}{SU}
\DeclareMathOperator{\GL}{GL}
\newcommand{\G}{\textnormal{G}}
\DeclareMathOperator{\A}{A}
\DeclareMathOperator{\N}{N}
\DeclareMathOperator{\D}{D}
\DeclareMathOperator{\M}{M}
\DeclareMathOperator{\B}{B}
\DeclareMathOperator{\Nred}{N_{red}}
\newcommand{\SO}{\textnormal{SO}}
\DeclareMathOperator{\Gal}{Gal}
\DeclareMathOperator{\Aut}{Aut}
\DeclareMathOperator{\Inn}{Inn}
\DeclareMathOperator{\PSL}{PSL}
\DeclareMathOperator{\PGL}{PGL}
\DeclareMathOperator{\Br}{Br}
\DeclareMathOperator{\Det}{Det}
\DeclareMathOperator{\I}{I}
\DeclareMathOperator{\Int}{Int}
\DeclareMathOperator{\Hom}{Hom}
\DeclareMathOperator{\PSp}{PSp}
\DeclareMathOperator{\J}{J}
\DeclareMathOperator{\Diag}{Diag}
\DeclareMathOperator{\PP}{P}
\DeclareMathOperator{\X}{X}
\DeclareMathOperator{\T}{T}
\DeclareMathOperator{\HH}{H}
\DeclareMathOperator{\W}{W}
\newcommand{\Sp}{\textnormal{Sp}}
\DeclareMathOperator{\Q}{Q}
\DeclareMathOperator{\disc}{disc}
\DeclareMathOperator{\SSS}{S}
\newcommand{\R}{\mathds{R}}
\newcommand{\QQ}{\mathds{Q}}
\DeclareMathOperator{\Tr}{Tr}
\DeclareMathOperator{\MCG}{MCG}
\newcommand{\doublefaktor}[3]{%
    {\textstyle #1}
    \mkern-4mu\scalebox{1.5}{$\diagdown$}\mkern-5mu^{\textstyle #2}%
    \mkern-4mu\scalebox{1.5}{$\diagup$}\mkern-5mu{\textstyle #3} }
\def\@tocline#1#2#3#4#5#6#7{\relax
  \ifnum #1>\c@tocdepth 
  \else
    \par \addpenalty\@secpenalty\addvspace{#2}%
    \begingroup \hyphenpenalty\@M
    \@ifempty{#4}{%
      \@tempdima\csname r@tocindent\number#1\endcsname\relax
    }{%
      \@tempdima#4\relax
    }%
    \parindent\z@ \leftskip#3\relax \advance\leftskip\@tempdima\relax
    \rightskip\@pnumwidth plus4em \parfillskip-\@pnumwidth
    #5\leavevmode\hskip-\@tempdima
      \ifcase #1
       \or\or \hskip 1em \or \hskip 2em \else \hskip 3em \fi%
      #6\nobreak\relax
    \dotfill\hbox to\@pnumwidth{\@tocpagenum{#7}}\par
    \nobreak
    \endgroup
  \fi}
\keywords{Discrete subgroups of Lie groups, arithmetic groups, Hitchin representations}
\subjclass{22E40}
\begin{document}

\begin{abstract}
    For $\SL(n,\mathds{R})$ ($n\geq3$), $\SO(n+1,n)$ ($n\geq2$), $\Sp(2n,\mathds{R})$ ($n\geq2$) and for the adjoint real split form of the exceptional group $\G_2$, we exhibit non-uniform lattices in which we construct \emph{thin Hitchin} representations by arithmetic methods. These representations give infinitely many orbits under the action of the mapping class group. In particular, we show that when $p\neq2$ is prime every non-uniform lattice of $\SL(p,\mathds{R})$ contains thin Hitchin representations.
\end{abstract}

\begin{altabstract}
    Pour $\SL(n,\mathds{R})$ ($n\geq3$), $\SO(n+1,n)$ ($n\geq2$), $\Sp(2n,\mathds{R})$ ($n\geq2$) et pour la forme réelle adjointe déployée du groupe exceptionnel $\G_2$, nous exhibons des réseaux non-uniformes dans lesquels nous construisons des représentations \emph{Hitchin fines} par des méthodes arithmétiques. Ces représentations donnent une infinité d'orbites sous l'action du groupe modulaire. En particulier, nous montrons que quand $p\neq 2$ est premier tout réseau non-uniforme de $\SL(p,\mathds{R})$ contient des représentations Hitchin fines.
\end{altabstract}  

\maketitle

\tableofcontents

\section*{Introduction}

Let $\G$ be a semi-simple algebraic Lie group. A \emph{lattice} in $\G$ is a discrete subgroup $\Gamma$ such that $\G/\Gamma$ has finite volume with respect to the Haar measure. We say that $\Gamma$ is \emph{uniform} if $\G/\Gamma$ is compact, \emph{non-uniform} otherwise. A subgroup $\Pi$ of a lattice $\Gamma$ in $\G$ is said to be \emph{thin} if it is of infinite index in $\Gamma$ and Zariski-dense in $\G$. Thin groups are an active field of research (see Sarnak \cite{Sarnak_NotesThinMatrixGroups} and \cite{Kontorovich_Whatisathingroup}). In particular, there is a good deal of interest in constructing thin surface subgroups in lattices \cite{Kassel_Groupesdesurface}. The main purpose of this paper is to find thin surface groups in non-uniform lattices of the classical split groups and the adjoint split form of $\G_2$. Out method is arithmetic and does not use Kahn-Markovic's technique \cite{Kahn_Immersingsurfacesinhyerbolicthreemanifold}.

Denote by $S_g$ a closed connected orientable surface of genus $g$ at least 2, $\pi_1(S_g)$ its fundamental group and $\MCG(S_g)=\Aut(\pi_1(S_g))/\Int(\pi_1(S_g))$ its mapping class group. The surface groups we construct are images of representations that lie in the \emph{Hitchin component} of the corresponding split group $\G$ (see Hitchin \cite{Hitchin_LiegroupsTeichmullerspace}). This is a special connected component of the character variety $\mathfrak{X}(\pi_1(S_g),\G)=\Hom(\pi_1(S_g),\G)/\G$, which for $\G=\PSL(n,\mathds{R})$ can be defined as follows.

For any $n\geq2$, denote by $\tau_{n}:\SL(2,\mathds{C})\rightarrow\SL(n,\mathds{C})$ the representation where
\begin{equation*}
    \begin{pmatrix}
    a&b\\c&d
    \end{pmatrix}
    \in \SL(2,\mathds{C})
\end{equation*}
acts on the space of homogeneous polynomials in two variables $X$ and $Y$ of degree $n-1$ as
\begin{equation*}
    \begin{pmatrix}
    a&b\\c&d
    \end{pmatrix}
    X^{n-i-1}Y^i=(aX+cY)^{n-i-1}(bX+dY)^i
\end{equation*}
for every $0\leq i\leq n-1$. We call $\tau_n$ \emph{the irreducible representation} of $\SL(2,\mathds{C})$. We denote also by $\tau_{n}:\PSL(2,\mathds{C})\rightarrow\PSL(n,\mathds{C})$ the induced representation on $\PSL(2,\mathds{C})$. The Hitchin component is the connected component of $\mathfrak{X}(\pi_1(S_g),\PSL(n,\mathds{R}))$ that contains the equivalence class of a representation of the form $\tau_n\circ j$ with $j:\pi_1(S_g)\to\PSL(2,\mathds{R})$ faithful and discrete. We call \emph{Hitchin representation} a representation whose equivalence class is in the Hitchin component. An important feature of Hitchin representations is the following result.

\begin{theorem}[Labourie \cite{Labourie_Anosovflowssurfaceandcurves}, Fock and Goncharov \cite{Fock_ModuliSpacesofLocalsystems}]
Hitchin representations are discrete and faithful.
\end{theorem}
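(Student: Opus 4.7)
The plan is to prove that every Hitchin representation $\rho$ is Anosov with respect to a minimal parabolic subgroup of $\PSL(n,\R)$, because Anosov representations are automatically discrete and faithful: the equivariant transverse boundary maps combined with the exponential contraction along the geodesic flow of $S_g$ force $\rho(\pi_1(S_g))$ to act properly on the associated symmetric space, and force every non-trivial element to be $\R$-regular, hence in particular of infinite order and nontrivial.

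Step one is to verify the Anosov property at a base point of the Hitchin component. Given a Fuchsian uniformization $j:\pi_1(S_g)\to\PSL(2,\R)$, one has an equivariant Hölder homeomorphism from $\partial\pi_1(S_g)$ to the boundary circle of $\mathds{H}^2$; postcomposing with the $\tau_n$-equivariant embedding of this circle as the rational normal curve in the full flag variety of $\R^n$ yields an equivariant Frenet curve. The contraction condition reduces to the elementary observation that $\tau_n$ sends a hyperbolic element of $\PSL(2,\R)$ with eigenvalues $\lambda^{\pm 1}$ to an element whose eigenvalues $\lambda^{n-1},\lambda^{n-3},\dots,\lambda^{-(n-1)}$ have pairwise distinct moduli, so $\tau_n\circ j$ is proximal on every flag factor.

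The crux is then to propagate the property over the entire connected component. I would invoke Labourie's stability theorem: being Anosov is an open condition on $\Hom(\pi_1(S_g),\PSL(n,\R))$ and the associated equivariant limit maps depend continuously on the representation. Labourie then shows that along any path inside the Hitchin component the limit curve remains Frenet, namely injective and transverse in the sense that the images of any $n$ distinct boundary points span $\R^n$ in general position. This closedness step is the main obstacle, as one must rule out collisions of points on the limit curve by a cross-ratio argument exploiting continuity together with the non-degeneracy at the Fuchsian base point.

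Once Anosovness is established, discreteness and faithfulness are formal consequences of the general theory. As an independent route, Fock and Goncharov introduce positive coordinates on the Hitchin component in which every nontrivial $\rho(\gamma)$ is represented by a totally positive matrix, which is semisimple with distinct positive real eigenvalues; this directly forces $\rho$ to be discrete and faithful without appealing to the Anosov formalism.
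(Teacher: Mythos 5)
This statement is quoted in the paper as an external theorem with attributions to Labourie and to Fock--Goncharov; the paper contains no proof of it, so there is nothing internal to compare your argument against. Judged on its own terms, your proposal is a faithful high-level outline of the two known proofs rather than a proof: the genuinely difficult content is precisely the step you flag as ``the main obstacle,'' namely Labourie's theorem that the limit map stays a Frenet curve (injective, with the full hyperconvexity/transversality conditions on sums of osculating spaces, not just that $n$ distinct points span $\R^n$) along every path in the Hitchin component. That closedness argument occupies most of Labourie's paper and cannot be reduced to ``a cross-ratio argument exploiting continuity''; openness of the Anosov condition alone does not propagate the property over a whole connected component. Your verification at the Fuchsian base point is correct ($\tau_n$ of a hyperbolic element has eigenvalues $\lambda^{n-1},\lambda^{n-3},\dots,\lambda^{-(n-1)}$ with pairwise distinct moduli, so $\tau_n\circ j$ is Borel--Anosov), and ``Anosov implies discrete and faithful'' is indeed standard. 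Two smaller inaccuracies in the Fock--Goncharov route: what they prove is that $\rho(\gamma)$ is \emph{positive hyperbolic}, i.e.\ conjugate to a diagonal matrix with distinct positive eigenvalues (not that it is itself a totally positive matrix), and this pointwise spectral property does not ``directly force'' discreteness --- their discreteness proof goes through the positivity of the equivariant limit configuration of flags, which is again a global argument about the boundary map. So the architecture you describe is the right one, but as written the proposal defers the two theorems that actually carry the load.
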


Faithfulness of Hitchin representations plays a crucial role in our construction. We say that a Hitchin representation is \emph{thin} if its image is a thin subgroup of a lattice of $\G$. The question we address is: which lattices of $\G$ contain the image of a thin Hitchin representation?

The lattices we will consider are \emph{$\mathds{Q}$-arithmetic subgroups} (we introduce this notation to simplify the exposition). A \emph{$\mathds{R}/\mathds{Q}$-form} of $\G$ is a $\mathds{Q}$-algebraic group that is isomorphic to $\G$ as an $\mathds{R}$-algebraic group. We say that two subgroups $\Gamma_1$ and $\Gamma_2$ of $\G(\mathds{R})$ are \emph{commensurable} if $\Gamma_1\cap\Gamma_2$ has finite index in both $\Gamma_1$ and $\Gamma_2$. If $\HH$ is a $\mathds{Q}$-algebraic subgroup of $\GL_n$, we define its \emph{$\mathds{Z}$-points} as $\HH(\mathds{Q})\cap\GL_n(\mathds{Z})$. This latter depends on the embedding of $\HH$ in $\GL_n$, but two different embeddings give rise to commensurable subgroups.

\begin{definition}
A $\mathds{Q}$-arithmetic subgroup is a subgroup of $\G$ that is conjugate to a subgroup commensurable with the integer points of a $\mathds{Q}$-form of $\G$.
\end{definition}
Such groups are lattices of $\G$ as shown by Harish-Chandra. They are arithmetic groups, but not all arithmetic groups are $\mathds{Q}$-arithmetic. However, if $\G$ is simple and not compact, every non-uniform arithmetic group is $\mathds{Q}$-arithmetic (see Morris's book \cite{Morris_IntroductionArithmeticGroups} corollary 5.3.2).

In this paper we find thin Hitchin representations with image in some $\mathds{Q}$-arithmetic subgroups of the Lie groups $\SL(n,\mathds{R})$, $\SO(k+1,k)$, $\Sp(2n,\mathds{R})$ and $\G_2$. We start by fixing a cocompact $\mathds{Q}$-arithmetic subgroup of $\SL(2,\mathds{R})$. Let $a,b\in\mathds{N}^*$ and define \begin{equation*}
    \Gamma_{a,b}=\left\{\begin{pmatrix}
    x_0+\sqrt{a}x_1&\sqrt{b}x_2+\sqrt{ab}x_3\\
    \sqrt{b}x_2-\sqrt{ab}x_3&x_0-\sqrt{a}x_1
    \end{pmatrix}\ |\ x_i\in\mathds{Z},\ \Det=1\right\}.
\end{equation*}
This is a $\mathds{Q}$-arithmetic subgroup of $\SL(2,\mathds{R})$ (and, up to conjugation and commensurability, all $\mathds{Q}$-arithmetic subgroups of $\SL(2,\mathds{R})$ are of this form). The lattice $\Gamma_{a,b}$ is cocompact if and only if $ax^2+by^2=1$ has no solution $(x,y)\in\mathds{Q}^2$. If it is cocompact, then $\mathds{H}^2/\Gamma_{a,b}$ admits a finite cover which is a closed surface of genus at least 2 and we construct representations of the fundamental group of this surface.

We begin by classifying the $\mathds{Q}$-arithmetic subgroups $\Lambda$ of $\G$ that contain $\tau_n(\Gamma_{a,b})$ up to finite index, where $\G$ is one of the Lie groups above. This is the core of the proof and uses nonabelian Galois cohomology.
The group $\tau_n(\Gamma_{a,b})$ is not Zariski-dense in $\G$ since it lies in $\tau_n(\SL(2,\mathds{R}))$. We then ``bend" the representation enough so that it sill lies in $\Lambda$ but is Zariski-dense.
The bending technique was introduced by Johnson and Millson \cite{Johnson_DeformationSpacesCompactHyperbolicManifolds} and has already been used to construct thin subgroups, for example in Long and Thistlethwaite \cite{Long_ZariskidensesurfaceSL2k+1Z} or Ballas and Long \cite{Ballas_ConstructingThinSubgroupsSLn+1RBending}. To bend, we need to find a simple closed curve on the surface which has a big enough centralizer in $\Lambda$. For $\Lambda=\SL(n,\mathds{Z})$, we fail to find such a curve.
Using nonabelian Galois cohomology, we are able to compute the centralizer of this curve in $\Lambda$ explicitly. Since the deformed representations lie in the Hitchin component, we know that they are still faithful. Their image will be thus a thin surface subgroup of $\Lambda$ (the lattices considered here cannot be commensurable to a surface group as they have virtual cohomological dimension at least $3$ \cite{Aramayona_GeometricDimensionLattices}).

We first state our results for $n$ odd. For $d\in\mathds{N}$ not a square and $\sigma\in\Gal(\mathds{Q}(\sqrt{d})/\mathds{Q})$ non-trivial define
$$\SU(\I_n,\sigma;\mathds{Z}[\sqrt{d}])=\{\M\in\SL(n,\mathds{Z}[\sqrt{d}])\ |\ \sigma(\M)^\top\M=\I_n\}$$ where $\sigma(\M)$ consists of applying $\sigma$ to all entries of $\M$ and $\M^\top$ denotes the transposition of $\M$. This is a lattice of $\SL(n,\mathds{R})$. Here is a description of the $\mathds{Q}$-arithmetic subgroups of $\SL(n,\mathds{R})$ that contain $\tau_n(\Gamma_{a,b})$.

\begin{propx}
\label{proposition2}
    Let $\Gamma$ be a $\mathds{Q}$-arithmetic subgroup of $\SL(2,\mathds{R})$ and $n\geq3$ be odd.
    Then $\tau_n(\Gamma)$ lies in a subgroup of $\SL(n,\mathds{R})$ commensurable with a conjugate of $\SL(n,\mathds{Z})$ and in a subgroup commensurable with a conjugate of $\SU(\I_n,\sigma;\mathds{Z}[\sqrt{d}])$ for every $d\in\mathds{N}$ not a square with $\sigma\in\Gal(\mathds{Q}(\sqrt{d})/\mathds{Q})$ non-trivial.
    
    Furthermore these are the only $\mathds{Q}$-arithmetic subgroups of $\SL(n,\mathds{R})$ that contain $\tau_n(\Gamma)$ up to commensurability.
\end{propx}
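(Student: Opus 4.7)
The plan is to enumerate, via nonabelian Galois cohomology, the $\mathds{Q}$-forms $H$ of $\SL_n$ which (i) give lattices of $\SL(n,\mathds{R})$ and (ii) admit a $\mathds{Q}$-morphism from the $\mathds{Q}$-form $G$ of $\SL_2$ underlying $\Gamma$, with the morphism becoming conjugate to $\tau_n$ over $\mathds{R}$. Any $\mathds{Q}$-arithmetic lattice of $\SL(n,\mathds{R})$ containing $\tau_n(\Gamma)$ up to commensurability and $\SL(n,\mathds{R})$-conjugation corresponds to such a pair $(H,\phi\colon G\to H)$, obtained by taking the Zariski closure of a suitable conjugate of $\tau_n(\Gamma)$ inside $H$: this closure is a $\mathds{Q}$-subgroup by Borel density, from which one recovers the $\mathds{Q}$-morphism. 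For $n\geq 3$ one has $\Aut(\SL_n)=\PGL_n\rtimes\mathds{Z}/2$, yielding inner forms $\SL_1(A)$ (for a CSA $A$ of degree $n$) and outer forms $\SU(h,\sigma;K)$ (for a quadratic extension $K/\mathds{Q}$ and a Hermitian form $h$); lattices of $\SL(n,\mathds{R})$ require $A$ split at $\infty$ in the inner case and $K$ real quadratic in the outer case.

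For the inner case, $\tau_n\colon\PGL_2\to\PGL_n$ induces $\mathrm{Br}(\mathds{Q})[2]\to\mathrm{Br}(\mathds{Q})[n]$; since $\gcd(2,n)=1$ for $n$ odd and this map factors through $H^1(\mathds{Q},\SL_n)=0$ by Hilbert 90, only the split inner form $\SL_n$ is compatible with $G$, yielding the lattice $\SL(n,\mathds{Z})$.

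For the outer case, the key input is self-duality of $\tau_n$: the representation $\mathrm{Sym}^{n-1}$ of $\SL_2$ preserves a non-degenerate symmetric bilinear form $q$ (unique up to scalar by Schur's lemma), and its Gram matrix $Q$ satisfies $Q\tau_n(g)Q^{-1}=\tau_n(g)^{-\top}$, so the outer involution $\theta$ of $\SL_n$ preserves the $\mathds{Q}$-subgroup $\tau_n(\SL_2)$ and restricts to it as the inner automorphism $\mathrm{Ad}(Q)$. Consequently, for each real quadratic $K=\mathds{Q}(\sqrt{d})$ one can build a compatible cocycle $c_H$ whose outer part produces an outer $\mathds{Q}$-form $\SU(h,\sigma;K)$ containing a twist of $\tau_n(G)$ as a $\mathds{Q}$-subgroup; working out the cocycle relation and the Hermitian condition $\sigma(h)^{\top}=h$ shows that $h$ is Hermitian-equivalent over $K$ to $\I_n$, and that no other Hermitian class is compatible. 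This gives the lattice $\SU(\I_n,\sigma;\mathds{Z}[\sqrt{d}])$, and assembled with the inner-case conclusion completes the classification.

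The main obstacle will be the outer-forms bookkeeping: carefully identifying the Hermitian form $h$ forced by the cocycle compatibility (as a function of $d$ and of the quaternion algebra $B$ underlying $G$), and then verifying that this $h$ is always Hermitian-equivalent over $K$ to $\I_n$ via a local invariants computation (discriminant in $\mathds{Q}^{*}/N_{K/\mathds{Q}}(K^{*})$ together with the archimedean invariants at the two real places of $K$), for every real quadratic $K$ and every $G$. In particular the proof must track how the specific $\mathds{Q}$-rational intertwiner $Q$, the cocycle defining $G$, and the outer cocycle for $\SU(\I_n,\sigma;K)$ interact, which is where the structure of $\tau_n$ as a self-dual representation is used decisively.
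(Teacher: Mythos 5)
Your proposal is correct and follows essentially the same route as the paper: classify the cocycles valued in $\Aut(\SL_n)=\PGL_n\rtimes\langle\omega\rangle$ compatible with $\tau_n$, kill the inner (Brauer) obstruction using that $n$ is odd, and in the outer case use the self-dual structure of $\tau_n$ (the invariant form $\J_n$) together with the classification of Hermitian forms over real quadratic fields by rank and discriminant to identify the form with $\I_n$. The only point to watch is that a general outer form of $\SL_n$ is a unitary group over a division algebra with involution of the second kind, not just over $K$ itself; compatibility with $\tau_n$ does force the algebra to split (by the same odd-$n$ Brauer argument restricted to $\Gal(\overline{\mathds{Q}}/K)$), which is exactly how the paper handles it.
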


\begin{remark}
As a consequence, even though $\tau_n(\Gamma_{a,b})$ has coefficients in $\mathds{Z}[\sqrt{a},\sqrt{b}]$ it can be conjugated to a subgroup of $\SL(n,\mathds{Z})$.
\end{remark}

The proposition implies that the corresponding locally symmetric spaces
\begin{equation*}
    \doublefaktor{\SO(n)}{\SL(n,\mathds{R})}{\Lambda}
\end{equation*}
where $\Lambda$ is (a torsion free finite index subgroup of) one of the arithmetic subgroups listed above, contains a totally geodesic surface of irreducible type, i.e. such that the copy of the hyperbolic plane in the universal cover of the locally symmetric space comes from an irreducible embedding of $\SL(2,\mathds{R})$ in $\SL(n,\mathds{R})$. Except when $\Lambda=\SL(n,\mathbb{Z})$, we manage to bend the representation $\tau_n$ to make it Zariski-dense.

\begin{thmx}
\label{theorem1}
    Let $n\geq3$ be odd. Let $d\in\mathds{N}$ be not a square and $\sigma\in\Gal(\mathds{Q}(\sqrt{d})/\mathds{Q})$ non-trivial. There exists $g\geq2$ such that the $\mathds{Q}$-arithmetic subgroup $\SU(\I_{n},\sigma;\mathds{Z}[\sqrt{d}])$ of $\SL(n,\mathds{R})$ contains infinitely many $\MCG(S_g)$-orbits of thin Hitchin representations of $\pi_1(S_g)$.
\end{thmx}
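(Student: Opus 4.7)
Choose $a, b \in \mathds{N}^*$ so that $\Gamma_{a,b}$ is cocompact in $\SL(2,\mathds{R})$, and pass to a torsion-free finite-index subgroup identified with $\pi_1(S_g)$ through a faithful discrete embedding $j : \pi_1(S_g) \hookrightarrow \SL(2,\mathds{R})$; this fixes a genus $g \geq 2$. The Hitchin representation $\rho_0 := \tau_n \circ j$ satisfies, by Proposition~\ref{proposition2} and after conjugating then shrinking to a further finite-index subgroup, $\rho_0(\pi_1(S_g)) \subset \Lambda := \SU(\I_n,\sigma;\mathds{Z}[\sqrt{d}])$. Its image is contained in the conjugate of $\tau_n(\SL(2,\mathds{R}))$, hence not Zariski dense. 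The plan is to deform $\rho_0$ inside $\Lambda$, by Johnson--Millson bending, into infinitely many $\MCG(S_g)$-inequivalent Zariski-dense Hitchin representations.

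Fix a non-separating simple closed curve $\gamma$ on $S_g$. The element $\rho_0(\gamma)$ is hyperbolic regular, and its centralizer in the algebraic group $\SU(\I_n,\sigma)$ is a maximal $\mathds{Q}$-torus $T$, $\mathds{R}$-split of rank $n-1$. Extending the Galois-cohomological analysis of Proposition~\ref{proposition2} to this centralizer, one identifies $T$ in a suitable basis as the diagonal torus $\{\Diag(u_1,\dots,u_n) : u_i \in \mathds{Q}(\sqrt{d})^*,\ \sigma(u_i) u_i = 1,\ \prod_i u_i = 1\}$. By Dirichlet's unit theorem (equivalently, the infinitude of solutions of Pell's equation $x^2 - d y^2 = \pm 1$), the group of norm-one units of $\mathds{Z}[\sqrt{d}]$ has rank one, so $T \cap \Lambda$ is a lattice of full rank $n-1$ in $T(\mathds{R})$. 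This is the decisive difference with the case $\Lambda = \SL(n,\mathds{Z})$: the $\Lambda$-centralizer of $\rho_0(\gamma)$ is vastly bigger than its centralizer inside $\tau_n(\SL(2,\mathds{R}))$, which is only one-dimensional.

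Since $\gamma$ is non-separating, $\pi_1(S_g)$ is an HNN extension over $\langle \gamma \rangle$ with stable letter $t$. For $c \in T \cap \Lambda$ I define $\rho_c$ by leaving the values on $\pi_1(S_g \setminus \gamma)$ unchanged and setting $\rho_c(t) = c \cdot \rho_0(t)$; because $c$ centralizes $\rho_0(\gamma)$, the HNN relation is preserved and $\rho_c$ is a well-defined homomorphism with image in $\Lambda$. Joining $\I_n$ to $c$ by a continuous path inside the identity component $T(\mathds{R})^\circ$ produces a continuous family $\rho_{c_s}$ starting at $\rho_0$; since the Hitchin component is an open and closed subset of the character variety, every $\rho_{c_s}$ along the path is Hitchin, hence faithful and discrete by the theorem of Labourie and Fock--Goncharov recalled above.

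There remain two points: Zariski density and infinitely many $\MCG(S_g)$-orbits. Let $H_c$ be the Zariski closure of $\rho_c(\pi_1(S_g))$; it is reductive and contains both $\tau_n(\SL(2,\mathds{R}))$ (from one side of the HNN splitting) and its conjugate $c \, \tau_n(\SL(2,\mathds{R})) \, c^{-1}$. Dynkin's classification of connected reductive subgroups of $\SL(n,\mathds{R})$ containing a principal $\SL(2)$ is short enough that containment of two distinct such copies forces $H_c = \SL(n,\mathds{R})$ except when $c$ lies in a proper subvariety of $T$; one checks that this subvariety does not contain $T \cap \Lambda$. Taking $c = u^k$ with $u \in T \cap \Lambda$ of infinite order thus yields a Zariski-dense Hitchin representation $\rho_{u^k}$ in $\Lambda$ for all but finitely many $k \in \mathds{Z}$. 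Finally, as $|k| \to \infty$ the translation length of $\rho_{u^k}(t)$ diverges, so an $\MCG(S_g)$-invariant real-valued quantity (for instance the volume entropy, or the first Lyapunov exponent) varies non-trivially with $k$; this separates the family $\{[\rho_{u^k}]\}$ into infinitely many $\MCG(S_g)$-orbits. The main obstacle is the Zariski-density step: because $T \cap \Lambda$ is discrete, no infinitesimal genericity argument is available, and one must rely on the rigidity of reductive subgroups of $\SL(n,\mathds{R})$ containing an irreducible $\tau_n(\SL(2,\mathds{R}))$ to conclude.
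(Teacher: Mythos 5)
Your overall architecture (restrict $\tau_n\circ j$ to a torsion-free cocompact arithmetic surface group landing in $\SU(\I_n,\sigma;\mathds{Z}[\sqrt{d}])$ via Proposition~\ref{proposition2}, bend along a simple closed curve using the centralizer of its holonomy in the lattice, use the rigidity of reductive subgroups containing a principal $\SL_2$ to certify Zariski density) is the paper's. But two steps are asserted where the paper has to work, and a third is a genuine gap. First, you fix an \emph{arbitrary} non-separating simple closed curve and claim its $\Lambda$-centralizer is a full-rank lattice in a maximal torus cut out by norm-one units of $\mathds{Z}[\sqrt{d}]$. This is not automatic: the arithmetic of the centralizer depends on the quadratic field generated by the eigenvalues of $j(\gamma)$, which varies with $\gamma$. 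The paper must first produce a simple closed curve whose holonomy is diagonal over a controlled field (Lemma~\ref{lemma20}, where simplicity is certified by a positivity computation special to the quaternion order), and then compute the diagonal elements of the lattice in two separate cases according to whether $\sqrt{a}\in\mathds{Q}(\sqrt{d})$ (Lemma~\ref{diagonalelements}); your description of $T$ matches only one of these cases. Second, your Zariski-density step ends with ``one checks that this subvariety does not contain $T\cap\Lambda$,'' which is precisely the crux: the paper makes it checkable by observing (Theorem~\ref{theoremGuichard} plus Lemma~\ref{lemma12}) that for $n$ odd every proper Zariski closure of a Hitchin representation preserves the form $\J_n$, so it suffices to exhibit a diagonal unit $\B$ with positive entries and $\B^\top\J_n\B\neq\J_n$, e.g. $\Diag(\omega^4,1,\dots,1,\omega^{-2},\omega^{-2})$. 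Both points are repairable but currently unproved.

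The serious gap is the final step. The translation length of the single element $\rho_{u^k}(t)$ is not an $\MCG$-invariant, and you give no argument that volume entropy or a Lyapunov exponent actually takes infinitely many values along $\{\rho_{u^k}\}$; a priori such an invariant could be constant or assume only finitely many values on this family, and its divergence does not follow from the divergence of one matrix norm. The paper's argument is arithmetic and entirely different: the trace set $\Tr(\rho(\pi_1(S)))$ is an $\MCG$-invariant; by strong approximation (Theorem~\ref{Strongapproximation}, Proposition~\ref{propositionSUZ}) every Zariski-dense $\rho_{\B^k}$ has surjective trace reduction modulo almost every prime; if there were only finitely many orbits, one could pick a single prime $p$ outside a finite exceptional set at which the polynomial $\PP$ with $\Tr\circ\tau_n=\PP\circ\Tr$ fails to be surjective on $\mathds{F}_p$, and a power $k$ with $\B^k\equiv\I_n$ modulo $p$; then $\Tr(\rho_{\B^k})$ reduces into the image of $\PP$, contradicting surjectivity. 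You need either to actually prove that your dynamical invariant separates infinitely many of the $\rho_{u^k}$, or to replace this step by an argument of this arithmetic type.
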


Equivalently, the $\mathds{Q}$-arithmetic subgroup $\SU(\I_{n},\sigma;\mathds{Z}[\sqrt{d}])$ contains infinitely many $\SL(n,\mathds{R})$-conjugacy classes of Zariski-dense surface subgroups of genus $g$ which are the image of Hitchin representations. 
Theorem \ref{theorem1} should be compared to the result of Borel which states that there are only finitely many arithmetic surfaces of a given genus up to isometry (see Theorem 11.3.1 in \cite{Maclachlan_ArithmeticHyperbolic3Manifolds}). To guarantee that the representations we construct are not in the same $\MCG(S_g)$-orbit, we show that their sets of traces have different congruence properties.

Note that the group $\SL(n,\mathds{Z})$ for $n$ odd is not part of our list but has been shown to contain thin Hitchin representations by Long and Thistlethwaite \cite{Long_ZariskidensesurfaceSL2k+1Z}. This result was also announced by Burger, Labourie and Wienhard (see Theorem 24 in \cite{Wienhard_InvitationHigherTeichmuller} and \cite{Burger_OnIntegerPointHitchin}).
Together with Proposition \ref{propositionprime} below, this implies the following.

\begin{corox}
\label{corollary1}
Let $p\neq2$ be a prime. All non-uniform lattices of $\SL(p,\mathds{R})$ contain a thin Hitchin representation.
\end{corox}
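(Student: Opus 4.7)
The plan is to combine Theorem~\ref{theorem1} with the theorem of Long and Thistlethwaite, via a classification up to conjugation and commensurability of the non-uniform lattices of $\SL(p,\mathds{R})$. Since $p\geq 3$, the group $\SL(p,\mathds{R})$ has real rank $p-1\geq 2$, so Margulis arithmeticity gives that every lattice is arithmetic; because $\SL(p,\mathds{R})$ is simple and non-compact, the footnote to the definition of $\mathds{Q}$-arithmetic subgroup further asserts that every non-uniform arithmetic lattice is $\mathds{Q}$-arithmetic. It therefore suffices to enumerate the $\mathds{Q}$-forms of $\SL(p,\mathds{R})$ giving rise to non-uniform lattices.

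Any such $\mathds{Q}$-form is either inner, of the shape $\SL_1(D)$ for a central simple $\mathds{Q}$-algebra $D$ of degree $p$ with $D\otimes_{\mathds{Q}}\mathds{R}\cong M_p(\mathds{R})$, or outer, of the shape $\SU(B,\tau)$ with $B$ a central simple algebra of degree $p$ over a real quadratic extension $K=\mathds{Q}(\sqrt{d})$ and $\tau$ an involution of the second kind extending the nontrivial element of $\Gal(K/\mathds{Q})$. Godement's criterion for non-uniformity forces positive $\mathds{Q}$-rank, and hence the underlying central simple algebra to be split: a Hermitian form of rank one over a division algebra is anisotropic. Since $p$ is prime, the only non-division central simple $\mathds{Q}$-algebra of degree $p$ is $M_p(\mathds{Q})$; in the inner case this yields a lattice commensurable with $\SL(p,\mathds{Z})$, while in the outer case we are reduced to the unitary groups $\SU(h,K/\mathds{Q})$ of an isotropic Hermitian form $h$ on $K^p$.

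The key step is to show that for $p$ odd, all such Hermitian forms yield isomorphic $\mathds{Q}$-algebraic groups. Since $K$ is totally real, the Hasse principle together with Landherr's classification shows that Hermitian forms over $K$ are determined by their rank and their discriminant in $\mathds{Q}^{\times}/N_{K/\mathds{Q}}(K^{\times})$. Rescaling $h$ by $\lambda\in\mathds{Q}^{\times}$ multiplies the discriminant by $\lambda^p$, which in this quotient---of exponent two since $\lambda^2 = N_{K/\mathds{Q}}(\lambda)$ for $\lambda\in\mathds{Q}^{\times}$---equals $\lambda$ whenever $p$ is odd. Hence $\SU(h,K/\mathds{Q})\cong\SU(\I_p,K/\mathds{Q})$ as $\mathds{Q}$-algebraic groups, and after conjugation in $\SL(p,\mathds{R})$ the associated lattice is commensurable with $\SU(\I_p,\sigma;\mathds{Z}[\sqrt{d}])$.

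Combining the two cases, every non-uniform lattice of $\SL(p,\mathds{R})$ is commensurable, up to conjugation, with either $\SL(p,\mathds{Z})$ or $\SU(\I_p,\sigma;\mathds{Z}[\sqrt{d}])$ for some non-square $d\in\mathds{N}$. The former contains a thin Hitchin representation by Long--Thistlethwaite \cite{Long_ZariskidensesurfaceSL2k+1Z} and the latter by Theorem~\ref{theorem1}; since thinness is preserved by passing to conjugate and commensurable subgroups (any conjugate of a thin subgroup is thin, and if $\Pi<\Gamma_1$ is thin then $\Pi\cap\Gamma_1\cap\Gamma_2$ is thin in any commensurable $\Gamma_2$), the corollary follows. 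I expect the main obstacle to be the outer-form classification: the scalar rescaling argument depends crucially on $p$ being both odd and prime, and on a careful invocation of the Hasse principle for Hermitian forms over a totally real quadratic extension.
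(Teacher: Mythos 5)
Your proposal is correct and follows essentially the same route as the paper: the paper proves this corollary by combining Theorem \ref{theorem1}, the Long--Thistlethwaite result for $\SL(p,\mathds{Z})$, and Proposition \ref{propositionprime}, whose proof is exactly the classification you carry out (Margulis arithmeticity plus the footnote reducing to $\mathds{Q}$-arithmetic groups, elimination of the degree-$p$ division-algebra cases as uniform, and the rank-plus-discriminant classification of $\sigma$-Hermitian forms with the scalar rescaling by $\lambda^p\equiv\lambda$ for $p$ odd). The only cosmetic difference is that you invoke Godement's criterion where the paper simply cites the uniformity statements from Morris's classification.
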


For $p=3$, Long, Reid and Thistlethwaite proved in \cite{Long_ZariskidensesurfaceSL3Z} and \cite{Long_ConstructingThin} that all non-uniform lattices of $\SL(3,\mathds{R})$ contain thin Hitchin representations.

We also construct Zariski-dense surface groups in $\mathds{Q}$-arithmetic subgroups of other split Lie groups. For $p,q\in\mathds{N}$ define \begin{equation*}
\I_{p,q}=\begin{pmatrix}\I_p&\\
&-\I_q
\end{pmatrix}.\end{equation*}

\begin{thmx}
\label{theorem3}
Let $k\geq2$. For each of the following $\mathds{Q}$-arithmetic subgroups $\Lambda$ of $\SO(\I_{k+1,k},\mathds{R})$, there exists $g\geq 2$ such that $\Lambda$ contains infinitely many $\MCG(S_g)$-orbits of thin Hitchin representations of $\pi_1(S_g)$:
\begin{itemize}
    \setlength\itemsep{0cm}
    \item if $k\equiv0,3[4]:$ $\SO(\I_{k+1,k},\mathds{Z})$ 
    \item if $k\equiv1,2[4]:$ $\SO(\Q,\mathds{Z})$ for $\Q\in\SL(2k+1,\mathds{Q})$ a symmetric matrix of signature $(k+1,k)$ which is not equivalent to $\I_{k+1,k}$ over $\mathds{Q}$.
\end{itemize}
In particular, for $k\equiv1,2[4]$, every non-uniform lattice of $\SO(\I_{k+1,k},\mathds{R})$ not commensurable with a conjugate of $\SO(\I_{k+1,k},\mathds{Z})$, contains infinitely many $\MCG(S_g)$-orbits of thin Hitchin representations of $\pi_1(S_g)$ for some $g\geq2$.
\end{thmx}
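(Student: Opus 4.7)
The plan is to adapt the three-step scheme of the introduction to $\SO(\I_{k+1,k},\mathds{R})$: first classify the $\mathds{Q}$-arithmetic subgroups that contain $\tau_{2k+1}(\Gamma_{a,b})$ up to commensurability, then identify which class occurs depending on $k$ modulo $4$, and finally bend along a suitably chosen simple closed curve to obtain a Zariski-dense Hitchin representation whose image remains in the target lattice.

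First I would exploit the fact that $n=2k+1$ is odd to see that the bilinear form $\omega^{\otimes 2k}$ on $\mathds{C}[X,Y]_{2k}$ is symmetric, $\tau_{2k+1}$-invariant, has rational matrix in the monomial basis (supported on the antidiagonal with signed binomial entries), and real signature $(k+1,k)$. Thus $\tau_{2k+1}(\SL(2,\mathds{R}))$ lies in the $\mathds{R}$-points of a specific $\mathds{Q}$-form of $\SO(k+1,k)$, whose rational similitude class is recorded by its discriminant and Hasse invariants. A direct combinatorial computation would then yield the key dichotomy: this class coincides with that of $\I_{k+1,k}$ if and only if $k\equiv 0,3\pmod 4$.

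The technical core, an analog of Proposition~\ref{proposition2}, is to classify via nonabelian Galois cohomology of $\Aut(\SO(k+1,k))$ the $\mathds{Q}$-arithmetic subgroups of $\SO(\I_{k+1,k},\mathds{R})$ that contain $\tau_{2k+1}(\Gamma_{a,b})$ up to commensurability, and to show that these are exactly the groups $\SO(\Q,\mathds{Z})$ for $\Q$ in the rational similitude class of the $\tau_{2k+1}$-invariant form. Combined with the discriminant computation this gives the statement. To deform, I would pass to a torsion-free finite-index subgroup $\Gamma<\Gamma_{a,b}$ so that $\mathds{H}^2/\Gamma=S_g$ is closed of genus $g\geq 2$, pick a simple closed curve $\gamma\subset S_g$, and bend $\tau_{2k+1}$ along $\gamma$ \`a la Johnson--Millson. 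The bending parameters live in the centralizer $Z_\Lambda(\tau_{2k+1}(\gamma))$; since $\tau_{2k+1}(\gamma)$ has the $2k+1$ distinct eigenvalues $\lambda^{2k},\lambda^{2k-2},\ldots,\lambda^{-2k}$ (with $\lambda$ the expanding eigenvalue of $\gamma$), it is regular semisimple and its centralizer in $\SO(\I_{k+1,k},\mathds{R})$ is a maximal $\mathds{R}$-torus of rank $k$.

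The main obstacle I anticipate is to show that this torus is defined over $\mathds{Q}$ inside the chosen $\mathds{Q}$-form and has infinitely many $\mathds{Z}$-points in $\Lambda$. Here the Galois-cohomological description of $\Lambda$ is indispensable, as it realises the centralizer as an explicit $\mathds{Q}$-torus whose arithmetic is controlled by the eigenvalue field $\mathds{Q}(\lambda)$ of $\gamma\in\Gamma_{a,b}$; Dirichlet-type arguments then yield infinitely many integral bending parameters producing a family $\{\rho_t\}$ with values in $\Lambda$. Each $\rho_t$ stays in the Hitchin component, hence is discrete and faithful by Labourie and Fock--Goncharov; a standard Zariski-closure argument (the closure is reductive, contains the principal $\SL_2$, and contains a non-trivial bending element off it) forces $\rho_t(\pi_1(S_g))$ to be Zariski-dense in $\SO(\I_{k+1,k},\mathds{R})$ for all but countably many $t$, while infinite index in $\Lambda$ follows from the virtual cohomological dimension comparison cited in the introduction. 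Finally, I would obtain infinitely many $\MCG(S_g)$-orbits by fixing a curve $\eta$ disjoint from $\gamma$, observing that $\Tr(\rho_t(\eta))$ varies non-trivially with $t$, and invoking the countability of $\MCG(S_g)$ together with the countability of any single trace-level set.
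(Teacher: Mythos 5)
Your outline reproduces the general three-step scheme of the paper, but two of its load-bearing steps are wrong as stated.

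First, the classification step. You claim that the $\mathds{Q}$-arithmetic subgroups of $\SO(\I_{k+1,k},\mathds{R})$ containing $\tau_{2k+1}(\Gamma_{a,b})$ are the groups $\SO(\Q,\mathds{Z})$ with $\Q$ in the rational similitude class of the $\tau_{2k+1}$-invariant form $\J_n$, and that the dichotomy in $k\bmod 4$ comes from comparing that class with $\I_{k+1,k}$. This cannot be right: $\J_n$ is anti-diagonal, hence splits off $k$ hyperbolic planes over $\mathds{Q}$ and is equivalent to $\pm\I_{k+1,k}$ for \emph{every} odd $n$, so your criterion would always place the surface group in (a conjugate of) $\SO(\I_{k+1,k},\mathds{Z})$ and could never produce the $k\equiv1,2\ [4]$ half of the theorem. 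The point you are missing is that $\tau_n(\Gamma_{a,b})$ has entries in $\mathds{Z}[\sqrt a,\sqrt b]$, so the $\mathds{Q}$-form of $\SO(k+1,k)$ it determines is the \emph{twist} of $\SO(\J_n)$ by the cocycle $\sigma\mapsto\tau_n(\T^{a,b}_\sigma)$; one must untwist via Hilbert's Theorem 90, and the resulting rational form $\SSS^{-\top}\J_n\SSS^{-1}$ acquires an extra Hasse-invariant factor involving $(a,b)_{\mathds{Q}_p}$ (Lemma \ref{Qformsorthogonal}). That factor is trivial exactly when $n\equiv\pm1\ [8]$, i.e.\ $k\equiv0,3\ [4]$; when $k\equiv1,2\ [4]$ and $(a,b)_{\mathds{Q}}$ is a division algebra (which is forced if $\Gamma_{a,b}$ is to be cocompact), the form is \emph{never} equivalent to $\I_{k+1,k}$, and by varying the ramification set of the quaternion algebra one realizes every class $\Q\not\sim\I_{k+1,k}$. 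Without this computation the statement of the theorem is out of reach.

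Second, the count of $\MCG(S_g)$-orbits. Distinguishing the representations $\rho_t$ by $\Tr(\rho_t(\eta))$ for a fixed curve $\eta$ only separates conjugacy classes: a mapping class $\phi$ sends $\Tr(\rho(\eta))$ to $\Tr(\rho(\phi(\eta)))$, so a single $\MCG$-orbit (which is countably infinite) can perfectly well contain your entire countable family, and "countability of any trace-level set" gives nothing. The $\MCG$-invariant one must use is the full trace \emph{set} $\Tr(\rho(\pi_1(S)))$. The paper's argument is arithmetic: by strong approximation (Nori's theorem in the non-simply-connected orthogonal case), any Zariski-dense subgroup of $\SO(\Q,\mathds{Z})$ has trace set surjecting onto $\mathds{F}_p$ for almost all $p$; but choosing the bending power so that $\B^m\equiv\I\ [p]$ gives $\Tr(\rho_{\B^m})\equiv\PP(\Tr(j))\ [p]$, which misses values of $\mathds{F}_p$ for infinitely many $p$ since $\PP$ is not a permutation polynomial mod infinitely many primes. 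This reduction-mod-$p$ mechanism, absent from your proposal, is what forces infinitely many orbits. (A smaller but real issue: certifying Zariski-density "for all but countably many $t$" is useless here since the admissible bending parameters form a countable set; you need density for specific integral $\B$, which the paper gets from Guichard's classification together with Lemmas \ref{lemma11}--\ref{lemma13}, including the extra step of excluding the closure $\textbf{G}_2(\mathds{R})$ when $n=7$.)
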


This is shown by establishing an analogue of Proposition \ref{proposition2} in the case of $\SO(\I_{k+1,k},\mathds{R})$.

\begin{thmx}
\label{theorem4}
There exists $g\geq2$ such that the group $\emph{\textbf{G}}_2(\mathds{Z})$, which is a $\mathds{Q}$-arithmetic subgroup of $\emph{\textbf{G}}_2(\mathds{R})$, contains infinitely many $\MCG(S_g)$-orbits of thin Hitchin representations of $\pi_1(S_g)$.
\end{thmx}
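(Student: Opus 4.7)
The plan is to follow the template of Theorems \ref{theorem1} and \ref{theorem3}. First I would fix the principal embedding $\tau\colon \SL(2)\to \mathbf{G}_2$, which is a $\mathds{Q}$-morphism for the split $\mathds{Q}$-form of $\mathbf{G}_2$ and factors through $\PSL(2)$ (the $7$-dimensional irreducible $\SL(2)$-representation is $\Sym^6$, on which $-\I_2$ acts trivially). Choosing $a,b\in \mathds{N}^*$ so that $\Gamma_{a,b}/\{\pm \I_2\}$ is cocompact and torsion-free, I obtain a uniformization $j\colon \pi_1(S_g)\xrightarrow{\sim} \Gamma_{a,b}/\{\pm \I_2\}$ for the resulting $g\geq 2$, and set $\rho_0 := \tau\circ j$; this is a discrete, faithful Hitchin representation of $\pi_1(S_g)$ whose image, after a rational change of basis, lies in $\mathbf{G}_2(\mathds{Z})$.

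The first substantial step is a $\mathbf{G}_2$-analog of Proposition \ref{proposition2}: show that the only $\mathds{Q}$-arithmetic subgroup of $\mathbf{G}_2(\mathds{R})$ containing $\tau(\Gamma_{a,b})$ up to commensurability with a conjugate is $\mathbf{G}_2(\mathds{Z})$. Via nonabelian Galois cohomology the $\mathds{Q}$-forms of $\mathbf{G}_2$ correspond to isomorphism classes of octonion algebras over $\mathds{Q}$; requiring such a form to contain the principal arithmetic $\SL(2)$-image and to be $\mathds{R}$-split should force it to be the split octonions, yielding uniqueness.

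For the bending I would pick a simple closed curve $\gamma$ on $S_g$ such that $\rho_0(\gamma)$ is regular semisimple. Its centralizer $Z$ in $\mathbf{G}_2$ is a $2$-dimensional $\mathds{Q}$-torus containing the image of the $\mathds{Q}(\sqrt{a})/\mathds{Q}$-anisotropic centralizer of the corresponding hyperbolic element in $\SL(2)$; in particular $Z$ is not $\mathds{Q}$-split and $Z(\mathds{Z})$ is infinite by Dirichlet's unit theorem for tori. Writing $\pi_1(S_g)$ as an amalgamated product or HNN extension along $\langle\gamma\rangle$, define $\rho_c$ by conjugating the factor on one side of $\gamma$ by $c\in Z(\mathds{Z})$; each $\rho_c$ then lies in $\mathbf{G}_2(\mathds{Z})$, and since $Z(\mathds{R})^0$ is path-connected, $\rho_c$ stays in the Hitchin component and is therefore discrete and faithful.

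Finally, I would show that infinitely many of the $\rho_c$ are Zariski-dense in $\mathbf{G}_2$ and pairwise non-conjugate. For Zariski-density, use that the principal $\SL(2)$ is irreducible on the $7$-dimensional representation of $\mathbf{G}_2$ whereas both proper maximal semisimple subgroups of $\mathbf{G}_2$, namely $\SL(3)$ and $\SO(4)$, act reducibly; hence no proper semisimple subgroup contains $\tau(\SL(2))$, and a generic $c$ excludes a parabolic Zariski closure of $\rho_c(\pi_1(S_g))$. Pairwise non-conjugacy will follow by tracking the trace of a fixed element of $\pi_1(S_g)$ that crosses $\gamma$, which becomes a nonconstant function of $c$. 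I expect the hardest part to be the Galois-cohomological classification of $\mathds{Q}$-forms of $\mathbf{G}_2$ together with the explicit verification that integer bending parameters really produce Zariski-dense images rather than images trapped in a proper $\mathds{Q}$-algebraic subgroup.
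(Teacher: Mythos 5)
Your outline follows the paper's strategy closely: realize $\pi_1(S_g)$ as a cocompact arithmetic Fuchsian group, push it into $\textbf{G}_2(\mathds{Z})$ via the principal representation $\tau_7$, bend along a simple closed curve by integral elements of the centralizing torus, get Zariski-density because the only proper connected subgroup of $\textbf{G}_2$ containing a principal $\PSL(2)$ is that $\PSL(2)$ itself, and distinguish the bendings by the parameter. Two sub-steps differ harmlessly: for Zariski-density the paper invokes Guichard's classification (Theorem \ref{theoremGuichard}) together with Lemmas \ref{lemma11} and \ref{lemma13}, but explicitly remarks that the Dynkin-style argument you propose works just as well; for non-conjugacy the paper uses Lemma \ref{lemmaconjugation} (Schur's lemma forces $\B_0=\B_1$ whenever $\rho_{\B_0}$ and $\rho_{\B_1}$ are conjugate) where you propose a trace computation, which would also work. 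Your classification of all $\mathds{Q}$-forms of $\textbf{G}_2$ is not needed for this theorem (only the containment $\tau_7(\Gamma_{a,b})<\textbf{G}_2(\mathds{Z})$ up to conjugation is used), and the worry about a parabolic Zariski closure is vacuous, since the closure contains the irreducible $\tau_7(\PSL(2,\mathds{R}))$ and is therefore reductive.

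The genuine gap is the production of the bending element. Dirichlet's unit theorem for tori gives that $Z(\mathds{Z})$ is infinite, but that is not enough: the one-dimensional subtorus $Z\cap\tau_7(\PGL(2,\mathds{R}))$ already has infinitely many integral points (the powers of $\tau_7(\D)$, where $\D$ generates the diagonal part of $j(\pi_1(S_g))$), so infinitude of $Z(\mathds{Z})$ is perfectly compatible with every integral element being trapped in $\tau_7(\PGL(2,\mathds{R}))$ — in which case every $\rho_c$ would have Zariski closure $\tau_7(\PSL(2,\mathds{R}))$ and the construction would fail. This is precisely what the paper's Galois-cohomology computation is for: Lemma \ref{lemma20} produces the simple closed curve with diagonal image, Lemma \ref{diagonalelements} determines the integral diagonal elements of the ambient arithmetic group explicitly ($w_i\in\mathds{Z}[\sqrt{a}]$ with $\sigma(w_i)=w_{n-i+1}$), and from this one writes down $\B_0=\Diag(\omega^2,\omega^2,1,1,1,\sigma(\omega)^2,\sigma(\omega)^2)$, checks that it lies in $\textbf{G}_2(\mathds{Z})$, has positive entries (so $\rho_{\B_0}$ stays in the Hitchin component), and is not of the form $\Diag(s^6,s^5t,\dots,t^6)$, hence not in $\tau_7(\PGL(2,\mathds{R}))$. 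You correctly flag this as the hard part, but your proposal does not supply the argument; without it the proof is incomplete.
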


Here $\textbf{G}_2(\mathds{R})$ is the adjoint connected split real Lie group of type $\G_2$ (see Definition \ref{definitionG2}). It appears that $\textbf{G}_2(\mathds{Z})$ is the only non-uniform lattice of $\textbf{G}_2(\mathds{R})$ up to commensurability and conjugation (see Remark \ref{remarkG2lattice}). 

Proposition \ref{proposition2} shows that for $n$ odd, the list of $\mathds{Q}$-arithmetic subgroups of $\SL(n,\mathds{R})$ that contain the group $\tau_n(\Gamma)$ up to commensurability is the same for all $\Gamma$. For $n$ even, the list depends on the group $\Gamma$. To be more precise we need to define more concepts.

The $\mathds{Q}$-arithmetic subgroups of $\SL(2,\mathds{R})$ can be described in terms of \emph{quaternion algebras}. We say that a unital associative algebra $A$ over a field $k$ is a quaternion algebra if there exist $i$ and $j$ in $A$ such that $(1,i,j,ij)$ is a basis of $A$ with $i^2=a\in k^{\times}$, $j^2=b\in k^{\times}$ and $ij=-ji$. In this case, we denote $A$ by $(a,b)_k$. Note that $(a,b)_k\simeq(b,a)_k\simeq(a,-ab)_k$ and if $x\in k^{\times}$ then $(ax^2,b)_k\simeq(a,b)_k$. We say that $A$ is a \emph{division algebra} if every non-zero element of $A$ is invertible. Quaternion algebras are endowed with an involution called the \emph{conjugation} $$A\rightarrow A,\ x=x_0+x_1i+x_2j+x_3ij\mapsto\overline{x}=x_0-x_1i-x_2j-x_3ij,$$ which allows us to define a norm $\Nred:A\to k,\ x\mapsto x\overline{x}$. Suppose that $k$ is a number field and denote by $\mathcal{O}_k$ its ring of integers. An \emph{order} of $A$ is a finitely generated $\mathcal{O}_{k}$-submodule containing a basis of $A$ over $k$, containing $1$ and which is a subring of $A$. For example if $a,b\in\mathcal{O}_{k}$ then $\mathcal{O}=\mathcal{O}_{k}[1,i,j,ij]$ is an order of $A$.

We say that $A$ \emph{splits} over a field extension $K$ if $A\otimes_{k}K\simeq\M_2(K)$. For instance, a quaternion algebra over $\mathds{Q}$ splits over $\mathds{R}$ if and only if $a$ or $b$ is positive. The $\mathds{R}/\mathds{Q}$-forms of $\SL_2$ are algebraic groups defined by $$\HH(K)=\{x\in A\otimes_{\mathds{Q}}K|\Nred(x)=1\}$$ for any field extension $K$ of $\mathds{Q}$, with $A$ a quaternion algebra over $\mathds{Q}$ that splits over $\mathds{R}$.  Its integer points are commensurable with $\mathcal{O}^1=\{x\in\mathcal{O}|\Nred(x)=1\}$ for $\mathcal{O}$ an order of $A$. We can embed $\mathcal{O}^1$ in $\SL(2,\mathds{R})$ using
$$\mathcal{O}^1\hookrightarrow A^1\hookrightarrow (A\otimes_{\mathds{Q}}\mathds{R})^1\simeq\SL(2,\mathds{R})$$
and the image will be commensurable with $\Gamma_{a,b}$, defined above. Thus $\mathds{Q}$-arithmetic subgroups of $\SL(2,\mathds{R})$ are subgroups commensurable to conjugates of $\mathcal{O}^1$, for $\mathcal{O}$ an order of a quaternion algebra $A$ that splits over $\mathds{R}$. Furthermore $\mathcal{O}^1$ is cocompact if and only if $A$ is a quaternion division algebra, which is equivalent to $ax^2+by^2=1$ having no solutions $(x,y)\in\mathds{Q}^2$.

We can now state our result for $n$ even. If $A$ splits over $\mathds{R}$, $\M_{\frac{n}{2}}(\mathcal{O})$ embeds in $\M_n(\mathds{R})$. This allows us to define $\SL(\frac{n}{2},\mathcal{O})$ as the matrices of $\M_{\frac{n}{2}}(\mathcal{O})$ which have determinant $1$ in $\M_n(\mathds{R})$. For $\sigma\in\Gal(\mathds{Q}(\sqrt{d})/\mathds{Q})$ define $$ \SU(\I_{\frac{n}{2}},\overline{\phantom{s}}\otimes\sigma;\mathcal{O}\otimes\mathds{Z}[\sqrt{d}])=\{\M\in\SL(\text{\tiny{\( \frac{n}{2}\)}},\mathcal{O}\otimes_{\mathds{Z}}\mathds{Z}[\sqrt{d}])|\partial(\M)^\top\M=\I_{\frac{n}{2}}\} $$ where $\partial:A\otimes_{\mathds{Q}}\mathds{Q}(\sqrt{d})\to A\otimes_\mathds{Q}\mathds{Q}(\sqrt{d}),\ x\otimes t\mapsto\overline{x}\otimes\sigma(t)$ and $\partial(\M)$ consists of applying $\partial$ to all entries of $\M$.
Below is a description of $\mathds{Q}$-arithmetic subgroups of $\SL(n,\mathds{R})$ that contain $\tau_n(\Gamma_{a,b})$.

\begin{propx}
\label{proposition2.1}
    Let $\Gamma$ be a $\mathds{Q}$-arithmetic subgroup of $\SL(2,\mathds{R})$ and $n\geq4$ be even. Let $\mathcal{O}$ be an order of a quaternion algebra $A$ over $\mathds{Q}$ such that $\Gamma$ is commensurable with $\mathcal{O}^1$.
    
    Then $\tau_n(\Gamma)$ lies in a subgroup of $\SL(n,\mathds{R})$ commensurable with a conjugate of
    \begin{itemize}
        \item $\SL(\frac{n}{2},\mathcal{O})$,
        \item $\SU(\I_n,\sigma;\mathds{Z}[\sqrt{d}])$ for any $d\in\mathds{N}$ not a square such that $A$ splits over $\mathds{Q}(\sqrt{d})$ with $\sigma\in\Gal(\mathds{Q}(\sqrt{d})/\mathds{Q})$ non-trivial,
        \item $\SU(\I_{\frac{n}{2}},\overline{\phantom{s}}\otimes\sigma;\mathcal{O}\otimes\mathds{Z}[\sqrt{d}])$ for any $d\in\mathds{N}$ not a square such that $A$ does not split over $\mathds{Q}(\sqrt{d})$ with $\sigma\in\Gal(\mathds{Q}(\sqrt{d})/\mathds{Q})$ non-trivial.
    \end{itemize}
    
    Furthermore these are the only $\mathds{Q}$-arithmetic subgroups of $\SL(n,\mathds{R})$ that contain $\tau_n(\Gamma)$ up to commensurability.
\end{propx}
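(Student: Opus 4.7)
The plan is to translate the statement into non-abelian Galois cohomology. Write $\HH$ for the $\mathds{Q}$-algebraic group of reduced-norm-one elements of $A$, so that $\HH(\mathds{R})\simeq\SL(2,\mathds{R})$ and its integer points are commensurable to $\Gamma$. Up to conjugation, any $\mathds{Q}$-arithmetic subgroup of $\SL(n,\mathds{R})$ containing $\tau_n(\Gamma)$ up to commensurability is the group of integer points of a pair $(\G,\iota)$, where $\G$ is a $\mathds{Q}$-form of $\SL_n$ and $\iota\colon\HH\to\G$ is a $\mathds{Q}$-morphism whose base change to $\mathds{R}$ is conjugate to $\tau_n$. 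The goal is thus to enumerate such pairs.

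The form $\HH$ corresponds to a cocycle $c\in Z^1(\Gal_\mathds{Q},\Aut(\SL_{2,\overline{\mathds{Q}}}))$ and $\G$ to a cocycle $c'\in Z^1(\Gal_\mathds{Q},\Aut(\SL_{n,\overline{\mathds{Q}}}))$. Since $\tau_n$ is defined over $\mathds{Q}$, the descent of $\iota$ forces $c'_\sigma\cdot\tau_n(c_\sigma)^{-1}$ to lie in the centralizer $Z := Z_{\Aut(\SL_n)}(\tau_n(\SL_2))$ for every $\sigma$. By Schur's lemma applied to the irreducible representation $\tau_n$, the inner part of $Z$ is trivial; the contragredient outer automorphism $\theta\colon x\mapsto(x^\top)^{-1}$ acts on $\tau_n(\SL_2)$ as conjugation by $J_n$, the matrix of the invariant alternating form (here $n$ is even), so $Z$ is cyclic of order two, generated by $(J_n^{-1},\theta)$. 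The Galois action on $Z$ is trivial and two pairs $(\G,\iota)$ are isomorphic precisely when their cocycles differ by a $Z$-coboundary, so the set of pairs compatible with $c$ is in bijection with $H^1(\Gal_\mathds{Q},\mathds{Z}/2)=\mathds{Q}^\times/(\mathds{Q}^\times)^2$. Restricting to those $\G$ that are $\mathds{R}$-isomorphic to $\SL_n$ selects the untwisted class and the classes of positive non-square $d\in\mathds{N}$.

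It remains to identify the $\mathds{Q}$-group attached to each class. The untwisted cocycle $\tau_n(c)$ lies in $\PGL_n$ and produces the inner form of $\SL_n$ whose associated central simple algebra has the same Brauer class as $A$, namely $\M_{n/2}(A)$; its integer points are commensurable to $\SL(n/2,\mathcal{O})$. A non-trivial quadratic twist by $d$ gives an outer form, i.e.\ a unitary group; its base change to $E := \mathds{Q}(\sqrt{d})$ is the inner form of $\SL_{n,E}$ associated to $\M_{n/2}(A\otimes E)$, which is split precisely when $A$ splits over $E$ (yielding the second item) and remains quaternionic otherwise (yielding the third). A local-global analysis using the Hasse principle for Hermitian forms together with the real signature forced by $\tau_n$ identifies the underlying Hermitian form as the identity, so one recovers $\SU(\I_n,\sigma;\mathds{Z}[\sqrt{d}])$ or $\SU(\I_{n/2},\overline{\phantom{s}}\otimes\sigma;\mathcal{O}\otimes\mathds{Z}[\sqrt{d}])$, respectively.

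The main technical obstacle will be this last identification of the Hermitian form: a priori the twist is only \emph{some} unitary group, and pinning down the identity form requires computing the local invariants of the twisting cocycle at every finite place and comparing them with those of the identity form. The opposite direction, namely that each listed arithmetic group indeed contains a conjugate of $\tau_n(\Gamma)$ up to commensurability, is established separately by exhibiting explicit embeddings $\mathcal{O}^1\hookrightarrow\Lambda$ realizing $\tau_n$ over $\mathds{R}$ and verifying the integrality of the image.
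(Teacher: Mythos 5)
Your proposal follows essentially the same route as the paper: classify the cocycles compatible with $\tau_n$ using Schur's lemma and the order-two centralizer generated by $\Int(\J_n^{-1})\circ\omega$ (this is the paper's Proposition \ref{proposition1}, repackaged as $\HH^1(\Gal(\overline{\mathds{Q}}/\mathds{Q}),\mathds{Z}/2)$), then identify the resulting $\mathds{Q}$-forms through their Brauer class and the classification of Hermitian forms (Proposition \ref{proposition23}). Two comments. First, the step you present as setup but which actually carries the weight of the ``these are the only ones'' direction is the claim that any $\mathds{Q}$-arithmetic $\Lambda$ containing $\tau_n(\Gamma)$ arises from a pair $(\G,\iota)$ with $\iota$ defined over $\mathds{Q}$: the hypothesis is only a containment of discrete groups, and one must argue that $\tau_n$ intertwines the two descent data. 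The paper does this by comparing $\tau_n$ with its twists $\zeta(\sigma)\circ\sigma\circ\tau_n\circ(\xi(\sigma)\circ\sigma)^{-1}$, which agree with $\tau_n$ on a finite-index subgroup of the arithmetic group and hence everywhere by Zariski density; your write-up needs this (or an equivalent argument that the graph of $\iota$ is defined over $\mathds{Q}$), and similarly a one-line computation (the coboundary of $\sigma\mapsto\Det(\M_\sigma)$) to see that the Brauer class of the untwisted form is $[A]^{n-1}=[A]$. Second, you overestimate the final identification: for the involutions at hand, Hermitian forms over $\mathds{Q}(\sqrt{d})$ with $d>0$, and over $A\otimes_{\mathds{Q}}\mathds{Q}(\sqrt{d})$, are classified by rank and discriminant alone (Propositions \ref{propHermitianQd} and the quaternionic analogue from Lewis), so the paper only computes the discriminant of the explicit form $\J_n\tau_n(\T^{a,b}_{\sigma})^{-1}$ rather than local invariants at every place; likewise the forward direction needs no explicit integral embeddings, since once $\iota$ is a $\mathds{Q}$-morphism the image of an arithmetic subgroup lies in an arithmetic subgroup by general functoriality.
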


For most arithmetic groups above, we manage to bend the representation $\tau_n$ to make it thin.

\begin{thmx}
\label{theorem2}
Let $n\geq4$ be even. For the following $\mathds{Q}$-arithmetic subgroups $\Lambda$ of $\SL(n,\mathds{R})$, there exists $g\geq2$ such that $\Lambda$ contains infinitely many $\MCG(S_g)$-orbits of thin Hitchin representations of $\pi_1(S_g)$:
\begin{itemize}
        \item $\SU(\I_n,\sigma;\mathds{Z}[\sqrt{d}])$ for any $d\in\mathds{N}$ not a square with $\sigma\in\Gal(\mathds{Q}(\sqrt{d})/\mathds{Q})$ non-trivial
        \item $\SU(\I_{\frac{n}{2}},\overline{\phantom{s}}\otimes\sigma;\mathcal{O}\otimes\mathds{Z}[\sqrt{d}])$ for any $d\in\mathds{N}$ not a square and $\mathcal{O}$ an order of a quaternion algebra over $\mathds{Q}$ that does not split over $\mathds{Q}(\sqrt{d})$ with $\sigma\in\Gal(\mathds{Q}(\sqrt{d})/\mathds{Q})$ non-trivial.
    \end{itemize}
\end{thmx}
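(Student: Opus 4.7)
The plan is to adapt to even $n$ the bending strategy outlined in the introduction for $n$ odd, using Proposition \ref{proposition2.1} to place a Fuchsian Hitchin representation inside $\Lambda$ and then deforming it via the Johnson--Millson construction.

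\textbf{Step 1 (Fuchsian representation inside $\Lambda$).} For $\Lambda=\SU(\I_n,\sigma;\mathds{Z}[\sqrt{d}])$ I take any $a,b\in\mathds{N}^*$ with $(a,b)_{\mathds{Q}}$ a quaternion division algebra split over $\mathds{R}$, so that $\Gamma_{a,b}$ is cocompact. For $\Lambda=\SU(\I_{n/2},\overline{\phantom{s}}\otimes\sigma;\mathcal{O}\otimes\mathds{Z}[\sqrt{d}])$ I instead take $a,b$ with $(a,b)_{\mathds{Q}}\simeq A$, the algebra in which $\mathcal{O}$ is an order (which by hypothesis does not split over $\mathds{Q}(\sqrt{d})$). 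In both cases I pass to a torsion-free finite-index subgroup $\Gamma\leq\Gamma_{a,b}$, identified via a Fuchsian isomorphism $j$ with $\pi_1(S_g)$ for some $g\geq 2$. Proposition \ref{proposition2.1} then gives, after a further finite-index restriction absorbed into $\Gamma$, that $\rho_0:=\tau_n\circ j$ takes values in a conjugate of $\Lambda$.

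\textbf{Step 2 (bending).} Pick a non-separating simple closed curve $\gamma$ on $S_g$, yielding an HNN decomposition $\pi_1(S_g)\simeq\pi_1(S')\ast_{\langle\gamma\rangle}$ with stable letter $t$. For any $c$ centralizing $\rho_0(\gamma)$, let $\rho_c$ agree with $\rho_0$ on $\pi_1(S')$ and send $t$ to $c\cdot\rho_0(t)$; this is a well-defined homomorphism, and if $c\in\Lambda$ its image lies in $\Lambda$. Because the Hitchin component is open in the character variety and contains $[\rho_0]$, the representation $\rho_c$ remains Hitchin for $c$ close to the identity in a connected one-parameter subgroup of the centralizer of $\rho_0(\gamma)$ in $\SL(n,\mathds{R})$; by Labourie and Fock--Goncharov it is then discrete and faithful. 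For generic such $c$ the image is Zariski-dense in $\SL(n,\mathds{R})$, using the classification of Zariski closures of Hitchin representations to rule out every proper closed intermediate subgroup (in particular $\tau_n(\SL(2,\mathds{R}))$ and, when $n$ is even, the symplectic group), and the image has infinite index in $\Lambda$ by the virtual cohomological dimension argument from the introduction. So $\rho_c$ is thin.

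\textbf{Step 3 (centralizer computation --- the main obstacle).} The whole difficulty is to exhibit enough integer centralizing elements. The Zariski closure of $\rho_0(\gamma)$ in $\SL_n$ is a one-dimensional torus, and since $j(\gamma)$ hyperbolic makes the eigenvalues of $\tau_n(j(\gamma))$ distinct, its centralizer is a maximal torus $\mathbf{T}\leq\SL_n$. Through the $\mathds{Q}$-structure of $\Lambda$, $\mathbf{T}$ inherits a $\mathds{Q}$-form which I would determine explicitly via the same nonabelian Galois cohomology that underlies Proposition \ref{proposition2.1}. For the first family the resulting $\mathds{Q}$-form of $\mathbf{T}$ decomposes as a product of norm-one tori attached to the quadratic extensions $\mathds{Q}(\sqrt{d},\lambda)$, where $\lambda$ ranges over the eigenvalues of $\tau_n(j(\gamma))$; generic choice of $\gamma$ ensures that this $\mathds{Q}$-torus has positive $\mathds{Q}$-rank. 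For the second family the twisted structure of $\mathcal{O}$ produces an analogous decomposition involving the quadratic extensions over which $A$ splits locally, and positive $\mathds{Q}$-rank is again arranged generically. Borel--Harish-Chandra then yields an infinite discrete subgroup of $\mathbf{T}(\mathds{R})\cap\Lambda$ contained in a connected one-parameter real subgroup of $\mathbf{T}(\mathds{R})$, which is exactly what the bending of Step 2 requires.

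\textbf{Step 4 (infinitely many $\MCG$-orbits).} The construction of Step 2 with $c$ varying in the one-parameter subgroup produced in Step 3 gives an uncountable family of bent Hitchin representations $\rho_c$. The trace of $\rho_c(\alpha)$ for a suitable fixed $\alpha\in\pi_1(S_g)$ varies analytically and non-constantly with $c$, so the characters $[\rho_c]$ are pairwise distinct on an uncountable subset of parameters. Since $\MCG(S_g)$ is countable, each of its orbits in the character variety is countable, and therefore our uncountable family meets infinitely (in fact uncountably) many $\MCG(S_g)$-orbits, yielding the two claimed statements of the theorem.
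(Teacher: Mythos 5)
Your Steps 1--3 follow the paper's strategy in outline (the paper carries out your Step 3 explicitly: it chooses the curve $\gamma$ so that $j(\gamma)$ is diagonal in the quaternionic model, computes the diagonal elements of $\prescript{}{\zeta}{\SL_n(\mathds{Z})}$ via the compatible cocycle, and writes down explicit bending matrices built from fundamental units with positive entries), but there are genuine gaps. In Step 1 you cannot take \emph{any} $a,b$ for the target $\SU(\I_n,\sigma;\mathds{Z}[\sqrt{d}])$: by Proposition \ref{proposition2.1} the arithmetic group you land in is $\SU(\I_{\frac{n}{2}},\overline{\phantom{s}}\otimes\sigma;\mathcal{O}\otimes\mathds{Z}[\sqrt{d}])$ unless $(a,b)_{\mathds{Q}}$ splits over $\mathds{Q}(\sqrt{d})$, so the quaternion algebra must be chosen in terms of $d$ (the paper takes $\sqrt{a}\in\mathds{Q}(\sqrt{d})$). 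In Step 3 the decisive verification is missing: you need integral points of the torus that have positive entries \emph{and} violate $\B^\top\J_n\B=\J_n$, since for $n$ even the symplectic condition on a diagonal matrix ($w_iw_{n+1-i}=1$) could a priori be satisfied by the whole unit group; positive $\mathds{Q}$-rank of $\mathbf{T}$ alone does not rule this out, which is why the paper exhibits $\B_0$ explicitly.

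The fatal gap is Step 4. The representations $\rho_c$ are \emph{thin}, i.e.\ have image in $\Lambda$, only for $c$ in the discrete, hence countable, set $\mathbf{T}(\mathds{R})\cap\Lambda$; the uncountable one-parameter family you invoke consists almost entirely of representations that do not take values in $\Lambda$ and are irrelevant to the statement. A countable family of pairwise non-conjugate thin representations could perfectly well lie in a single (countable) $\MCG(S_g)$-orbit, so the cardinality argument proves nothing. This is exactly why the paper needs a different mechanism: the trace set $\Tr(\rho)$ is an invariant of the $\MCG$-orbit, strong approximation (Theorem \ref{Strongapproximation} with Propositions \ref{propositionSUZ} and \ref{propositionSUO}) forces a Zariski-dense thin surface subgroup to have full trace reduction modulo almost every prime, and one then derives a contradiction by choosing a prime $p$ at which the map $x\mapsto\PP(x)$ (where $\Tr\circ\tau_n=\PP\circ\Tr$) is not surjective on $\mathds{F}_p$ together with a power $\B^k$ that is trivial modulo $p$ yet still bends the representation off the symplectic group. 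Without an argument of this kind, or some other genuine $\MCG$-invariant separating the countably many bent representations, the ``infinitely many $\MCG(S_g)$-orbits'' claim --- which is the substance of the theorem --- remains unproved.
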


Note that the subgroup $\SL(4,\mathds{Z})$ is not part of our list but has been shown to contain thin Hitchin representations by Long and Thistlethwaite \cite{Long_ZariskidensesurfaceSL4Z}.

\begin{remark}
    Using the technique presented in \cite{Long_ZariskidensesurfaceSL2k+1Z} and \cite{Zshornack_IntegralZariskidensesurfacegroups}, one can show that $\SL(\frac{n}{2},\mathcal{O})$ also contains thin Hitchin representations, for $\mathcal{O}$ an order of any non-split quaternion algebra over $\mathds{Q}$. However, the construction is slightly different from the one presented here.
\end{remark}

We also construct thin Hitchin representations in $\mathds{Q}$-arithmetic subgroups of $\Sp(n,\mathds{R})$.

\begin{thmx}
\label{theorem5}
Let $n\geq4$ be even. Let $\mathcal{O}$ be an order of a division quaternion algebra over $\mathds{Q}$ with conjugation $\overline{\phantom{s}}$. There exists $g\geq2$ such that the $\mathds{Q}$-arithmetic subgroup $\SU(\I_{\frac{n}{2}},\overline{\phantom{s}};\mathcal{O})$ of $\Sp(n,\mathds{R})$ contains infinitely many $\MCG(S_g)$-orbits of thin Hitchin representations of $\pi_1(S_g)$.
\end{thmx}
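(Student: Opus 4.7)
The plan is to follow the template of Theorems \ref{theorem1}, \ref{theorem2}, and \ref{theorem3}: first establish an analogue of Propositions \ref{proposition2} and \ref{proposition2.1} in the symplectic setting, identifying $\SU(\I_{\frac{n}{2}},\overline{\phantom{s}};\mathcal{O})$ as one of the $\QQ$-arithmetic subgroups of $\Sp(n,\R)$ containing $\tau_n(\mathcal{O}^1)$, and then bend a Hitchin representation that lands inside it to produce infinitely many Zariski-dense ones.

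For the embedding step, since $n$ is even the irreducible representation $\tau_n$ preserves a non-degenerate alternating form on $\R^n$, so $\tau_n(\SL(2,\R))\subset\Sp(n,\R)$. The quaternion algebra $A$ underlying $\mathcal{O}$ is assumed to be division, hence $\mathcal{O}^1$ is cocompact in $\SL(2,\R)$ and $A$ splits over $\R$. Its standard involution $\overline{\phantom{s}}$ is of symplectic type among the involutions of the first kind, so $\SU(\I_{\frac{n}{2}},\overline{\phantom{s}};\mathcal{O})$ is commensurable with the integer points of a $\QQ$-form of $\Sp_n$. Realising $\tau_n$ on the symmetric powers of the standard $\SL_2$-representation and equipping these with a compatible $A$-hermitian structure, I would show that $\tau_n(\mathcal{O}^1)$ lies in $\SU(\I_{\frac{n}{2}},\overline{\phantom{s}};\mathcal{O})$ up to commensurability. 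A nonabelian Galois cohomology argument parallel to the proofs of Propositions \ref{proposition2} and \ref{proposition2.1}, controlled by the centralizer of $\tau_n(\SL_2)$ in $\Sp_n$ (which is $\mu_2$ by Schur), would then give the full list of $\QQ$-arithmetic subgroups of $\Sp(n,\R)$ containing $\tau_n(\mathcal{O}^1)$.

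Next I would bend. Fix $g\geq2$ and an isomorphism $j$ of $\pi_1(S_g)$ onto a torsion-free finite-index subgroup of $\mathcal{O}^1$, so that $\rho_0=\tau_n\circ j$ is a Hitchin representation with image in $\SU(\I_{\frac{n}{2}},\overline{\phantom{s}};\mathcal{O})$. Pick a simple closed curve $\gamma$ giving a splitting $\pi_1(S_g)=A_1\ast_{\langle\gamma\rangle}A_2$ (or an HNN decomposition), and compute the centralizer of $\tau_n(j(\gamma))$ in $\SU(\I_{\frac{n}{2}},\overline{\phantom{s}};\mathcal{O})$. This centralizer is the group of integer points of a maximal $\QQ$-torus of $\Sp_n$; its structure is controlled by the unit groups of orders in $\mathcal{O}\otimes_{\mathds{Z}}\mathcal{O}_K$, where $K$ is generated by the eigenvalues of $\tau_n(j(\gamma))$. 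A Dirichlet-type argument yields an infinite-order noncentral element $c$ for a suitable choice of $\gamma$. Bending by powers of $c$ then produces a family $\{\rho_k\}_{k\in\mathds{Z}}$ of representations lying in the Hitchin component with image in $\SU(\I_{\frac{n}{2}},\overline{\phantom{s}};\mathcal{O})$.

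The main obstacle will be twofold: verifying that the centralizer furnished by the Dirichlet computation is actually noncentral and of positive rank inside the quaternionic lattice, and ensuring that bending by $c^k$ produces Zariski-dense images in $\Sp_n$ rather than ones trapped in a proper reductive subgroup. The first is an explicit rank computation involving the quaternionic order and the field $K$; the second follows from the standard rigidity argument that the Zariski closure of $\rho_k(\pi_1(S_g))$ is a reductive subgroup containing both $\tau_n(\SL_2)$ and $c$, and no proper reductive subgroup of $\Sp_n$ satisfies both constraints. Once Zariski density is known, faithfulness and discreteness are automatic from Labourie's theorem, infinite index follows from \cite{Aramayona_GeometricDimensionLattices}, and separating $\MCG(S_g)$-orbits via trace invariants that grow unboundedly with $k$ closes the proof exactly as in Theorem \ref{theorem2}.
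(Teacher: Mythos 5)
Your outline follows the same route as the paper's proof: classify the $\QQ$-arithmetic subgroups of $\Sp(n,\R)$ containing $\tau_n(\mathcal{O}^1)$ by nonabelian Galois cohomology (this is Lemma \ref{Qformssymplectic} and the proposition after it), bend along a simple closed curve whose image under $j$ is diagonal, take the bending element from the unit group via Dirichlet (Lemmas \ref{lemma20} and \ref{diagonalelements}, with $\B_0=\Diag(\omega^2,\dots,\omega^2,\sigma(\omega)^2,\dots,\sigma(\omega)^2)$), and certify Zariski density from the classification of Zariski closures of Hitchin representations (Theorem \ref{theoremGuichard} plus Lemma \ref{lemma11}: for even $n$ the only proper possibility is $\tau_n(\PGL(2,\R))$, so it suffices that $\B\notin\tau_n(\PGL(2,\R))$). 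Two caveats on that part: in the separating case the Zariski closure contains $\tau_n(\PSL(2,\R))$ and $\B\tau_n(\PSL(2,\R))\B^{-1}$ but not $\B$ itself, so the precise criterion is the one of Lemma \ref{lemma11}; and to invoke Guichard and Labourie you must first know $\rho_{\B^k}$ is still Hitchin, which the paper gets by taking $\B$ with positive entries so that it lies in the identity component of the centralizer of $\rho(\gamma)$ and $\rho_{\B}$ is joined to $\rho$ by a path of representations (one also lifts from $\PSp(n,\R)$ to $\Sp(n,\R)$ at the end).

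The genuine gap is the last step. A ``trace invariant that grows unboundedly with $k$'' separates at best conjugacy classes; it is not an invariant of the $\MCG(S_g)$-orbit, because precomposing by a mapping class permutes which group element carries which trace while leaving the image subgroup, hence the trace \emph{set} $\Tr(\rho_k(\pi_1(S_g)))$, unchanged. The paper's mechanism is different: if there were only finitely many orbits, then by strong approximation (Theorem \ref{Strongapproximation} and Proposition \ref{propositionSp}) the reduction of $\Tr(\rho_{\B^k})$ modulo $p$ would equal $\mathds{F}_p$ for all $p$ outside one fixed finite set $\mathcal{V}$; but $\Tr(\rho)=\PP(\Tr(j))$ for a polynomial $\PP\in\mathds{Z}[\X]$ that fails to be surjective on $\mathds{F}_p$ for infinitely many $p$, and choosing $p\notin\mathcal{V}$ of this kind and $k$ with $\B^k\equiv\I\ [p]$ gives $\Tr(\rho_{\B^k})\equiv\Tr(\rho)\ [p]$, which is not all of $\mathds{F}_p$ --- a contradiction. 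You need this reduction-mod-$p$ argument (or an equivalent orbit invariant); unbounded growth of traces does not by itself yield infinitely many $\MCG(S_g)$-orbits.
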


Together with Proposition \ref{propositionsymplectic} below, the theorem implies the following.

\begin{corox}
\label{corollary2}
Let $n\geq 4$ be even. For every non-uniform lattice $\Lambda$ of $\Sp(n,\mathds{R})$ not commensurable with a conjugate of $\Sp(n,\mathds{Z})$, there exists $g\geq2$ such that $\Lambda$ contains infinitely many $\MCG(S_g)$-orbits of thin Hitchin representations of $\pi_1(S_g)$.
\end{corox}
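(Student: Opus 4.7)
The plan is to combine Theorem~\ref{theorem5} with Proposition~\ref{propositionsymplectic}, which in analogy with Propositions~\ref{proposition2} and~\ref{proposition2.1} should classify the non-uniform $\mathds{Q}$-arithmetic lattices of $\Sp(n,\mathds{R})$ up to commensurability. I expect this proposition to assert that every such lattice is, after conjugation, commensurable either with $\Sp(n,\mathds{Z})$ or with $\SU(\I_{n/2},\overline{\phantom{s}};\mathcal{O})$ for some order $\mathcal{O}$ of a division quaternion algebra over $\mathds{Q}$ that splits over $\mathds{R}$. Granting this, a non-uniform lattice $\Lambda$ of $\Sp(n,\mathds{R})$ not commensurable with a conjugate of $\Sp(n,\mathds{Z})$ may be assumed, after conjugation, commensurable with some $\Lambda' := \SU(\I_{n/2},\overline{\phantom{s}};\mathcal{O})$, and Theorem~\ref{theorem5} then supplies a genus $g\geq 2$ and infinitely many $\MCG(S_g)$-orbits of thin Hitchin representations of $\pi_1(S_g)$ with image in $\Lambda'$.

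It remains to transfer these representations into $\Lambda$. Set $\Lambda_0 := \Lambda \cap \Lambda'$, a finite-index subgroup of both. My plan is to re-run the bending construction of Theorem~\ref{theorem5} inside $\Lambda_0$ rather than inside $\Lambda'$. The Fuchsian representation $\tau_n \circ j$ takes values in $\Lambda'$; restricting to the finite-index subgroup $\Gamma_0 := (\tau_n \circ j)^{-1}(\Lambda_0) \leq \pi_1(S_g)$, which corresponds topologically to a finite cover $S_{g'} \to S_g$, we obtain a representation with image in $\Lambda_0 \subset \Lambda$. The centralizer in $\Lambda_0$ of the image of a chosen simple closed curve has finite index in its centralizer in $\Lambda'$, so it still provides an infinite supply of bending elements lying in $\Lambda$. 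Bending along a lift of the curve with these elements produces representations of $\pi_1(S_{g'})$ with image in $\Lambda$; because the deformations remain in the Hitchin component they are automatically discrete and faithful, and a small perturbation of the bending parameter ensures Zariski-density, hence thinness.

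The main obstacle will be ensuring that this infinite bending family descends to infinitely many $\MCG(S_{g'})$-orbits rather than only to infinitely many points of the character variety, since a priori distinct $\MCG(S_g)$-orbits could merge under restriction (the image of $\MCG(S_g)$ in $\MCG(S_{g'})$ need not have finite index). I expect the mapping-class-invariant data used to separate orbits in the proof of Theorem~\ref{theorem5}---typically trace or Jordan-projection invariants of a fixed loop varying monotonically with the bending parameter---to pull back to $S_{g'}$-invariants that continue to distinguish the restricted orbits, completing the argument.
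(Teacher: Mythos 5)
Your proposal follows the paper's route exactly: the paper's proof of this corollary is the one-line observation that it is a consequence of Theorem~\ref{theorem5} together with Proposition~\ref{propositionsymplectic}, and the commensurability-transfer step you spell out (passing to $\Lambda\cap\Lambda'$ and the corresponding finite cover of the surface, then bending with elements of the finite-index centralizer) is precisely what that one-liner leaves implicit. The obstacle you flag at the end is not a real one: the invariant separating orbits is the reduction modulo $p$ of the trace set $\Tr(\rho(\pi_1(S_{g'})))$, and since a finite-index subgroup of a Zariski-dense group is still Zariski-dense, the strong-approximation argument applies verbatim to the restricted representations, while for the bendings that are trivial modulo $p$ the restricted trace set still lands in $\PP(\mathds{F}_p)\subsetneq\mathds{F}_p$, so infinitely many $\MCG(S_{g'})$-orbits survive.
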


Again, the group $\Sp(4,\mathds{Z})$ is not part of our list but has been shown to contain thin Hitchin representations in \cite{Long_ZariskidensesurfaceSL4Z}. Together with \cite{Long_ZariskidensesurfaceSL4Z}, the corollary shows that every non-uniform lattice of $\Sp(4,\mathds{R})$ contains a thin Hitchin representation.\\

Constructing surface subgroups in lattices of Lie groups (not necessarily Zariski-dense) has been done for uniform lattices of many semisimple Lie groups. In \cite{Hamenstadt_Incompressiblesurfaceslocallysymmetricspaces}, Hamenstädt proved that any cocompact lattice of a rank one simple Lie group of non-compact type not isomorphic to $\SO(2n,1)$ contains surface subgroups, generalizing the work of Kahn and Markovic \cite{Kahn_Immersingsurfacesinhyerbolicthreemanifold}. Kahn, Labourie and Mozes \cite{Kahn_Surfacegroupsinuniformlattices} proved the same result for many other semi-simple Lie groups, notably the complex ones. It is expected that their construction provides Zariski-dense surface groups. As it is explained in \S1.2 of \cite{Kahn_Surfacegroupsinuniformlattices} their proof does not work in setting we are dealing with, namely split real Lie groups.\\

\noindent
\textbf{Organization of the paper.} In Section 1 we give some background on $\mathds{Q}$-arithmetic subgroups in semi-simple Lie groups and describe their classification for the relevant Lie groups. In Section 2 we consider the relations between the irreducible representation $\tau_n$ and the $\mathds{R}/\mathds{Q}$-forms of $\SL_n$. We prove there Propositions $\A$. In Section 3 we give the corresponding results for $\Sp(2n,\mathds{R})$, $\SO(\I_{k+1,k},\mathds{R})$ and $\textbf{G}_2(\mathds{R})$. In Section 4 we describe the ``bending" construction. Finally in Section 5 we give the proof of Theorems $\A$.\\

\noindent
\textbf{Acknowledgement.} The author would like to say that this paper would have not been possible without the monitoring of Andrés Sambarino. The author is especially grateful for all the enlightening and entertaining discussions and the numerous yet careful reading of this article. He would also like to thank Xenia Flamm for her remarks which made this article more understandable and Maxime Wolff for the time he spent discussing with us. The author is indebted to Marc Burger who recommended to look at his talk \cite{Burger_OnIntegerPointHitchin} from 2015. This talk inspired our proof on the existence of infinitely many mapping class group-orbits. Finally, the author thanks the referee for numerous helpful comments.

\section{Background on $\mathds{Q}$-arithmetic groups}
\label{sectionarithmeticgroup}

Let $k$ be an algebraic number field with ring of integers $\mathcal{O}_k$. A \emph{central simple algebra} over $k$ is a unital associative algebra $D$ which is simple and whose center is exactly $k$. Quaternion algebras are a particular case of central simple algebras. If $D\otimes_{k}\mathds{R}\simeq\M_d(\mathds{R})$, then the isomorphism $\phi:\M_n(D)\otimes_{k}\mathds{R}\xrightarrow{\sim}\M_{dn}(\mathds{R})$ induces a norm on $\M_n(D)$ given by
$\Nred:\M_n(D)\to k,\ \M\mapsto\Det(\phi(\M)).$
This norm is independent of $\phi$. We can thus define $$\SL(n,D)=\{\M\in\M_n(D)|\Nred(\M)=1\}.$$ If $\B\in\M_n(D)$ and $\partial:D\rightarrow D$ is an involution we define \begin{equation*}
    \SU(\B,\partial;D)=\{\M\in\SL(n,D)|\partial(\M^\top)\B\M=\B\}.
\end{equation*} Given a subring $R$ of $D$, let $\SL(n,R)=\SL(n,D)\cap\M_n(R)$ and $\SU(\B,\partial;R)=\SU(\B,\partial;D)\cap\M_n(R)$. These notations will be used to define $\mathds{Q}$-arithmetic subgroups of $\SL(n,\mathds{R})$.

An \emph{anti-involution} of $D$ is a map $\partial:D\rightarrow D$ such that for all $a,b\in D$ $$\partial(a+b)=\partial(a)+\partial(b),\ \partial(ab)=\partial(b)\partial(a)\ \text{and}\ \partial^2(a)=a.$$ For example, the conjugation $\overline{\phantom{s}}$ of a quaternion algebra is an anti-involution. An anti-involution induces an involution of $k$ that can be trivial or not. If $A$ is a quaternion algebra over $k$, with $L$ a quadratic extension of $k$ and $\sigma$ the non-trivial element of $\Gal(L/k)$, then $\overline{\phantom{s}}\otimes\sigma$ is an anti-involution on $A\otimes_{k}L$ which takes $x\otimes l$ to $\overline{x}\otimes\sigma(l)$ and thus induces a non-trivial involution of $L$.

We say that $D$ is a \emph{division algebra} if every non-zero element of $D$ is invertible. The \emph{degree} of a division algebra is the square root of its dimension. An \emph{order} of $D$ is a finitely generated $\mathcal{O}_{k}$-submodule of $D$ containing a basis of $D$ over $k$, which is a subring of $D$ and contains the unit element.

Let $\partial$ be an anti-involution on $D$ and let $f:D^n\times D^n\to D$ be a map satisfying
\begin{alignat*}{2}
f(\lambda u+v,w)&=\lambda f(u,w)+f(v,w)\\
f(u,\lambda v+w)&=\partial(\lambda)f(u,v)+f(u,w)\\ f(u,v)&=\partial(f(v,u))
\end{alignat*} for all $u,v,w\in D^n$ and $\lambda\in D$. Then $x\mapsto f(x,x)$ is called a \emph{$\partial$-Hermitian form}. If $\partial$ is trivial, we call it a \emph{quadratic form}. The \emph{discriminant} of a $\partial$-Hermitian form is the determinant of a matrix representing $f$ in any basis. It is well-defined up elements of the form $\lambda\partial(\lambda),\ \lambda\in D$. The \emph{rank} of a form is the rank of a matrix representing $f$ in any basis. The form is said to be \emph{non-degenerate} if it has full rank. Here are the results we will need about $\partial$-Hermitian forms.

\begin{proposition}[See \S4 in Lewis \cite{Lewis_IsometryClassificationHermitianForms} or example 5 of \S1 in Milnor \cite{Milnor_IsometriesInnerProductSpaces}]
\label{propHermitianQd}
Let $d\in\mathds{N}$ be not a square and let $\sigma\in\Gal(\mathds{Q}(\sqrt{d})/\mathds{Q})$ be non-trivial. Then $\sigma$-Hermitian forms on $\mathds{Q}(\sqrt{d})$ are classified by their rank and their discriminant\footnote{since $d>0$ we have that $\mathds{Q}(\sqrt{d})\otimes_{\mathds{Q}}\mathds{R}\simeq\mathds{R}\oplus\mathds{R}$ and thus there is no local invariant at the infinite place of $\mathds{Q}$}.
\end{proposition}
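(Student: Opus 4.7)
The plan is to apply the local-global (Hasse) principle for $\sigma$-Hermitian forms over the quadratic extension $\mathds{Q}(\sqrt{d})/\mathds{Q}$ and reduce the classification to a computation at each place of $\mathds{Q}$. First I would diagonalize: every non-degenerate $\sigma$-Hermitian form admits an orthogonal basis, yielding $f\simeq\langle a_1,\ldots,a_n\rangle$, and since the diagonal entries of a $\sigma$-Hermitian matrix are $\sigma$-fixed one has $a_i\in\mathds{Q}^\times$. The invariants are then the rank $n$ and the discriminant $\prod a_i$, taken in the quotient $\mathds{Q}^\times/\mathrm{N}_{\mathds{Q}(\sqrt{d})/\mathds{Q}}(\mathds{Q}(\sqrt{d})^\times)$.

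Next I would invoke Landherr's theorem (the Hasse principle for unitary groups over quadratic extensions of number fields): the localisation map
\begin{equation*}
H^1(\mathds{Q},\mathrm{U}(f))\longrightarrow\prod_{v}H^1(\mathds{Q}_v,\mathrm{U}(f))
\end{equation*}
is injective, so two forms are isometric globally if and only if they are isometric over every completion. At each place $v$ of $\mathds{Q}$ two possibilities arise. If $v$ splits in $\mathds{Q}(\sqrt{d})$, then $\mathds{Q}(\sqrt{d})\otimes_\mathds{Q}\mathds{Q}_v\simeq\mathds{Q}_v\oplus\mathds{Q}_v$ and Morita equivalence identifies $\sigma$-Hermitian forms over this étale algebra with $\mathds{Q}_v$-vector spaces, so they are classified by dimension alone; in particular, since $d>0$ the archimedean place of $\mathds{Q}$ is split in $\mathds{Q}(\sqrt{d})$ and contributes no signature invariant (this is what the footnote stresses). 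If instead $v$ is inert or ramified, $\mathds{Q}_v(\sqrt{d})/\mathds{Q}_v$ is a quadratic extension of local fields, and the classical local classification of Hermitian forms (see Lewis or Milnor) states that they are determined by rank and local discriminant in $\mathds{Q}_v^\times/\mathrm{N}(\mathds{Q}_v(\sqrt{d})^\times)$.

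Combining these, two $\sigma$-Hermitian forms of the same rank and the same global discriminant class agree at every completion, hence by the Hasse principle are isomorphic over $\mathds{Q}(\sqrt{d})$. Conversely, rank and discriminant are obviously invariants, finishing the classification. The main obstacle is the local classification at the inert and ramified finite places: unlike for quadratic forms, no Hasse-type $\pm1$ invariant appears, which reflects the vanishing of the relevant second cohomology of the unitary kernel at non-split local places. This is exactly the content extracted from the cited references and is what makes the statement so clean.
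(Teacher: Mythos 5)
Your argument is correct and is essentially the standard proof underlying the cited references: diagonalize, note the diagonal entries and hence the discriminant live in $\mathds{Q}^\times$ modulo norms, apply Landherr's Hasse principle, and observe that the split places (including the archimedean one, since $d>0$) contribute only the rank while the inert and ramified places contribute only rank and the local image of the discriminant. The paper itself offers no proof beyond the citation to Lewis and Milnor, and your outline reconstructs exactly the content of those sources, including the key point emphasized in the footnote that the splitting of the real place kills the signature invariant.
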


\begin{proposition}[See \S5 in \cite{Lewis_IsometryClassificationHermitianForms}]
\label{propHermitianconj}
Let $A$ be a quaternion division algebra over $\mathds{Q}$ that splits over $\mathds{R}$ with conjugation $\overline{\phantom{s}}$. Then $\overline{\phantom{s}}$-Hermitian forms on $A$ are classified by their rank.
\end{proposition}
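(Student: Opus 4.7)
My plan is to reduce the problem to local-global methods. By a Hasse principle for Hermitian forms over a quaternion algebra with involution of the first kind (proved by Jacobson and Landherr, see for instance Scharlau's book on quadratic and Hermitian forms, Chapter 10), two non-degenerate $\overline{\phantom{s}}$-Hermitian forms on $A^n$ are isometric over $A$ if and only if they are isometric over every completion $A_v = A \otimes_\mathds{Q} \mathds{Q}_v$. So I only have to classify Hermitian forms locally at each place of $\mathds{Q}$ and check that in each case the rank is the only invariant.

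At the archimedean place, the hypothesis that $A$ splits over $\mathds{R}$ gives $A_\infty \simeq \M_2(\mathds{R})$, and the conjugation transports to the symplectic involution. Morita equivalence then identifies Hermitian forms over $(\M_2(\mathds{R}), \text{symp})$ with alternating bilinear forms on $\mathds{R}$-vector spaces, and these are classified by rank alone. The same argument handles every finite place where $A$ is split. At a finite place $v$ where $A$ ramifies, $A_v$ is the unique quaternion division algebra over $\mathds{Q}_v$, and a theorem of Tsukamoto shows that Hermitian forms for its canonical involution are also classified by rank.

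Combining these local classifications with the Hasse principle yields the desired global classification by rank. The main technical point I would expect to need care on is the Morita dictionary at split places: one has to verify that the rank of a Hermitian $(\M_2(F), \text{symp})$-module on the quaternion side is related to the rank of the corresponding alternating $F$-form in a way that makes the bookkeeping of ranks compatible across all places. All of this is precisely the content of \S5 of Lewis's paper, so in practice the proof reduces to reading off the statement from his classification theorem.
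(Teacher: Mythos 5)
Your outline is correct and is consistent with what the paper does: the paper offers no proof of this proposition at all, citing \S5 of \cite{Lewis_IsometryClassificationHermitianForms} for it, and your local--global sketch (Hasse principle, Morita reduction to alternating forms at split places including the real place, local triviality at ramified places) is exactly the standard argument behind that citation. One small attribution slip: Tsukamoto's theorem classifies \emph{skew}-Hermitian forms over local quaternion division algebras; for the Hermitian case at a ramified finite place $v$ the clean argument is Jacobson's trace-form theorem together with the fact that the norm form of the local division algebra $A_v$ is round and universal, so that $\langle a_1,\dots,a_n\rangle\otimes n_{A_v}\simeq n\times n_{A_v}$ for any diagonalization, whence rank is the only local invariant there as well.
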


We can state the classification of $\mathds{Q}$-arithmetic groups in the cases of $\SL(n,\mathds{R})$, $\Sp(2n,\mathds{R})$ and $\SO(\I_{k+1,k},\mathds{R})$. It is based on the classification of arithmetic subgroups in classical Lie groups, see Morris \S18.5 \cite{Morris_IntroductionArithmeticGroups}. See also Remark \ref{remarkG2lattice} for the classification of $\mathds{Q}$-arithmetic subgroups in the case of $\G_2$.

The $\mathds{Q}$-arithmetic subgroups of $\SL(n,\mathds{R})$ are the subgroups commensurable with conjugates of:
\begin{itemize}
    \setlength\itemsep{0cm}
    \item
    $\SL(m,\mathcal{O})$ where $\mathcal{O}$ is an order of a division algebra $D$ over $\mathds{Q}$ of degree $d$ such that $D\otimes_{\mathds{Q}}\mathds{R}\simeq\M_d(\mathds{R})$ and with $n=dm$,
    \item
    $\SU(\B,\partial;\mathcal{O})$ where $L$ is a quadratic real extension of $\mathds{Q}$, $D$ is a division algebra over $L$ of degree $d$ such that $D\otimes_{\mathds{Q}}\mathds{R}\simeq\M_d(\mathds{R})\oplus\M_d(\mathds{R})$, $\mathcal{O}$ is an order in $D$, $\partial$ is an anti-involution of $D$ whose restriction to $L$ is the non-trivial automorphism of $L$ over $\mathds{Q}$, $\B$ is an invertible matrix in $\M_m(D)$ satisfying $\partial(\B^\top)=\B$ with $n=md$.
\end{itemize}

The following proposition will be used in the proof of Corollary \ref{corollary1}.

\begin{proposition}
\label{propositionprime}
Let $p\neq2$ be prime. The non-uniform lattices of $\SL(p,\mathds{R})$ are the subgroups commensurable with conjugates of $\SL(p,\mathds{Z})$ or of  $\SU(\I_p,\sigma;\mathds{Z}[\sqrt{d}])$ for $d\in\mathds{N}$ not a square and for the non-trivial $\sigma\in\Gal(\mathds{Q}(\sqrt{d})/\mathds{Q})$.
\end{proposition}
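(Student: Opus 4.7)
The plan is to reduce to the classification of $\mathds{Q}$-arithmetic subgroups of $\SL(p,\mathds{R})$ recalled earlier in this section, and then to cut the list down using the two special features: $p$ is prime (so only two possible divisors), and $p$ is odd (so the discriminant of a rank-$p$ Hermitian form can be absorbed by rescaling). Since $p\geq3$, the group $\SL(p,\mathds{R})$ has real rank at least $2$, so by Margulis arithmeticity every lattice is arithmetic; combining with Corollary 5.3.2 of Morris \cite{Morris_IntroductionArithmeticGroups}, every non-uniform lattice is in fact $\mathds{Q}$-arithmetic. Hence it is commensurable to a conjugate of one of the groups in the two families recalled above: either $\SL(m,\mathcal{O})$ with $p=\delta m$ (where $\delta$ is the degree of the central division algebra $D$ over $\mathds{Q}$), or $\SU(\B,\partial;\mathcal{O})$ with $p=\delta m$ (where now $D$ is a central division algebra of degree $\delta$ over a real quadratic extension $L/\mathds{Q}$).

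Since $p$ is prime, $\delta\in\{1,p\}$ in each family. The plan is to discard the case $\delta=p$ by showing that the associated $\mathds{Q}$-algebraic group is $\mathds{Q}$-anisotropic, hence by Borel--Harish-Chandra gives a uniform lattice. In family (a) with $\delta=p$ and $m=1$, the group $\SL_1(D)$ is $\mathds{Q}$-anisotropic because $D$ is a division algebra (its norm form has no nontrivial $\mathds{Q}$-rational zero). In family (b) with $\delta=p$ and $m=1$, $\SU(\B,\partial;\mathcal{O})$ is the norm-one group of a rank-one Hermitian form on a division algebra, which is again $\mathds{Q}$-anisotropic for the same reason. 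The remaining case $\delta=1$ in family (a) forces $D=\mathds{Q}$, $m=p$, $\mathcal{O}$ commensurable with $\mathds{Z}$, giving $\SL(p,\mathds{Z})$.

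It remains to analyze family (b) with $\delta=1$: then $D=L=\mathds{Q}(\sqrt{d})$ for some non-square $d\in\mathds{N}$, $\partial$ restricts on $L$ to the nontrivial element $\sigma\in\Gal(L/\mathds{Q})$, $\mathcal{O}$ is an order of $L$ (hence commensurable with $\mathds{Z}[\sqrt{d}]$), and $\B$ is a rank-$p$ $\sigma$-Hermitian form. By Proposition \ref{propHermitianQd}, such a form is determined up to equivalence by its rank and its discriminant $\disc(\B)\in\mathds{Q}^\times/N_{L/\mathds{Q}}(L^\times)$. The unitary group does not change if $\B$ is replaced by $\lambda\B$ for $\lambda\in\mathds{Q}^\times$, and this rescaling multiplies the discriminant by $\lambda^p$. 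Here the parity of $p$ enters: the quotient $\mathds{Q}^\times/N_{L/\mathds{Q}}(L^\times)$ is killed by $2$ because every square in $\mathds{Q}^\times$ is a norm ($a^2=N_{L/\mathds{Q}}(a)$), so for odd $p$ one has $\lambda^p\equiv\lambda$ in this quotient, and $\lambda\mapsto\lambda^p$ is onto. Choosing $\lambda$ with $\lambda\equiv\disc(\B)^{-1}$ yields $\lambda\B$ with trivial discriminant, hence equivalent over $\mathds{Q}$ to $\I_p$. Therefore $\SU(\B,\partial;\mathcal{O})$ is conjugate to a group commensurable with $\SU(\I_p,\sigma;\mathds{Z}[\sqrt{d}])$, finishing the classification.

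The main obstacle is the rescaling/discriminant argument in case (b), since it is the only step that relies essentially on $p$ being odd; everything else uses only that $p$ is prime. The uniform vs.\ non-uniform dichotomy for $\delta=p$ is standard but must be carefully checked against the Godement compactness criterion to rule out those division-algebra cases.
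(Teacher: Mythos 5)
Your proposal is correct and follows essentially the same route as the paper: Margulis arithmeticity plus Corollary 5.3.2 of Morris to reduce to $\mathds{Q}$-arithmetic groups, the primality of $p$ to force $(\delta,m)\in\{(1,p),(p,1)\}$ in each family of \S18.5, discarding the degree-$p$ division-algebra cases as uniform, and Proposition \ref{propHermitianQd} to normalize the Hermitian form. The only difference is that you spell out two steps the paper handles by citation or in one line --- the Godement/anisotropy reason the $\delta=p$ cases are cocompact, and the explicit rescaling $\B\mapsto\lambda\B$ showing that for odd $p$ the discriminant in $\mathds{Q}^{\times}/N_{L/\mathds{Q}}(L^{\times})$ can be made trivial --- and both elaborations are accurate.
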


\begin{proof}
By Margulis Arithmeticity Theorem (Theorem 16.3.1 in \cite{Morris_IntroductionArithmeticGroups}) all non-uniform lattices of $\SL(p,\mathds{R})$ are arithmetic subgroups. As can be seen in \cite{Morris_IntroductionArithmeticGroups}, Corollary 5.3.2, non-uniform arithmetic subgroups are $\mathds{Q}$-arithmetic. Thus they are commensurable with conjugates of the $\mathds{Z}$-points of a $\mathds{R}/\mathds{Q}$-form of $\SL_p$. As shown in \S18.5 of \cite{Morris_IntroductionArithmeticGroups}, their $\mathds{Q}$-points can only be of two kinds.

First they can be of the form $\SL(m,D)$ for $D$ a division algebra of degree $r$ over $\mathds{Q}$ such that $D\otimes_{\mathds{Q}}\mathds{R}\simeq\M_r(\mathds{R})$. Here we must have $rm=p$. If $(r,m)=(1,p)$, the corresponding arithmetic subgroup is of the form $\SL(p,\mathds{Z})$. If $(r,m)=(p,1)$, it is of the form $\SL(1,\mathcal{O})$ for $\mathcal{O}$ an order of $D$, but this is a uniform lattice (see the classification in \S18.5 in \cite{Morris_IntroductionArithmeticGroups}).

Secondly they can be of the form $\SU(\B,\partial;D)$ for $D$ a division algebra of degree $r$ over a real quadratic extension $L$ of $\mathds{Q}$ such that $D\otimes_{\mathds{Q}}L\simeq\M_r(\mathds{R})\oplus\M_r(\mathds{R})$, $\partial$ an anti-involution of $D$ whose restriction to $L$ is the non-trivial $\sigma\in\Gal(L/\mathds{Q})$ and $\B\in\M_m(D)$ invertible satisfying $\partial(\B^\top)=\B$. Once again we must have $mr=p$. If $(r,m)=(1,p)$, the corresponding arithmetic subgroup is of the form $\SU(\B,\sigma;\mathds{Z}[\sqrt{d}])$ where $L=\mathds{Q}(\sqrt{d})$. If $(r,m)=(p,1)$, it is of the form $\SU(\B,\partial;\mathcal{O})$ where $\mathcal{O}$ is an order of $D$, but this is a uniform lattice (see the classification in \S18.5 in \cite{Morris_IntroductionArithmeticGroups}).

It remains to show that all subgroups of the form $\SU(\B,\sigma;\mathds{Z}[\sqrt{d}])$ are conjugates of $\SU(\I_n,\sigma;\mathds{Z}[\sqrt{d}])$. By Proposition \ref{propHermitianQd}, up to scalar multiplication and conjugation there is only one non-degenerate Hermitian form on $\mathds{Q}(\sqrt{d})^p$. Hence $\SU(\B,\sigma;\mathds{Z}[\sqrt{d}])$ is conjugate to $\SU(\I_n,\sigma;\mathds{Z}[\sqrt{d}])$.
\end{proof}

\begin{proposition}
The $\mathds{Q}$-arithmetic subgroups of $\SO(\I_{k+1,k},\mathds{R})$, $k\geq 2$, are the subgroups commensurable with conjugates of $\SO(\B,\mathds{Z})$ for $\B\in\SL(2k+1,\mathds{Q})$ symmetric of signature $(k+1,k)$ if $k$ is even and $(k,k+1)$ if $k$ is odd. 
\end{proposition}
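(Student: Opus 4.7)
The plan is to argue as in the proof of Proposition \ref{propositionprime}, applying the classification of $\mathds{Q}$-forms of classical groups from \S18.5 of \cite{Morris_IntroductionArithmeticGroups}. By definition, a $\mathds{Q}$-arithmetic subgroup of $\SO(\I_{k+1,k},\mathds{R})$ is commensurable with a conjugate of the $\mathds{Z}$-points of some $\mathds{R}/\mathds{Q}$-form of $\SO(\I_{k+1,k})$, so the task reduces to identifying those forms.

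According to the classification in \S18.5 of \cite{Morris_IntroductionArithmeticGroups}, every $\mathds{Q}$-form of an orthogonal group in dimension $N$ is of one of two types: either $\SO(q)$ for a non-degenerate symmetric bilinear form $q$ over $\mathds{Q}$ in $N$ variables, or $\SU(\B,\partial;D)$ for a quaternion division algebra $D$ over $\mathds{Q}$ with an orthogonal-type anti-involution $\partial$ and a non-degenerate $\partial$-Hermitian form on $D^m$; in the second case one necessarily has $N=2m$, since $D$ has degree $2$. The essential observation is a parity argument: in our setting $N=2k+1$ is odd, so the quaternionic family is vacuous and every $\mathds{R}/\mathds{Q}$-form of $\SO(\I_{k+1,k})$ arises as $\SO(q)$ for a symmetric $\B \in \GL(2k+1,\mathds{Q})$. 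Sylvester's law of inertia then forces the signature of $\B$ to match that of $\I_{k+1,k}$, up to the standard identification $\SO(\I_{k+1,k},\mathds{R})\simeq\SO(\I_{k,k+1},\mathds{R})$.

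The remaining work is a normalization. The operations $\B\mapsto\lambda \B$ for $\lambda\in\mathds{Q}^*$ and $\B\mapsto P^\top \B P$ for $P\in\GL(2k+1,\mathds{Q})$ leave $\SO(\B)$ unchanged up to conjugation inside $\GL$, while multiplying $\det \B$ by $\lambda^{2k+1}(\det P)^2$. Because $2k+1$ is odd, $\lambda\mapsto\lambda^{2k+1}$ induces the identity on $\mathds{Q}^*/(\mathds{Q}^*)^2$, so a suitable choice of $(\lambda, P)$ brings $\B$ into $\SL(2k+1,\mathds{Q})$. Conversely, for any symmetric $\B\in\SL(2k+1,\mathds{Q})$ of signature $(k+1,k)$, the group $\SO(\B)$ is a $\mathds{Q}$-form of $\SO(\I_{k+1,k})$ whose $\mathds{Z}$-points $\SO(\B,\mathds{Z})$ constitute a $\mathds{Q}$-arithmetic subgroup, using also that different integral embeddings of a $\mathds{Q}$-algebraic group yield commensurable integer points. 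The only substantive step is the parity-based exclusion of the quaternionic family; everything else is routine normalization, so I do not expect a real obstacle here.
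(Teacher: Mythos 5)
Your proposal is correct and follows essentially the same route as the paper: both reduce to the classification of $\mathds{Q}$-forms of odd orthogonal groups in \S18.5 of \cite{Morris_IntroductionArithmeticGroups} (restricted to $F=\mathds{Q}$) and then normalize the determinant to $1$ by scaling and congruence, using that $2k+1$ is odd. You merely make explicit two points the paper leaves implicit, namely the parity exclusion of the quaternionic family and the computation $\det(\lambda P^\top\B P)=\lambda^{2k+1}(\det P)^2\det\B$.
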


\begin{proof}
This follows form the classification in \S18.5 in \cite{Morris_IntroductionArithmeticGroups}. Note that we can always assume that $\B$ has determinant 1 up to multiplying $\B$ by a scalar and up to congruence.
\end{proof}

The $\mathds{Q}$-arithmetic subgroups of $\Sp(2n,\mathds{R})$ are the subgroups commensurable with conjugates of:
\begin{itemize}
    \setlength\itemsep{0cm}
    \item $\Sp(2n,\mathds{Z})$
    \item $\SU(\B,\overline{\phantom{s}};\mathcal{O})$ where $A$ is a division quaternion algebra over $\mathds{Q}$ such that $A\otimes_{\mathds{Q}}\mathds{R}\simeq\M_2(\mathds{R})$, $\mathcal{O}$ is an order in $A$, $\overline{\phantom{s}}$ is the conjugation of $A$ and $\B$ is an invertible matrix in $\M_n(A)$ satisfying $\overline{\B}^\top=\B$.
\end{itemize}

\begin{proposition}
\label{propositionsymplectic}
Let $n\geq2$. Non-uniform lattices of $\Sp(2n,\mathds{R})$ are subgroups commensurable with conjugates of $\Sp(2n,\mathds{Z})$ or of $\SU(\I_n,\overline{\phantom{s}};\mathcal{O})$ for $\mathcal{O}$ an order of a division quaternion algebra over $\mathds{Q}$ that splits over $\mathds{R}$ with conjugation $\overline{\phantom{s}}$.
\end{proposition}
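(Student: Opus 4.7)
The plan is to follow the same template as Proposition \ref{propositionprime}. First, by Margulis Arithmeticity (Theorem 16.3.1 in \cite{Morris_IntroductionArithmeticGroups}) every non-uniform lattice of $\Sp(2n,\mathds{R})$ is arithmetic, and by Corollary 5.3.2 in the same book, every non-uniform arithmetic subgroup is $\mathds{Q}$-arithmetic. So one applies the classification of $\mathds{Q}$-arithmetic subgroups of $\Sp(2n,\mathds{R})$ recalled just before the statement: up to commensurability and conjugation, such a subgroup is either (a) $\Sp(2n,\mathds{Z})$, or (b) $\SU(\B,\overline{\phantom{s}};\mathcal{O})$ for a division quaternion algebra $A/\mathds{Q}$ with $A\otimes_{\mathds{Q}}\mathds{R}\simeq\M_2(\mathds{R})$, an order $\mathcal{O}$ in $A$, and an invertible $\B\in\M_n(A)$ with $\overline{\B}^\top=\B$. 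Case (a) is already on our list, so there is nothing to show there.

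For case (b) I need to reduce the shape of $\B$ and then verify non-uniformity. The reduction of $\B$ is immediate from Proposition \ref{propHermitianconj}: non-degenerate $\overline{\phantom{s}}$-Hermitian forms on $A^n$ are classified by their rank, hence there exists $\M\in\GL(n,A)$ with $\overline{\M}^\top\B\M=\I_n$. Conjugation by $\M$ identifies $\SU(\B,\overline{\phantom{s}};\mathcal{O})$ with a subgroup of $\SU(\I_n,\overline{\phantom{s}};A)$ that is commensurable with $\SU(\I_n,\overline{\phantom{s}};\mathcal{O}')$ for some (a priori different) order $\mathcal{O}'$ in $A$; but any two orders of $A$ are commensurable, so the result is commensurable with a conjugate of the stated $\SU(\I_n,\overline{\phantom{s}};\mathcal{O})$.

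The one step that is not purely bookkeeping is confirming that $\SU(\I_n,\overline{\phantom{s}};\mathcal{O})$ is actually non-uniform — this is where one has to make sure no cocompact groups have sneaked onto the list. By Godement's compactness criterion, non-uniformity of $\SU(\I_n,\overline{\phantom{s}};\mathcal{O})$ is equivalent to isotropy over $\mathds{Q}$ of the quadratic form
\begin{equation*}
q(x_1,\ldots,x_n)=\sum_{i=1}^n\Nred(x_i)
\end{equation*}
on $A^n$. Since $A$ splits over $\mathds{R}$ the reduced-norm form on $A$ has real signature $(2,2)$, so $q$ is a non-degenerate $\mathds{Q}$-quadratic form in $4n\geq 8$ variables of signature $(2n,2n)$, in particular indefinite. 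By Meyer's theorem (an indefinite rational quadratic form in at least $5$ variables is isotropic), $q$ is isotropic over $\mathds{Q}$, which gives non-uniformity. Alternatively, this can be read off directly from the uniformity columns of the classification in \S18.5 of \cite{Morris_IntroductionArithmeticGroups}. This completes the argument.
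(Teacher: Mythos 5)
Your argument is correct and follows essentially the same route as the paper's proof: Margulis arithmeticity together with Corollary 5.3.2 of \cite{Morris_IntroductionArithmeticGroups} reduces the question to the classification of $\mathds{Q}$-arithmetic subgroups of $\Sp(2n,\mathds{R})$, and Proposition \ref{propHermitianconj} normalizes the Hermitian form $\B$ to $\I_n$. Your additional Godement/Meyer verification that $\SU(\I_n,\overline{\phantom{s}};\mathcal{O})$ is indeed non-uniform is correct but not required for the statement as written, which only asserts the inclusion of non-uniform lattices into the list; the paper omits this step.
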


\begin{proof}
By Margulis Arithmeticity Theorem (Theorem 16.3.1 in \cite{Morris_IntroductionArithmeticGroups}) all non-uniform lattices of $\Sp(2n,\mathds{R})$ are arithmetic subgroups. As can be seen in \cite{Morris_IntroductionArithmeticGroups}, Corollary 5.3.2, all non-uniform arithmetic groups are $\mathds{Q}$-arithmetic. From the classification these can only be commensurable to conjugates of $\Sp(2n,\mathds{Z})$ or of $\SU(\B,\overline{\phantom{s}};\mathcal{O})$ with notations as above. By Proposition \ref{propHermitianconj}, there is only one nondegenerate $\overline{\phantom{s}}$-Hermitian form on a division quaternion algebra over $\mathds{Q}$ that splits over $\mathds{R}$. Thus $\SU(\B,\overline{\phantom{s}};\mathcal{O})$ is conjugate to $\SU(\I_n,\overline{\phantom{s}};\mathcal{O})$.
\end{proof}

\section{$\R/\QQ$-forms of $\SL_n$}
\label{sectionQformsSL}

In this section we study $\mathds{R}/\mathds{Q}$-forms of $\SL_n$ using cohomological tools. 

\subsection{Non-abelian cohomology}

Let $G$ be a topological group acting continuously on a topological group $H$ (not necessarily abelian) by automorphisms. For $g\in G$ and $\phi\in\Aut(H)$, define
$$\prescript{g}{}{\phi}:x\mapsto g\phi(g^{-1}x).$$
A \emph{$1$-cocycle} is a map $\xi:G\to\Aut(H)$ that satisfies
$$\xi(g_1g_2)=\xi(g_1)\circ\prescript{g_1}{}{\xi(g_2)}$$ for all $g_1,g_2\in G$. We say that two $1$-cocycles $\xi$ and $\zeta$ are \emph{equivalent} if there exists $\phi\in\Aut(H)$ such that for all $g\in G$ we have
$$\xi(g)=\phi^{-1}\circ\zeta(g)\circ\prescript{g}{}{\phi}.$$

\begin{definition}
The \emph{first Galois cohomology set of $G$ (with coefficient in $H$)} is the set of equivalence classes of continuous $1$-cocycles. It is denoted by $\HH^1(G,H)$. This is not a group in general.
\end{definition}

\subsection{$\R/\QQ$-forms and nonabelian Galois cohomology}

We want to describe $\mathds{R}/\mathds{Q}$-forms of $\SL_n$. Unfortunately, the extension $\mathds{R}/\mathds{Q}$ is not Galois, which prevents us from using the cohomological description of forms of algebraic groups. However, thanks to Proposition 2 of III.\S1 in \cite{Serre_GaloisCohomology},
$\mathds{R}/\mathds{Q}$-forms of $\SL_n$ are $\overline{\mathds{Q}}/\mathds{Q}$-forms of $\SL_n$ where $\overline{\mathds{Q}}$ is the subfield of $\mathds{C}$ of algebraic numbers over $\mathds{Q}$.

Let $k$ be an algebraic number field. We start by recalling the cohomological description of $\overline{\mathds{Q}}/k$-forms of an algebraic group. The main references for the material covered here are \cite{Morris_IntroductionArithmeticGroups} chapter 18 and Platonov and Rapinchuk's book \cite{Platonov_AlgebraicgroupsNumbertheory} section 1.3.

Let $\G$ be a $k$-algebraic subgroup of $\GL_n$. A \emph{$\overline{\mathds{Q}}/k$-form} of $\G$ is an algebraic group $\HH$ over $k$ such that $\HH\simeq\G$ over $\overline{\mathds{Q}}$. We endow $\Gal(\overline{\mathds{Q}}/k)$ with its profinite topology (see \cite{Platonov_AlgebraicgroupsNumbertheory} page 22) and $\Aut(\G(\overline{\mathds{Q}}))$ with the discrete topology. An element $\sigma\in\Gal(\overline{\mathds{Q}}/k)$ acts on $\G(\overline{\mathds{Q}})$ by applying $\sigma$ to all entries of the matrix $g\in\G(\overline{\mathds{Q}})$. This allows us to define $\HH^1(\Gal(\overline{\mathds{Q}}/k),\Aut(\G(\overline{\mathds{Q}})))$, the first Galois cohomology set of $\G$.

Let $\xi:\Gal(\overline{\mathds{Q}}/k))\rightarrow \Aut(\G(\overline{\mathds{Q}}))$ be a continuous 1-cocycle and denote by $\overline{\mathds{Q}}[\G]$ the coordinate ring of $\G$. We define
$$\prescript{}{\xi}{k[\G]}=\{f\in\overline{\mathds{Q}}[\G]\ |\ \sigma\circ f\circ\sigma^{-1}\circ\xi(\sigma)^{-1}=f\ \forall\sigma\in\Gal(\overline{\mathds{Q}}/k)\}.$$ This is a $k$-algebra and we let $\prescript{}{\xi}{\G}$ be the $k$-algebraic subgroup of $\GL_n$ that has $\prescript{}{\xi}{k[\G]}$ as coordinate ring. It is a $\overline{\mathds{Q}}/k$-form of $\G$ since $\overline{\mathds{Q}}\otimes_{k}\prescript{}{\xi}{k[\G]}\simeq\overline{\mathds{Q}}[\G]$. Concretely, its $k$-points are
$$\prescript{}{\xi}{\G}(k)=\{\M\in\G(\overline{\mathds{Q}})\ |\ \xi(\sigma)\circ\sigma(\M)=\M\ \forall\sigma\in\Gal(\overline{\mathds{Q}}/k)\}.$$

Conversely if $\HH$ is a $\overline{\mathds{Q}}/k$-form of $\G$ and $\Phi:\HH(\overline{\mathds{Q}})\rightarrow\G(\overline{\mathds{Q}})$ is an isomorphism, let
\begin{equation*}
    \xi:\Gal(\overline{\mathds{Q}}/k)\rightarrow\Aut(\G(\overline{\mathds{Q}})), \sigma\mapsto\Phi\circ\sigma\circ\Phi^{-1}\circ\sigma^{-1}.
\end{equation*}
This is a continuous 1-cocycle. In this way, $\HH^1(\Gal(\overline{\mathds{Q}}/k),\Aut(\G(\overline{\mathds{Q}})))$ classifies $\overline{\mathds{Q}}/k$-forms of $\G$ (see \cite{Platonov_AlgebraicgroupsNumbertheory} Theorem 2.9 and \S2.2.3):

\begin{proposition}
Denote by $\mathcal{F}$ the set of isomorphism classes of $\overline{\mathds{Q}}/k$-forms of $\G$. The map $$\HH^1(\Gal(\overline{\mathds{Q}}/k),\Aut(\G(\overline{\mathds{Q}})))\to\mathcal{F},\ [\xi]\mapsto[\prescript{}{\xi}{\G}]$$ is a bijection.
\end{proposition}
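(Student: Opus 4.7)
The plan is to check that the map is well-defined, surjective, and injective; this is the classical Galois descent theorem for forms of an algebraic group, and the paper has already supplied the key constructions in both directions. Write $\Sigma = \Gal(\overline{\mathds{Q}}/k)$, so that by definition
$$\prescript{}{\xi}{\G}(k) = \{\M \in \G(\overline{\mathds{Q}}) \mid \xi(\sigma)(\sigma(\M)) = \M \ \forall \sigma \in \Sigma\},$$
i.e.\ the $k$-points are the fixed points of the twisted semi-linear action $\sigma \cdot \M := \xi(\sigma)(\sigma(\M))$, where $\Sigma$ acts on $\Aut(\G(\overline{\mathds{Q}}))$ by conjugation, so $\prescript{\sigma}{}{\phi} = \sigma \circ \phi \circ \sigma^{-1}$.

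For well-definedness, suppose $\xi$ and $\zeta$ are cohomologous via some $\phi \in \Aut(\G(\overline{\mathds{Q}}))$, meaning $\xi(\sigma) = \phi^{-1} \circ \zeta(\sigma) \circ \prescript{\sigma}{}{\phi}$. Rearranging this yields $\phi \circ \xi(\sigma) \circ \sigma = \zeta(\sigma) \circ \sigma \circ \phi$, which says exactly that $\phi$ intertwines the two twisted Galois actions on $\G(\overline{\mathds{Q}})$. Hence $\phi$ restricts to a $k$-isomorphism $\prescript{}{\xi}{\G} \xrightarrow{\sim} \prescript{}{\zeta}{\G}$, and the map descends to isomorphism classes.

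For surjectivity, given a form $\HH$ with isomorphism $\Phi : \HH(\overline{\mathds{Q}}) \to \G(\overline{\mathds{Q}})$, the paper already produces the cocycle $\xi(\sigma) = \Phi \circ \sigma \circ \Phi^{-1} \circ \sigma^{-1}$. I would first verify the cocycle identity $\xi(\sigma\tau) = \xi(\sigma) \circ \prescript{\sigma}{}{\xi(\tau)}$ by direct substitution, and then check that $\Phi$ restricts to a $k$-isomorphism $\HH \xrightarrow{\sim} \prescript{}{\xi}{\G}$: for $\N \in \HH(k)$ one computes $\xi(\sigma)(\sigma(\Phi(\N))) = \Phi(\sigma(\N)) = \Phi(\N)$, and the reverse containment follows by running the same argument through $\Phi^{-1}$. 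Injectivity is essentially dual: a $k$-isomorphism $\prescript{}{\xi}{\G} \xrightarrow{\sim} \prescript{}{\zeta}{\G}$ extends uniquely to an element $\phi \in \Aut(\G(\overline{\mathds{Q}}))$ intertwining the two twisted actions, and reading the intertwining relation backwards gives the coboundary equation between $\xi$ and $\zeta$.

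The genuine content, and the main obstacle in my view, is the Galois descent statement that $\prescript{}{\xi}{k[\G]} \subset \overline{\mathds{Q}}[\G]$ really defines a $k$-structure, i.e.\ that $\overline{\mathds{Q}} \otimes_k \prescript{}{\xi}{k[\G]} \simeq \overline{\mathds{Q}}[\G]$. This is the descent theorem for vector spaces with a continuous semi-linear $\Sigma$-action: continuity of $\xi$ is what ensures its action factors, on any finite-dimensional invariant subspace, through some finite Galois quotient, after which the vanishing of $\HH^1$ of $\GL_n$ (Hilbert 90) trivialises the semi-linear representation. Once this is granted, the bijection is the formal bookkeeping outlined above; in practice one simply invokes the reference to Platonov--Rapinchuk, Theorem 2.9, as the paper does.
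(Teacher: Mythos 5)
Your outline is correct and matches the paper's treatment: the paper states this proposition as the standard classification of forms by nonabelian Galois cohomology and simply cites Platonov--Rapinchuk (Theorem 2.9), which is exactly the descent theorem you identify as the real content (that continuity of $\xi$ plus Hilbert 90 makes $\prescript{}{\xi}{k[\G]}$ a genuine $k$-structure). The formal verifications of well-definedness, surjectivity and injectivity you sketch are the standard ones and are carried out correctly in the paper's conventions.
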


A $\overline{\mathds{Q}}/k$-form of $\G$ is said to be \emph{inner} if its associated 1-cocycle has values in $\Inn(\G(\overline{\mathds{Q}}))$, the group of inner automorphisms. Since $\Aut(\SL(2,\overline{\mathds{Q}}))=\Inn(\SL(2,\overline{\mathds{Q}}))$, every $\overline{\mathds{Q}}/k$-form of $\SL_2$ is inner.

\subsection{Compatible cocycles}

Let $a,b\in\mathds{N}$. Define
\begin{align*}
    &\T^{a,b}:\Gal(\overline{\mathds{Q}}/\mathds{Q})\rightarrow\PGL(2,\overline{\mathds{Q}})=\PSL(2,\overline{\mathds{Q}})\\
    \sigma&\mapsto\left\{\begin{array}{ll}
        \I_2 &\mbox{if $\sigma(\sqrt{a})=\sqrt{a}$ and $\sigma(\sqrt{b})=\sqrt{b}$}  \\
        \begin{pmatrix}
            1&0\\
            0&-1
        \end{pmatrix} &\mbox{if $\sigma(\sqrt{a})=\sqrt{a}$ and $\sigma(\sqrt{b})=-\sqrt{b}$}\\
        \begin{pmatrix}
        0&1\\
        1&0
        \end{pmatrix} &\mbox{if $\sigma(\sqrt{a})=-\sqrt{a}$ and $\sigma(\sqrt{b})=\sqrt{b}$}\\
        \begin{pmatrix}
        0&1\\
        -1&0
        \end{pmatrix} &\mbox{if $\sigma(\sqrt{a})=-\sqrt{a}$ and $\sigma(\sqrt{b})=-\sqrt{b}$.}
    \end{array}
    \right.
\end{align*}
For simplicity we will often write $\T_{\sigma}$ or $\T^{a,b}_{\sigma}$ instead of $\T^{a,b}({\sigma})$. Note that \begin{align*}
    &\tau_n(\T^{a,b}):\Gal(\overline{\mathds{Q}}/\mathds{Q})\rightarrow\PSL(n,\overline{\mathds{Q}})\\
    \sigma\mapsto&\left\{\begin{array}{ll}
        \I_n &\mbox{if $\sigma(\sqrt{a})=\sqrt{a}$ and $\sigma(\sqrt{b})=\sqrt{b}$}  \\
        \begin{pmatrix}
            (-1)^{n-1}&&\\
            &\ddots&\\
            &&1
        \end{pmatrix} &\mbox{if $\sigma(\sqrt{a})=\sqrt{a}$ and $\sigma(\sqrt{b})=-\sqrt{b}$}\\
        \begin{pmatrix}
        &&1\\
        &\iddots&\\
        1&&
        \end{pmatrix} &\mbox{if $\sigma(\sqrt{a})=-\sqrt{a}$ and $\sigma(\sqrt{b})=\sqrt{b}$}\\
        \begin{pmatrix}
        &&1\\
        &\iddots&\\
        (-1)^{n-1}&&
        \end{pmatrix} &\mbox{if $\sigma(\sqrt{a})=-\sqrt{a}$ and $\sigma(\sqrt{b})=-\sqrt{b}$.}
    \end{array}
    \right.
\end{align*}

For an invertible matrix $\M$, denote by $\Int(\M)$ the automorphism of $\SL_n(\overline{\mathds{Q}})$ given by $\X\mapsto \M\X\M^{-1}$.

\begin{lemma}
\label{lemma1}
For a continuous $1$-cocycle $\xi:\Gal(\overline{\mathds{Q}}/\mathds{Q})\rightarrow\Aut(\SL_2(\overline{\mathds{Q}}))$ such that $\prescript{}{\xi}{\SL_2(\mathds{R})}\simeq\SL(2,\mathds{R})$, there exist $a,b\in\mathds{N}$ and $\PP\in\SL(2,\overline{\mathds{Q}})$ such that $\xi(\sigma)=\Int(\PP^{-1}\T^{a,b}_{\sigma}\sigma(\PP))$ for every $\sigma\in\Gal(\overline{\mathds{Q}}/\mathds{Q})$.
\end{lemma}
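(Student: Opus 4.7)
The plan is to leverage the cohomological classification of $\overline{\mathds{Q}}/\mathds{Q}$-forms of $\SL_2$ via quaternion algebras, and to exhibit $\T^{a,b}$ as a concrete cocycle representative of the class associated to $(a,b)_{\mathds{Q}}$.

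First I would observe that, since every automorphism of $\SL_2(\overline{\mathds{Q}})$ is inner and every element of $\PGL_2(\overline{\mathds{Q}})$ lifts to $\SL_2(\overline{\mathds{Q}})$ (one can extract a square root of the determinant in $\overline{\mathds{Q}}$), both $\xi$ and the map $\sigma\mapsto\Int(\T^{a,b}_\sigma)$ may be viewed as cocycles with values in $\PSL_2(\overline{\mathds{Q}})$. A short computation verifies that $\T^{a,b}$ defines a homomorphism $\Gal(\mathds{Q}(\sqrt{a},\sqrt{b})/\mathds{Q})\simeq(\mathds{Z}/2\mathds{Z})^2\to\PSL_2(\mathds{Q})$; since its values are Galois-invariant matrices, this is equivalent to the cocycle condition.

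Next I would invoke the classification recalled above: $\HH^1(\Gal(\overline{\mathds{Q}}/\mathds{Q}),\Aut(\SL_2(\overline{\mathds{Q}})))$ is in bijection with the set of $\overline{\mathds{Q}}/\mathds{Q}$-forms of $\SL_2$, which themselves are classified by isomorphism classes of quaternion algebras $A$ over $\mathds{Q}$ via $A\mapsto\{x\in A\mid\Nred(x)=1\}$. The hypothesis $\prescript{}{\xi}{\SL_2(\mathds{R})}\simeq\SL(2,\mathds{R})$ is equivalent to saying that the algebra $A$ attached to the class of $\xi$ splits over $\mathds{R}$. Using the isomorphisms $(a,b)_{\mathds{Q}}\simeq(b,a)_{\mathds{Q}}\simeq(a,-ab)_{\mathds{Q}}$ and the square invariance $(ax^2,b)_{\mathds{Q}}\simeq(a,b)_{\mathds{Q}}$ (together with clearing denominators), one may write $A\simeq(a,b)_{\mathds{Q}}$ with $a,b\in\mathds{N}$.

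The central step, and the main obstacle, is to show that the cocycle $\sigma\mapsto\Int(\T^{a,b}_\sigma)$ represents the class corresponding to $(a,b)_{\mathds{Q}}$. I would prove this by computing the $\mathds{Q}$-points of $\prescript{}{\T^{a,b}}{\SL_2}$, namely the matrices $\M\in\SL_2(\overline{\mathds{Q}})$ satisfying $\T^{a,b}_\sigma\,\sigma(\M)\,(\T^{a,b}_\sigma)^{-1}=\M$ for all $\sigma$, and matching them with the norm-one elements of $(a,b)_{\mathds{Q}}$ under the standard embedding $(a,b)_{\mathds{Q}}\hookrightarrow\M_2(\mathds{Q}(\sqrt{a},\sqrt{b}))$ given by $i\mapsto\Diag(\sqrt{a},-\sqrt{a})$ and $j\mapsto\begin{pmatrix}0&\sqrt{b}\\\sqrt{b}&0\end{pmatrix}$. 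Imposing the invariance relations for the two generators $\sigma_a,\sigma_b$ of $\Gal(\mathds{Q}(\sqrt{a},\sqrt{b})/\mathds{Q})$ forces $\M$ to have exactly the form appearing in the definition of $\Gamma_{a,b}$ (with entries in $\mathds{Q}$ rather than $\mathds{Z}$), which is precisely the image of the above embedding restricted to norm-one elements; the reduced norm on $(a,b)_{\mathds{Q}}$ then matches the determinant on the matrix side.

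Once this identification is established, the bijection yields that $\xi$ and $\sigma\mapsto\Int(\T^{a,b}_\sigma)$ are cohomologous, so there exists $\phi\in\Aut(\SL_2(\overline{\mathds{Q}}))$ with $\xi(\sigma)=\phi^{-1}\circ\Int(\T^{a,b}_\sigma)\circ\prescript{\sigma}{}{\phi}$. Writing $\phi=\Int(\PP)$ with $\PP$ lifted from $\PSL_2(\overline{\mathds{Q}})$ to $\SL_2(\overline{\mathds{Q}})$, and using the elementary identity $\prescript{\sigma}{}{\Int(\PP)}=\Int(\sigma(\PP))$, the required formula $\xi(\sigma)=\Int(\PP^{-1}\,\T^{a,b}_\sigma\,\sigma(\PP))$ follows (the ambiguity $\PP\mapsto-\PP$ in the lift does not affect the formula since $\Int$ is insensitive to scaling).
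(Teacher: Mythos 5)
Your proposal is correct and follows essentially the same route as the paper: both reduce to the classification of $\overline{\mathds{Q}}/\mathds{Q}$-forms of $\SL_2$ by quaternion algebras, use the splitting over $\mathds{R}$ to normalize $A\simeq(a,b)_{\mathds{Q}}$ with $a,b\in\mathds{N}$, and identify $\sigma\mapsto\Int(\T^{a,b}_\sigma)$ as the cocycle of $\SL(1,A)$ via the standard matrix embedding before concluding by cohomologous equivalence. You merely spell out the verification (matching the fixed points of the $\T^{a,b}$-twisted Galois action with the norm-one elements of $(a,b)_{\mathds{Q}}$) that the paper compresses into ``direct computation.''
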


\begin{proof}
Every $\overline{\mathds{Q}}/\mathds{Q}$-form of $\SL_2$ is of the form $\HH(K)=\SL(1,A\otimes_{\mathds{Q}}K)$ ($K$ any field extension of $\mathds{Q}$) with $A=(a,b)_{\mathds{Q}}$ a quaternion algebra (see Proposition 2.17 in \cite{Platonov_AlgebraicgroupsNumbertheory}). If $a$ and $b$ are both negative then $\prescript{}{\xi}{\SL_2(\mathds{R})}\simeq\SU(2)$. Thus $a$ or $b$ is positive and up to a change of basis we can assume that $a$ and $b$ are both positive integers. Consider the group
\begin{equation*}
    \SL(1,A)=\begin{Bmatrix}
    \begin{pmatrix}
    x_0+\sqrt{a}x_1&\sqrt{b}x_2+\sqrt{ab}x_3\\
    \sqrt{b}x_2-\sqrt{ab}x_3&x_0-\sqrt{a}x_1
    \end{pmatrix}
    |\ x_i\in\mathds{Q},\ \Det=1\end{Bmatrix}
\end{equation*} consisting of the elements of $A$ of norm $1$.
When the $x_i$ are taken in $\overline{\mathds{Q}}$ we actually get $\SL(2,\overline{\mathds{Q}})$. Denote by $\Phi$ the identity embedding in $\SL(2,\overline{\mathds{Q}})$.
Direct computation shows that the 1-cocycle associated to $\Phi$ is $\sigma\mapsto\Int(\T^{a,b}_{\sigma})$. The $\overline{\mathds{Q}}/\mathds{Q}$-form of $\SL_2$ associated to $\xi$ is a conjugate of $
\SL(1,\A)$ whence the result.
\end{proof}

\begin{definition}
\label{definitioncompatible}
Let $n\geq3$ and let $\G$ be a $\mathds{Q}$-algebraic subgroup of $\SL_n$ such that $\tau_n(\SL(2,\overline{\mathds{Q}}))<\G(\overline{\mathds{Q}})$. Let $\xi:\Gal(\overline{\mathds{Q}}/\mathds{Q})\to\Aut(\SL_2(\overline{\mathds{Q}}))$ and $\zeta:\Gal(\overline{\mathds{Q}}/\mathds{Q})\to\Aut(\G(\overline{\mathds{Q}}))$ be $1$-cocycles. We say that $\zeta$ is \emph{$\tau_n$-compatible} with $\xi$ if $\tau_n(\prescript{}{\xi}{\SL_2(\mathds{Q})})<\prescript{}{\zeta}{\G(\mathds{Q})}$.
\end{definition}

\begin{definition}
\label{definition1}
For all $n\geq2$, let $\J_n$ be
\begin{equation*}
\begin{pNiceMatrix}
    &&&&(n-1)!\\
    &&&\Iddots&\\
    &&(-1)^{i-1}(n-i)!(i-1)!&&\\
    &\Iddots&&&\\
    (-1)^{n-1}(n-1)!&&&&&\\
\end{pNiceMatrix}.
\end{equation*}
Note that for any $\M\in\SL(2,\mathds{C})$, $\tau_n(\M)^\top\J_n\tau_n(\M)=\J_n$ (see for example McGarraghy \cite{McGarraghy_SymmetricPowersSymmetricBilinear}).
\end{definition}

\begin{proposition}
\label{proposition1}
Let $n\geq 3$. Let $\xi:\Gal(\overline{\mathds{Q}}/\mathds{Q})\rightarrow\Aut(\SL_2(\overline{\mathds{Q}}))$ and $\zeta:\Gal(\overline{\mathds{Q}}/\mathds{Q})\to\Aut(\SL_n(\overline{\mathds{Q}}))$ be continuous $1$-cocycles. Suppose that $\prescript{}{\xi}{\SL_2(\mathds{R})}\simeq\SL(2,\mathds{R})$. Write $\xi(\sigma)=\Int(\PP^{-1}\T^{a,b}_{\sigma}\sigma(\PP))$ for all $\sigma$ with $\PP\in\PSL(2,\overline{\mathds{Q}})$. Then $\zeta$ is $\tau_n$-compatible with $\xi$ if and only if either
\begin{equation}
\label{equation1}
\zeta:\sigma\mapsto\Int(\tau_n(\PP^{-1}\T^{a,b}_{\sigma}\sigma(\PP)))
\end{equation}
or there exists a quadratic field extension $\mathds{Q}(\sqrt{d})$ of $\mathds{Q}$ such that
\begin{equation}
\label{equation2}
    \zeta:\sigma\mapsto\left\{
    \begin{array}{ll}
        \Int(\tau_n(\PP^{-1}\T^{a,b}_{\sigma}\sigma(\PP))) & \mbox{if $\sigma(\sqrt{d})=\sqrt{d}$} \\
        \Int(\tau_n(\PP^{-1}\T^{a,b}_{\sigma})\J_n^{-1})\circ\omega\circ\Int(\tau_n\circ\sigma(\PP)) & \mbox{if $\sigma(\sqrt{d})=-\sqrt{d}$}
    \end{array}\right.
\end{equation}
where $\omega(\M)=(\M^\top)^{-1}$.
\end{proposition}

\begin{proof}
Up to conjugation we can assume that $\PP=\I_2$.
Suppose that $\tau_n({}_{\xi}\SL_2(\mathds{Q}))<{}_{\zeta}\SL_n(\mathds{Q})$. Fix from now on $\sigma\in\Gal(\overline{\mathds{Q}}/\mathds{Q})$. It follows from $\Aut(\SL_n(\overline{\mathds{Q}}))=\Inn(\SL_n(\overline{\mathds{Q}}))\rtimes\langle\omega\rangle$ (see Theorem 2.8 in \cite{Platonov_AlgebraicgroupsNumbertheory}) that $\zeta(\sigma)$ is either inner or the composition of an inner automorphism with $\omega$.

Suppose that $\zeta(\sigma)=\Int(\A_{\sigma})$ for some $\A_\sigma\in\PSL(n,\overline{\mathds{Q}})$. Then, by definition, all $\M\in{}_{\xi}\SL_2(\mathds{Q})$ satisfy
\begin{align*}
    &\tau_n(\T_{\sigma}\sigma(\M)\T_{\sigma}^{-1})=\tau_n(\M)=\A_{\sigma}\sigma(\tau_n(\M))\A_{\sigma}^{-1}\\
    \implies&\A_{\sigma}\tau_n(\T_{\sigma}^{-1})\tau_n(\M)=\tau_n(\M)\A_{\sigma}\tau_n(\T_{\sigma})^{-1}
\end{align*}
since $\tau_n\circ\sigma=\sigma\circ\tau_n$. Hence $\A_{\sigma}\tau_n(\T_{\sigma})^{-1}$ commutes with $\tau_n(\prescript{}{\xi}{\SL_2(\mathds{Q})})$ thus with its Zariski-closure which is $\tau_n(\SL_2(\overline{\mathds{Q}}))$. Schur's lemma (see Lemma 1.7 in \cite{Fulton_RepresentationTheory}) shows that $\A_{\sigma}=\tau_n(\T_{\sigma})$. It follows that if $\zeta$ is inner, it is of the form of equation \eqref{equation1}.

Suppose that $\zeta(\sigma)=\Int(\A_{\sigma})\circ\omega$ for some $\A_{\sigma}\in\PSL(n,\overline{\mathds{Q}})$. Then for all $\M\in{}_{\xi}\SL_2(\mathds{Q})$
\begin{align*}
    &\tau_n(\T_{\sigma}\sigma(\M)\T_{\sigma}^{-1})=\tau_n(\M)=\A_{\sigma}\omega(\sigma\circ\tau_n(\M))\A_{\sigma}^{-1}\\
    \implies&\tau_n(\T_{\sigma}\sigma(\M)\T_{\sigma}^{-1})=\tau_n(\M)=\A_{\sigma}\J_n\sigma(\tau_n(\M))\J_n^{-1}\A_{\sigma}^{-1}\\
    \implies&\A_{\sigma}\J_n\tau_n(\T_{\sigma})^{-1}\tau_n(\M)=\tau_n(\M)\A_{\sigma}\J_n\tau_n(\T_{\sigma})^{-1}.
\end{align*}
By the argument above we get $\A_{\sigma}=\tau_n(\T_{\sigma})\J_n^{-1}$.

Since $\J_n$ is in $\PGL(n,\mathds{Q})$ and $\T_{\sigma}$ has a representative in $\PSL(2,\mathds{Q})$, for all $\sigma$ the automorphism $\zeta(\sigma)$ is fixed under the action of $\Gal(\overline{\mathds{Q}}/\mathds{Q})$. Applying the $1$-cocycle formula we see that $\zeta(\sigma\tau)=\zeta(\sigma)\zeta(\tau)$ for all $\sigma,\tau\in\Gal(\overline{\mathds{Q}}/\mathds{Q})$ which means that $\zeta$ is group homomorphism. Suppose $\zeta$ is not inner. Then $\W=\zeta^{-1}(\Inn(\SL_n(\overline{\mathds{Q}})))$ is of index 2 in $\Gal(\overline{\mathds{Q}}/\mathds{Q})$. It is closed and open because $\zeta$ is continuous. Thus its fixed field is a quadratic extension of $\mathds{Q}$ which can be written as $\mathds{Q}(\sqrt{d})$ for some $d\in\mathds{Z}$. Finally we have shown that $\zeta$ is of the form of equation \eqref{equation2}.

Conversely, all maps defined by \eqref{equation1} or \eqref{equation2} are 1-cocycles as shown by computation using the fact that $\tau_n(\T_{\sigma})$ and $\J_n$ commute.
\end{proof}

\subsection{Determining the $\QQ$-form associated to a $\tau_n$-compatible cocycle}

We now explain how to determine the $\overline{\mathds{Q}}/\mathds{Q}$-form $\prescript{}{\zeta}{\SL_n}$ for a $1$-cocycle $\zeta$ which $\tau_n$-compatible with $\xi$.

Let $k$ be a field. Wedderburn's Theorem (see \cite{Maclachlan_ArithmeticHyperbolic3Manifolds} Theorem 2.9.6) states that every central simple algebra over $k$ is isomorphic to $\M_n(D)$ for some $n$ and some division algebra $D$. We say that two central simple algebras $A_1$ and $A_2$ over $k$ are \emph{equivalent} if $A_1\simeq\M_n(D)$ and $A_2\simeq\M_m(D)$ for a division algebra $D$ over $k$. The set of equivalence classes of central simple algebras over $k$ is called the \emph{Brauer group} and is denoted by $\Br(k)$. It has a group structure induced by the tensor product, the inverse of a given central simple algebra being its opposite algebra.

Suppose that $k$ is an algebraic number field and consider $\overline{\mathds{Q}}^*$ endowed with the discrete topology. A \emph{factor set} (also called a \emph{$2$-cocycle}) is a map $f:\Gal(\overline{\mathds{Q}}/k)\times\Gal(\overline{\mathds{Q}}/k)\to\overline{\mathds{Q}}^*$ that satisfies
$$\sigma_1f(\sigma_2,\sigma_3)f(\sigma_1\sigma_2,\sigma_3)^{-1}f(\sigma_1,\sigma_2\sigma_3)f(\sigma_1,\sigma_2)^{-1}=1$$
for any $\sigma_1,\sigma_2,\sigma_3\in\Gal(\overline{\mathds{Q}}/k)$. We will often write $f_{\sigma,\tau}$ for $f(\sigma,\tau)$. The abelian group structure of $\overline{\mathds{Q}}^*$ induces a group structure on the set of factor sets.
A factor set $f$ is said to be \emph{trivial} if there exists $\phi:\Gal(\overline{\mathds{Q}}/k)\to\overline{\mathds{Q}}^*$ such that
$$f(\sigma_1,\sigma_2)=\phi(\sigma_1\sigma_2)\phi(\sigma_1)^{-1}\sigma_1(\phi(\sigma_2))^{-1}$$ for all $\sigma_1,\sigma_2\in\Gal(\overline{\mathds{Q}}/k)$. We denote by $\HH^2(\Gal(\overline{\mathds{Q}}/k),\overline{\mathds{Q}}^*)$ the group of continuous factor sets modulo the subgroup consisting of trivial factor sets. Since $\overline{\mathds{Q}}^*$ is abelian, this is usual group cohomology.

From an equivalence class $[(a_{\sigma,\tau})_{\sigma,\tau}]\in\HH^2(\Gal(\overline{\mathds{Q}}/k),\overline{\mathds{Q}}^*)$, we construct an associated central simple algebra as follows. Since $$\HH^2(\Gal(\overline{\mathds{Q}}/k),\overline{\mathds{Q}}^*)=\varinjlim \HH^2(\Gal(K/k),K^*)$$ where $K$ runs through all finite Galois extensions of $k$, there exist such a $K$ and a class of factor set $[(b_{\sigma,\tau})_{\sigma,\tau}]\in \HH^2(\Gal(K/k),K^*)$ such that $[(a_{\sigma,\tau})_{\sigma,\tau}]$ is the image of $[(b_{\sigma,\tau})_{\sigma,\tau}]$. We then consider $(K,k,(b_{\sigma,\tau})_{\sigma,\tau})$ the central simple algebra over $k$ whose underlying vector space is
$$\bigoplus\limits_{\sigma\in\Gal(K/k)}K v_{\sigma}$$
and multiplication is defined by
\begin{equation*}
    (xv_\sigma)(yv_\tau)=x\sigma(y)b_{\sigma,\tau}v_{\sigma\tau}\ \textrm{for all $\sigma,\tau\in\Gal(K/k)$}.
\end{equation*} The equivalence class of central simple algebras corresponding to $[(a_{\sigma,\tau})_{\sigma,\tau}]$ in the Brauer group is the class of $(K,k,(b_{\sigma,\tau})_{\sigma,\tau})$. It follows that (see Theorem 4.4.7 in \cite{Gille_CentralSimpleAlgebrasGaloisCohomology}):

\begin{proposition}
\label{propositionBrauer}
The map that associates an equivalence class of central simple algebras to a class of factor set induces an isomorphism $$\HH^2(\Gal(\overline{\mathds{Q}}/k),\overline{\mathds{Q}}^*)\simeq\Br(k).$$
\end{proposition}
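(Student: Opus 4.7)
The plan is to prove this by the classical crossed-product construction, verifying in turn that the assignment $[(b_{\sigma,\tau})]\mapsto[(K,k,(b_{\sigma,\tau}))]$ is well-defined on cohomology classes, is a group homomorphism for the Brauer group structure, and is a bijection. Since $\HH^2(\Gal(\overline{\mathds{Q}}/k),\overline{\mathds{Q}}^*)$ is the direct limit of the $\HH^2(\Gal(K/k),K^*)$ over finite Galois extensions $K/k$, and similarly the Brauer group is the union of the relative Brauer groups $\Br(K/k)$ of CSAs split by $K$, it suffices to establish the correspondence at each finite Galois level and check compatibility with the direct system.

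First I would check well-definedness and functoriality at the finite level: given $K/k$ finite Galois with group $G=\Gal(K/k)$, the crossed product $(K,k,f)$ with multiplication $(xv_\sigma)(yv_\tau)=x\sigma(y)f_{\sigma,\tau}v_{\sigma\tau}$ is associative precisely because $f$ satisfies the 2-cocycle relation. If $f$ is a coboundary, say $f_{\sigma,\tau}=\phi(\sigma\tau)\phi(\sigma)^{-1}\sigma(\phi(\tau))^{-1}$, then the change of basis $v_\sigma\mapsto\phi(\sigma)^{-1}v_\sigma$ identifies $(K,k,f)$ with the trivial crossed product $(K,k,1)\simeq\M_{|G|}(k)$, which is Brauer-trivial. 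A direct computation with the tensor product of two crossed products shows that pointwise multiplication of cocycles corresponds to the Brauer product, hence the map is a group homomorphism from $\HH^2(G,K^*)$ to $\Br(K/k)$.

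Next I would prove surjectivity: every CSA $A$ over $k$ is Brauer-equivalent to some crossed product $(K,k,f)$. The key input here is that any CSA over $k$ admits a strictly maximal subfield which is separable, hence may be enlarged to a finite Galois splitting field $K/k$; one then shows that the centralizer of $K$ inside $A\otimes_k\M_n(k)$ (for suitable $n$) is isomorphic to $K$, and the Skolem--Noether theorem produces units $u_\sigma$ normalizing $K$ and realizing every $\sigma\in\Gal(K/k)$, from which the 2-cocycle $f_{\sigma,\tau}=u_\sigma u_\tau u_{\sigma\tau}^{-1}$ emerges and realizes $A$ as $(K,k,f)$. This is the main obstacle: the Skolem--Noether theorem and the existence of a Galois splitting field are the nontrivial structural results, and everything downstream is essentially bookkeeping.

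Finally, for injectivity, I would argue that if $(K,k,f)$ is Brauer-trivial, i.e.\ isomorphic to $\M_n(k)$ as a $k$-algebra, then under the isomorphism the subfield $K$ acts on $k^n$ and the units $v_\sigma$ become $k$-linear maps $T_\sigma$ intertwining the $K$-action with its $\sigma$-twist. Another application of Skolem--Noether inside $\M_n(k)$ produces an element $\phi(\sigma)\in K^*$ with $T_\sigma=\phi(\sigma)\cdot(\text{fixed choice})$, and unravelling the relation $v_\sigma v_\tau=f_{\sigma,\tau}v_{\sigma\tau}$ shows that $f$ is the coboundary of $\phi$. Passing to the direct limit in both cohomology and the Brauer group then yields the stated isomorphism $\HH^2(\Gal(\overline{\mathds{Q}}/k),\overline{\mathds{Q}}^*)\simeq\Br(k)$.
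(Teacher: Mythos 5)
The paper does not prove this proposition: it states it with a pointer to Theorem 4.4.7 of Gille--Szamuely, and the preceding text only sets up the crossed-product construction $(K,k,(b_{\sigma,\tau}))$ so that the correspondence can be used in the computation of the boundary map $\delta$ that follows. Your proposal therefore differs from the paper only in that you actually open the black box: what you outline is the classical crossed-product proof of the cited theorem, and its architecture --- associativity from the cocycle identity, invariance under coboundaries, multiplicativity, surjectivity via a finite Galois splitting field together with Skolem--Noether, injectivity via a second application of Skolem--Noether to the $\sigma$-semilinear maps $T_\sigma$, and passage to the direct limit over finite Galois $K/k$ --- is correct and is essentially the proof found in the reference.

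Two caveats on your weighting of the steps. The point you call ``a direct computation'' and ``essentially bookkeeping,'' namely that $(K,k,f)\otimes_k(K,k,g)$ is Brauer-equivalent to $(K,k,fg)$, is, together with surjectivity, the substantial part of the argument: one must exhibit the separability idempotent $e\in K\otimes_kK$ and identify $e\bigl((K,k,f)\otimes_k(K,k,g)\bigr)e$ with $(K,k,fg)$, or else argue cohomologically through $\HH^1(\Gal(K/k),\PGL_n)$ as Gille--Szamuely do. Likewise the direct-limit step needs the (easy but not contentless) lemma that the crossed product built from the inflation of $f$ to $\Gal(L/k)$ is Brauer-equivalent to $(K,k,f)$, so that the maps on the two sides of the limit match. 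Finally, a small convention slip: with the paper's normalization $f_{\sigma,\tau}=\phi(\sigma\tau)\phi(\sigma)^{-1}\sigma(\phi(\tau))^{-1}$ for trivial factor sets, the rescaling that trivializes the crossed product is $v_\sigma\mapsto\phi(\sigma)v_\sigma$; your choice $v_\sigma\mapsto\phi(\sigma)^{-1}v_\sigma$ multiplies the factor set by $f$ again rather than cancelling it. None of this affects the soundness of the plan, and since $k$ is a number field here the separability input in your surjectivity step is automatic.
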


The exact sequence
\begin{equation*}
    1\rightarrow\overline{\mathds{Q}}^*\rightarrow\GL_n(\overline{\mathds{Q}})\xrightarrow[]{\text{$\pi$}}\PSL_n(\overline{\mathds{Q}})\rightarrow1
\end{equation*}
gives rise to a map $$\delta:\HH^1(\Gal(\overline{\mathds{Q}}/k),\PSL_n(\overline{\mathds{Q}}))\rightarrow \HH^2(\Gal(\overline{\mathds{Q}}/k),\overline{\mathds{Q}}^*)\simeq\Br(k)$$
such that if $[\zeta]\in\HH^1(\Gal(\overline{\mathds{Q}}/k),\PSL_n(\overline{\mathds{Q}}))$ corresponds to the inner $\overline{\mathds{Q}}/k$-form $$\HH(K)=\SL(n,D\otimes_{k}K)$$ for any field extension $K$ of $k$ and with $D$ a central simple algebra over $k$, then $\delta(\zeta)=D$. We can calculate $\delta([\zeta])$ explicitly. First, for every $\sigma\in\Gal(\overline{\mathds{Q}}/k)$ choose an element $\M_{\sigma}\in\GL_n(\overline{\mathds{Q}})$ such that $\pi(\M_{\sigma})=\zeta(\sigma)$. Then define $a_{\sigma,\tau}=\M_\sigma\sigma(\M_\tau)\M_{\sigma\tau}^{-1}\in\overline{\mathds{Q}}^*$ for every $\sigma,\tau\in\Gal(\overline{\mathds{Q}}/k)$. This is a factor set. Under the isomorphism of Proposition \ref{propositionBrauer}, it corresponds to the equivalence class of $D$.

If $\xi:\Gal(\overline{\mathds{Q}}/\mathds{Q})\to\PSL(2,\overline{\mathds{Q}})$ is a cocycle we define
$$\prescript{}{\xi}{\M_2}(\mathds{Q})=\{x\in\M_2(\overline{\mathds{Q}})\ |\ \xi(\sigma)(\sigma(x))\xi(\sigma)^{-1}=x\}.$$
It is a quaternion algebra over $\mathds{Q}$.

\begin{proposition}
\label{proposition23}
Let $\xi:\Gal(\overline{\mathds{Q}}/\mathds{Q})\rightarrow\Aut(\SL_2(\overline{\mathds{Q}}))$ be a continuous $1$-cocycle such that $\prescript{}{\xi}{\SL_2(\mathds{R})}\simeq\SL(2,\mathds{R})$. Let $\zeta:\Gal(\overline{\mathds{Q}}/\mathds{Q})\rightarrow\Aut(\SL_n(\overline{\mathds{Q}}))$ be a continuous $1$-cocycle $\tau_n$-compatible with $\xi$ such that $\prescript{}{\zeta}{\SL_n}(\mathds{R})\simeq\SL(n,\mathds{R})$. If $\zeta$ is inner, then
\begin{equation*}
    \prescript{}{\zeta}{\SL_n(\mathds{Q})}\simeq\left\{
    \begin{array}{ll}
        \SL(n,\mathds{Q}) & \mbox{if n is odd} \\
        \SL(\frac{n}{2},\prescript{}{\xi}{\M_2(\mathds{Q})}) & \mbox{if n is even.}
    \end{array}\right.
\end{equation*}
If $\zeta$ is not inner, let $\mathds{Q}(\sqrt{d})$ be the associated quadratic extension. Then
\begin{equation*}
    {}_{\zeta}\SL_n(\mathds{Q})\simeq\left\{
    \begin{array}{ll}
        \SU(\I_n,\sigma;\mathds{Q}(\sqrt{d})) & \mbox{if $n$ is odd} \\
        \SU(\I_{\frac{n}{2}},\overline{\phantom{s}}\otimes\sigma;\prescript{}{\xi}{\M_2(\mathds{Q})}\otimes_{\mathds{Q}}\mathds{Q}(\sqrt{d})) & \mbox{if $n$ is even.}
    \end{array}\right.
\end{equation*}
\end{proposition}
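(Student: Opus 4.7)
The plan is to identify the $\mathds{Q}$-form $\prescript{}{\zeta}{\SL_n}$ in both cases of Proposition \ref{proposition1}; after a conjugation I may assume $\PP = \I_2$. In the inner case, inner $\overline{\mathds{Q}}/\mathds{Q}$-forms of $\SL_n$ are precisely the groups $\SL(m,D)$ for $D$ a central simple algebra over $\mathds{Q}$ with $n = m\deg D$, so identifying $\prescript{}{\zeta}{\SL_n(\mathds{Q})}$ reduces to computing the Brauer class of $D$ via the connecting map $\delta:\HH^1(\Gal(\overline{\mathds{Q}}/\mathds{Q}),\PSL_n(\overline{\mathds{Q}}))\to\Br(\mathds{Q})$. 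Lifting $\zeta(\sigma)=\Int(\tau_n(\T^{a,b}_\sigma))$ to $\M_\sigma:=\tau_n(\T^{a,b}_\sigma)\in\GL_n(\overline{\mathds{Q}})$, the factor set equals $a_{\sigma,\tau}=\tau_n(b_{\sigma,\tau})$, where $b_{\sigma,\tau}=\T^{a,b}_\sigma\sigma(\T^{a,b}_\tau)(\T^{a,b}_{\sigma\tau})^{-1}\in\{\pm\I_2\}$ is precisely the $\SL_2$-level factor set of $\xi$, whose Brauer class is that of the quaternion algebra $A:=\prescript{}{\xi}{\M_2(\mathds{Q})}$.

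For $n$ odd, $\tau_n(-\I_2)=\I_n$ (since $\tau_n$ descends to an honest homomorphism $\PSL_2\to\SL_n$), so $a_{\sigma,\tau}$ is trivial, $D=\mathds{Q}$, and $\prescript{}{\zeta}{\SL_n(\mathds{Q})}\simeq\SL(n,\mathds{Q})$. For $n$ even, $\tau_n(-\I_2)=-\I_n$, so the factor set of $\zeta$ equals $b_{\sigma,\tau}$ viewed in $\overline{\mathds{Q}}^*$, giving $[D]=[A]$ in $\Br(\mathds{Q})$ and therefore $\prescript{}{\zeta}{\SL_n(\mathds{Q})}\simeq\SL(n/2,A)$ uniformly in whether $A$ splits or not.

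For the non-inner case, let $L=\mathds{Q}(\sqrt{d})$. The restriction $\zeta|_{\Gal(\overline{\mathds{Q}}/L)}$ is inner and, by applying the inner analysis over $L$, determines the underlying $L$-algebra as $\SL(n,L)$ for $n$ odd, respectively $\SL(n/2,A\otimes_\mathds{Q} L)$ for $n$ even. The full $\mathds{Q}$-form is then of the form $\SU(\B,\partial;D')$ with $\partial$ extending the non-trivial element of $\Gal(L/\mathds{Q})$, so it remains to pin down $\B$. Choosing $\sigma_0\in\Gal(\overline{\mathds{Q}}/\mathds{Q})$ with $\sigma_0(\sqrt{a})=\sqrt{a}$, $\sigma_0(\sqrt{b})=\sqrt{b}$ and $\sigma_0(\sqrt{d})=-\sqrt{d}$ (when $\sqrt{d}\in\mathds{Q}(\sqrt{a},\sqrt{b})$, an analogous choice with $\T^{a,b}_{\sigma_0}$ a permutation matrix leads to the same conclusion after a change of basis), one has $\T^{a,b}_{\sigma_0}=\I_2$ and $\zeta(\sigma_0)(\M)=\J_n^{-1}(\M^\top)^{-1}\J_n$; the invariance condition $\zeta(\sigma_0)(\sigma_0(\M))=\M$ then reads $\sigma_0(\M)^\top\J_n\M=\J_n$, so the invariant Hermitian matrix is $\J_n$.

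To finish, I invoke Propositions \ref{propHermitianQd} and \ref{propHermitianconj} to replace $\J_n$ by the identity. For $n$ odd, $\J_n^\top=\J_n$, and its determinant—an antidiagonal product—is a rational square, hence trivial in $\mathds{Q}^*/\N_{L/\mathds{Q}}(L^*)$ and matching $\det(\I_n)$; Proposition \ref{propHermitianQd} then gives $\prescript{}{\zeta}{\SL_n(\mathds{Q})}\simeq\SU(\I_n,\sigma;L)$. For $n$ even, $\J_n$ is skew-symmetric over $L$ but, via the embedding $\M_{n/2}(A\otimes L)\hookrightarrow\M_n(L)$, encodes a $\overline{\phantom{s}}\otimes\sigma$-Hermitian form on the rank-$n/2$ $(A\otimes L)$-module, and Proposition \ref{propHermitianconj} (applied over $L$) classifies such forms by rank alone, yielding $\SU(\I_{n/2},\overline{\phantom{s}}\otimes\sigma;A\otimes L)$. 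The delicate step is this final even case: correctly interpreting the $L$-skew-symmetric matrix $\J_n$ as a $\overline{\phantom{s}}\otimes\sigma$-Hermitian form over $A\otimes L$, using the compatibility between the $L$-transpose on $\M_n(L)$ and the anti-involution of $A\otimes L$, and verifying its rank matches $\I_{n/2}$.
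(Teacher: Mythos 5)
Your overall strategy coincides with the paper's: compute the Brauer class of the inner (or restricted-to-$L$) part via the connecting map $\delta$, then identify the invariant Hermitian matrix from equation (2) of Proposition \ref{proposition1} and appeal to the classification of Hermitian forms. Your treatment of the inner case is correct and in fact slightly cleaner than the paper's: by choosing the explicit lifts $\tau_n(\T^{a,b}_\sigma)$ you get a factor set with values in $\{\pm1\}$ representing $[\prescript{}{\xi}{\M_2(\mathds{Q})}]$, and the parity of $n-1$ immediately gives triviality for $n$ odd and the class of $A$ for $n$ even; the paper reaches the same conclusion with arbitrary lifts and the determinant coboundary $\psi(\sigma)=\Det(\M_\sigma)$.

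There are, however, two genuine gaps in your non-inner case. First, you invoke Proposition \ref{propHermitianconj} to conclude that the form in the even case is classified ``by rank alone.'' That proposition concerns $\overline{\phantom{s}}$-Hermitian forms for the quaternionic conjugation over $\mathds{Q}$, an involution of the first kind; here the relevant involution is $\overline{\phantom{s}}\otimes\sigma$ on $A\otimes_{\mathds{Q}}\mathds{Q}(\sqrt{d})$, which is of the second kind (it acts non-trivially on the centre $L=\mathds{Q}(\sqrt{d})$), and such forms are \emph{not} classified by rank alone: the discriminant is an invariant. The paper cites \S7 of \cite{Lewis_IsometryClassificationHermitianForms} for classification by rank and discriminant and then checks by direct computation that the discriminant of $\J_n\tau_n(\T^{a,b}_\sigma)^{-1}$ is a square up to sign; without that check your argument does not establish congruence to $\I_{n/2}$. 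Second, your reduction to the matrix $\J_n$ relies on finding $\sigma_0$ fixing $\sqrt{a}$ and $\sqrt{b}$ with $\sigma_0(\sqrt{d})=-\sqrt{d}$, which fails exactly when $\sqrt{d}\in\mathds{Q}(\sqrt{a},\sqrt{b})$ --- and this is not an edge case but the situation actually used in the proofs of Theorems \ref{theorem1} and \ref{theorem2} (e.g.\ $\sqrt{a}\in\mathds{Q}(\sqrt{d})$). There the invariant matrix is $\J_n\tau_n(\T^{a,b}_{\sigma_0})^{-1}$ with $\T^{a,b}_{\sigma_0}\neq\I_2$, and one must verify the Hermitian property and compute rank and discriminant of this modified matrix; your parenthetical ``leads to the same conclusion after a change of basis'' asserts rather than proves this, whereas the paper carries out the computation. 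Both gaps are repairable (the needed discriminant computations do come out right), but as written the even non-inner case is not justified.
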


\begin{proof}
Let $k$ denote either $\mathds{Q}$ or $\mathds{Q}(\sqrt{d})$ depending on whether $\zeta$ is inner or not. The restriction $\xi_{k}$ of $\xi$ to $\Gal(\overline{\mathds{Q}}/k)$ has values in $\PSL(2,\overline{\mathds{Q}})$. For every $\sigma\in\Gal(\overline{\mathds{Q}}/k)$ choose $\M_{\sigma}\in\GL_2(\overline{\mathds{Q}})$ such that $\pi(\M_{\sigma})=\xi_{k}(\sigma)$. Define $a_{\sigma,\tau}=\M_\sigma\sigma(\M_\tau)\M_{\sigma\tau}^{-1}$ for every $\sigma,\tau\in\Gal(\overline{\mathds{Q}}/k)$. Note that $\zeta_{k}(\sigma)=\tau_n(\xi_{k}(\sigma))$ for all $\sigma$. Then
\begin{equation*}
    \zeta_{k}(\sigma)=\tau_n(\xi_{k}(\sigma))=\tau_n(\pi(\M_{\sigma}))=\pi(\tau_n(\M_{\sigma}))
\end{equation*}
for every $\sigma\in\Gal(\overline{\mathds{Q}}/k)$. Thus $$\tau_n(\M_\sigma)\sigma(\tau_n(\M_\tau))\tau_n(\M_{\sigma\tau}^{-1})=\tau_n(\M_\sigma\sigma(\M_\tau)\M_{\sigma\tau}^{-1})$$ for every $\sigma,\tau\in\Gal(\overline{\mathds{Q}}/k)$, so that $\delta(\zeta)$ is the 2-cocycle given by $(a_{\sigma,\tau})^{n-1}$. Define
\begin{align*}
    \psi:\Gal(\overline{\mathds{Q}}/k)&\rightarrow\overline{\mathds{Q}}^*\\
    \sigma&\mapsto\Det(\M_{\sigma})
\end{align*}
and note that
\begin{align*}
    \psi(\sigma)\sigma(\psi(\tau))\psi(\sigma\tau)^{-1}&=\Det(\M_\sigma)\sigma(\Det(\M_\tau))\Det(\M_{\sigma\tau})^{-1}\\
    &=\Det(\M_\sigma\sigma(\M_\tau)\M_{\sigma\tau}^{-1})\\
    &=\Det(a_{\sigma,\tau}\I_2)=(a_{\sigma,\tau})^2
\end{align*}
for all $\sigma,\tau\in\Gal(\overline{\mathds{Q}}/k)$.
If $n$ is odd then $(a_{\sigma,\tau})^{n-1}=\psi(\sigma)^l\sigma(\psi(\tau))^l\psi(\sigma\tau)^{-l}$ where $l=\frac{n-1}{2}$, which shows that the class of $(a_{\sigma,\tau})_{\sigma,\tau}$ is actually trivial in $\HH^2(\Gal(\overline{\mathds{Q}}/k),\overline{\mathds{Q}}^*)$. If $n$ is even then $(a_{\sigma,\tau})^{n-1}=\psi(\sigma)^l\sigma(\psi(\tau))^l\psi(\sigma\tau)^{-l}a_{\sigma,\tau}$ where $l=\frac{n-2}{2}$, so $(a_{\sigma,\tau})^{n-1}$ is equivalent to $a_{\sigma,\tau}$ in $\HH^2(\Gal(\overline{\mathds{Q}}/\mathds{Q}),\overline{\mathds{Q}}^*)$. We have thus determined $\prescript{}{\xi_{k}}{\SL_n(k)}$.

It remains to compute $\prescript{}{\zeta}{\SL_n(\mathds{Q})}$ in the case where $\zeta$ is not inner. Note that since $\prescript{}{\zeta}{\SL_n(\mathds{R})}\simeq\SL_n(\mathds{R})$ we must have $d>0$. For all $\sigma\in\Gal(\overline{\mathds{Q}}/\mathds{Q}(\sqrt{d}))$ we have $\zeta(\sigma)\circ\sigma(\M)=\M$. This means that $\M\in\prescript{}{\zeta_{\mathds{Q}(\sqrt{d})}}{\SL_n(\mathds{Q}(\sqrt{d}))}$. Using Lemma \ref{lemma1}, there exist $a,b\in\mathds{N}$ and $\PP\in\SL(2,\overline{\mathds{Q}})$ such that $\xi(\sigma)=\Int(\PP^{-1}\T_{\sigma}^{a,b}\sigma(\PP))$ for every $\sigma\in\Gal(\overline{\mathds{Q}}/\mathds{Q})$. Up to conjugation we can assume that $\PP=\I_2$.

For any $\sigma\in\Gal(\overline{\mathds{Q}}/\mathds{Q})$ such that $\sigma(\sqrt{d})=-\sqrt{d}$ the equations becomes \begin{equation}
\label{equationhermitianform}
    \sigma(\M)^\top\J_n\tau_n(\T_{\sigma}^{a,b})^{-1}\M=\J_n\tau_n(\T_{\sigma}^{a,b})^{-1}
\end{equation} It remains to understand the Hermitian form $\J_n\tau_n(\T_{\sigma}^{a,b})^{-1}$.

Let $n$ be odd. By Proposition \ref{propHermitianQd}, $\sigma$-Hermitian forms over $\mathds{Q}(\sqrt{d})$ are classified by their rank and their discriminant. Direct computations show that $\J_n\tau_n(\T_{\sigma}^{a,b})^{-1}$ has rank $n$ and its discriminant is a square or minus a square. Up to multiplying the matrix by $-1$, which does not affect the unitary group, we see that it is congruent to $\I_n$ and the corresponding unitary groups are conjugate. 

Let $n$ be even. An element of $\prescript{}{\zeta_{\mathds{Q}}(\sqrt{d})}{\SL_n(\mathds{Q}(\sqrt{d}))}$ satisfies equation (\ref{equationhermitianform}) for one automorphism $\sigma\in\Gal(\overline{\mathds{Q}}/\mathds{Q})$ such that $\sigma(\sqrt{d})=-\sqrt{d}$ if and only if it satisfies it for all such $\sigma$. Up to change of basis, we can suppose that neither $a$ nor $b$ is a square (note that $\M_2(\mathds{Q})\simeq(5,5)_{\mathds{Q}}$). Under this assumption, $\sigma$ induces on $\prescript{}{\zeta_{\mathds{Q}(\sqrt{d})}}{\M_2(\mathds{Q}(\sqrt{d}))}$ the anti-involution $\overline{\phantom{s}}\otimes\tau$ where $\tau\in\Gal(\mathds{Q}(\sqrt{d})/\mathds{Q})$ is non-trivial and $\overline{\phantom{s}}$ is the conjugation of $\prescript{}{\zeta_{\mathds{Q}(\sqrt{d})}}{\M_2(\mathds{Q}(\sqrt{d}))}$. From \S 7 in \cite{Lewis_IsometryClassificationHermitianForms}, $\overline{\phantom{s}}\otimes\tau$-Hermitian forms of a quaternion algebra over $\mathds{Q}(\sqrt{d})$ are classified by their discriminant and their rank. Again, direct computations show that $\J_n\tau_n(\T_{\sigma}^{a,b})^{-1}$ has rank $n$, and its discriminant is a square up to sign. Thus it is congruent to \[\I_{\frac{n}{2}}\in\M(\text{\tiny{\( \frac{n}{2} \)}}, \prescript{}{\zeta_{\mathds{Q}(\sqrt{d})}}{\M_2(\mathds{Q}(\sqrt{d}))}\] and the corresponding unitary groups are conjugate.
\end{proof}

\subsection{Proof of Propositions \ref{proposition2} and \ref{proposition2.1}}

\begin{definition}
Let $\G$ be a linear $\mathds{Q}$-algebraic group and consider a continuous $1$-cocycle $\zeta:\Gal(\overline{\mathds{Q}}/\mathds{Q})\to\G(\overline{\mathds{Q}})$. We denote by $\prescript{}{\zeta}{\G}(\mathds{Z})$ any subgroup of $\prescript{}{\zeta}{\G}(\mathds{Q})$ commensurable with the $\mathds{Z}$-points of $\prescript{}{\zeta}{\G}(\mathds{Q})$.
\end{definition}

\begin{proof}[Proof of Propositions \ref{proposition2} and \ref{proposition2.1}]
If $\phi:\G\rightarrow\G'$ is a $\mathds{Q}$-morphism of algebraic groups over $\mathds{Q}$ then for any $\mathds{Q}$-arithmetic subgroup $\Gamma$ of $\G(\mathds{Q})$, $\phi(\Gamma)$ is contained in a $\mathds{Q}$-arithmetic subgroup of $\G'(\mathds{Q})$ (see Milne Proposition 5.2 of Appendix A \cite{Milne_LieAlgebrasAlgebraicGroupsLieGroups}). It implies that if $\xi$ and $\zeta$ are $1$-cocycles which are $\tau_n$-compatible, any $\mathds{Q}$-arithmetic subgroup of $\prescript{}{\xi}{\SL_2(\mathds{Q})}$ has image under $\tau_n$ which lies in a $\mathds{Q}$-arithmetic subgroup of $\prescript{}{\zeta}{\SL_n(\mathds{Q})}$ with $\zeta$ a 1-cocycle $\tau_n$-compatible with $\xi$. The $\mathds{Q}$-arithmetic subgroups of $\prescript{}{\zeta}{\SL_n(\mathds{Q})}$ are commensurable with groups isomorphic to
\begin{itemize}
    \setlength\itemsep{0cm}
    \item $\SL(n,\mathds{Z})$ if $n$ is odd and $\zeta$ is inner
    \item $\SU(\I_n,\sigma;\mathds{Z}[\sqrt{d}])$ if $n$ is odd and $\zeta$ is not inner
    \item $\SL(\frac{n}{2},\mathcal{O})$ if $n$ is even and $\zeta$ is inner
    \item $\SU(\I_{\frac{n}{2}},\overline{\phantom{s}}\otimes\sigma;\mathcal{O}\otimes_{\mathds{Z}}\mathds{Z}[\sqrt{d}])$ if $n$ is even, $\zeta$ is not inner and $\prescript{}{\xi}{\M_2(\mathds{Q})}\otimes_{\mathds{Q}}\mathds{Q}(\sqrt{d})\not\simeq\M_2(\mathds{Q}(\sqrt{d}))$
    \item $\SU(\I_n,\sigma,\mathds{Z}[\sqrt{d}])$ if $n$ is even, $\zeta$ is not inner and $\prescript{}{\xi}{\M_2(\mathds{Q})}\otimes_{\mathds{Q}}\mathds{Q}(\sqrt{d})\simeq\M_2(\mathds{Q}(\sqrt{d}))$
\end{itemize}
where $\mathcal{O}$ is an order of the quaternion algebra $\prescript{}{\xi}{\M_2(\mathds{Q})}$ and $d\in\mathds{N}$ square free. We get arithmetic subgroups of $\SL(n,\mathds{R})$ exactly when $\zeta$ is inner or when $\zeta$ is not inner and $d>0$.

Conversely let $\Gamma$ be a $\mathds{Q}$-arithmetic subgroup of $\SL(2,\mathds{R})$. Since every $\mathds{R}/\mathds{Q}$-form of $\SL_2$ is a $\overline{\mathds{Q}}/\mathds{Q}$-form of $\SL_2$ (see Proposition 2 of III.\S1 in \cite{Serre_GaloisCohomology}) there exists $$\xi:\Gal(\overline{\mathds{Q}}/\mathds{Q})\to\Aut(\SL_2(\overline{\mathds{Q}}))$$ a $1$-cocycle such that $\prescript{}{\xi}{\SL_2}(\mathds{Z})$ is commensurable with $\Gamma$. Let $\Lambda$ be a $\mathds{Q}$-arithmetic subgroup of $\SL(n,\mathds{R})$ and $$\zeta:\Gal(\overline{\mathds{Q}}/\mathds{Q})\to\Aut(\SL_n(\overline{\mathds{Q}}))$$ be a $1$-cocycle such that $\prescript{}{\zeta}{\SL_n}(\mathds{Z})$ is commensurable with $\Lambda$. We assume that $\tau_n(\Gamma)<\Lambda$ and we want to show that $\tau_n(\prescript{}{\xi}{\SL_2}(\mathds{Q}))<\prescript{}{\zeta}{\SL_n}(\mathds{Q})$. For any $\sigma\in\Gal(\overline{\mathds{Q}}/\mathds{Q})$, let
$$\tau^{\sigma}_n:\SL_2(\overline{\mathds{Q}})\to\SL_n(\overline{\mathds{Q}}),g\mapsto\zeta(\sigma)\circ\sigma\circ\tau_n\circ(\xi(\sigma)\circ\sigma)^{-1}(g).$$ This is an algebraic morphism that coincides with $\tau_n$ on a finite index subgroup of $\prescript{}{\xi}{\SL_2(\mathds{Z})}$. For any $\sigma$ consider
$$(\tau_n,\tau^{\sigma}_n):\SL_2(\overline{\mathds{Q}})\to\SL_n(\overline{\mathds{Q}})\times\SL_n(\overline{\mathds{Q}}).$$ Since $\{(g,h)\in\SL_n(\overline{\mathds{Q}})\times\SL_n(\overline{\mathds{Q}})|h=g\}$ is closed in the product for the Zariski topology, its inverse image by $(\tau_n,\tau^{\sigma}_n)$ has to be closed in $\SL_2(\overline{\mathds{Q}})$ so contains the Zariski-closure of $\prescript{}{\xi}{\SL_2(\mathds{Z})}$. Since any finite index subgroup of $\prescript{}{\xi}{\SL_2(\mathds{Z})}$ is Zariski-dense in $\SL_2(\overline{\mathds{Q}})$, we conclude that $\tau_n$ and $\tau^{\sigma}_n$ coincide on $\SL_2(\overline{\mathds{Q}})$.

For every $\sigma\in\Gal(\overline{\mathds{Q}}/\mathds{Q})$ and $g\in\prescript{}{\xi}{\SL_2}(\mathds{Q})$ we have $\tau_n(g)=\tau_n^{\sigma}(g)$. It means that $\tau_n(\prescript{}{\xi}{\SL_2}(\mathds{Q}))<\prescript{}{\zeta}{\SL_n}(\mathds{Q})$. Thus $\xi$ and $\zeta$ are $\tau_n$-compatible and Proposition \ref{proposition23} concludes the proof.
\end{proof}

\section{$\R/\QQ$-forms of other split real Lie groups}

In this section we establish analogues of Propositions \ref{proposition2} and \ref{proposition2.1} in the cases of $\Sp(2n,\mathds{R})$, $\SO(\J_n,\mathds{R})$ and $\G_2$.

\subsection{$\R/\QQ$-forms of $\Sp(2n,\R)$}

Let $n\geq2$.

\begin{lemma}
\label{Qformssymplectic}
Let $\xi:\Gal(\overline{\mathds{Q}}/\mathds{Q})\rightarrow\Aut(\SL_2(\overline{\mathds{Q}}))$ be a continuous $1$-cocycle such that $\prescript{}{\xi}{\SL_2(\mathds{R})}\simeq\SL(2,\mathds{R})$. Let $\zeta:\Gal(\overline{\mathds{Q}}/\mathds{Q})\rightarrow\Aut(\Sp_{2n}(\overline{\mathds{Q}}))$ be a continuous $1$-cocycle $\tau_{2n}$-compatible with $\xi$. Then
\begin{equation*}
    \prescript{}{\zeta}{\Sp_{2n}(\mathds{Q})}\simeq\SU(\I_{2n},\overline{\phantom{s}};\prescript{}{\xi}{\M_2(\mathds{Q})}).
\end{equation*}
\end{lemma}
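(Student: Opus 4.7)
The plan is to adapt the proof of Proposition \ref{proposition23} to the symplectic setting, where two simplifications arise. First, $\Sp_{2n}$ has no outer automorphisms over a field of characteristic zero, so every $1$-cocycle $\zeta\colon\Gal(\overline{\mathds{Q}}/\mathds{Q})\to\Aut(\Sp_{2n}(\overline{\mathds{Q}}))$ is automatically inner, say $\zeta(\sigma)=\Int(\A_\sigma)$ with $\A_\sigma\in\PSp_{2n}(\overline{\mathds{Q}})$. Second, since $\J_{2n}$ is antisymmetric when $2n$ is even, $\tau_{2n}$ factors through $\Sp_{2n}$, so the $\tau_{2n}$-compatibility condition makes sense. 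Applying Lemma \ref{lemma1} I can write $\xi(\sigma)=\Int(\PP^{-1}\T^{a,b}_\sigma\sigma(\PP))$, and after conjugating I may assume $\PP=\I_2$. The Schur's lemma argument of Proposition \ref{proposition1} then goes through verbatim: the $\tau_{2n}$-compatibility condition forces $\A_\sigma\tau_{2n}(\T^{a,b}_\sigma)^{-1}$ to centralize the Zariski closure of $\tau_{2n}(\prescript{}{\xi}{\SL_2(\mathds{Q})})$, which is the irreducible $\tau_{2n}(\SL_2(\overline{\mathds{Q}}))$, and hence $\A_\sigma=\tau_{2n}(\T^{a,b}_\sigma)$ in $\PSp_{2n}(\overline{\mathds{Q}})$.

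Next I would identify the $\mathds{Q}$-form $\prescript{}{\zeta}{\Sp_{2n}}$. By the classification recalled in Section \ref{sectionarithmeticgroup}, $\mathds{Q}$-forms of $\Sp_{2n}$ are of the shape $\SU(\B,\overline{\phantom{s}};A)$ where $A$ is a quaternion algebra over $\mathds{Q}$ splitting over $\mathds{R}$ and $\B\in\M_n(A)$ satisfies $\overline{\B}^\top=\B$ (the split case $A=\M_2(\mathds{Q})$ giving back $\Sp(2n,\mathds{Q})$). To locate $A$, I compute the image of $[\zeta]$ under the connecting map to the Brauer group exactly as in the proof of Proposition \ref{proposition23}. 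Lifting $\T^{a,b}_\sigma$ to $\M_\sigma\in\GL_2(\overline{\mathds{Q}})$, the $2$-cocycle attached to $\tau_{2n}(\M_\sigma)$ is $a_{\sigma,\tau}^{2n-1}\I_{2n}$, where $a_{\sigma,\tau}=\M_\sigma\sigma(\M_\tau)\M_{\sigma\tau}^{-1}$ is the $2$-cocycle defining $\prescript{}{\xi}{\M_2(\mathds{Q})}$. The determinant trick from Proposition \ref{proposition23} shows $a^2$ is trivial in $\HH^2(\Gal(\overline{\mathds{Q}}/\mathds{Q}),\overline{\mathds{Q}}^*)$; since $2n-1$ is odd, $a^{2n-1}$ is equivalent to $a$, so the quaternion algebra attached to $\zeta$ is $\prescript{}{\xi}{\M_2(\mathds{Q})}$.

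Finally, to pin down the Hermitian form $\B$, I apply Proposition \ref{propHermitianconj}: over a quaternion algebra over $\mathds{Q}$ splitting over $\mathds{R}$, $\overline{\phantom{s}}$-Hermitian forms are classified by their rank. Since $\prescript{}{\zeta}{\Sp_{2n}}$ is a $\mathds{Q}$-form of $\Sp_{2n}$ the rank is $n$, and the form is equivalent to the standard one, yielding $\prescript{}{\zeta}{\Sp_{2n}(\mathds{Q})}\simeq\SU(\I_{2n},\overline{\phantom{s}};\prescript{}{\xi}{\M_2(\mathds{Q})})$. The main obstacle I anticipate is the degenerate case in which $\prescript{}{\xi}{\M_2(\mathds{Q})}$ is isomorphic to $\M_2(\mathds{Q})$ itself, so that Proposition \ref{propHermitianconj} as stated (for division algebras) does not literally apply; there I would pass through the Morita equivalence $\M_n(\M_2(\mathds{Q}))\simeq\M_{2n}(\mathds{Q})$ to reduce the classification of Hermitian forms to the classification of alternating forms over $\mathds{Q}$, where the full-rank form is again unique up to equivalence.
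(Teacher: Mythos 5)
Your proposal is correct and follows essentially the same route as the paper: the paper likewise observes that all automorphisms of $\Sp_{2n}(\overline{\mathds{Q}})$ are inner so that the $\tau_{2n}$-compatible cocycle is unique (via Proposition \ref{proposition1}), identifies the quaternion algebra as $\prescript{}{\xi}{\M_2(\mathds{Q})}$ by the Brauer-class computation of Proposition \ref{proposition23} applied to the even degree $2n$, and normalizes the Hermitian form to $\I_n$ using Proposition \ref{propHermitianconj}. Your explicit treatment of the split case $\prescript{}{\xi}{\M_2(\mathds{Q})}\simeq\M_2(\mathds{Q})$ via Morita equivalence is a detail the paper absorbs into its statement that the forms are either $\Sp_{2n}$ itself or unitary groups over a division quaternion algebra.
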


\begin{proof}
Using the embedding $$\Aut(\Sp_{2n}(\overline{\mathds{Q}}))\simeq\PSp(2n,\overline{\mathds{Q}})\hookrightarrow\Aut(\SL_{2n}(\overline{\mathds{Q}})),$$ we see that $\prescript{}{\zeta}{\Sp_{2n}(\mathds{Q})}<\prescript{}{\zeta}{\SL_{2n}(\mathds{Q})}$.  All $\overline{\mathds{Q}}/\mathds{Q}$-forms of $\Sp_{2n}$ are isomorphic to $\Sp_{2n}$ or to $\HH(K)=\SU(\I_n,\overline{\phantom{s}},A\otimes_{\mathds{Q}}K)$ for any field extension $K$ of $\mathds{Q}$ and with a quaternion division algebra $A$ over $\mathds{Q}$ (since there is only one non-degenerate $\overline{\phantom{s}}$-Hermitian form on $A$, see Proposition \ref{propHermitianconj}). There can only be one $1$-cocycle $\tau_{2n}$-compatible with $\xi$ since all automorphisms of $\Sp_{2n}(\overline{\mathds{Q}})$ are inner (see Proposition \ref{proposition1}). Using Proposition \ref{proposition23} we deduce that $A\simeq\prescript{}{\xi}{\M_2(\mathds{Q})}$.
\end{proof}

\begin{proposition}
Let $\Gamma$ be an arithmetic subgroup of $\SL(2,\mathds{R})$. Suppose $\Gamma$ is commensurable with $\mathcal{O}^1$ where $\mathcal{O}$ is an order of a quaternion division algebra over $\mathds{Q}$.

Then $\tau_{2n}(\Gamma)$ lies in a subgroup of $\Sp(2n,\mathds{R})$ commensurable with a conjugate of $\SU(\I_{n},\overline{\phantom{s}};\mathcal{O})$.

Furthermore this is the only $\mathds{Q}$-arithmetic subgroup of $\Sp(2n,\mathds{R})$ that contains $\tau_n(\Gamma)$ up to commensurability.
\end{proposition}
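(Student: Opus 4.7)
The plan is to mimic closely the proof of Propositions \ref{proposition2} and \ref{proposition2.1}, substituting Lemma \ref{Qformssymplectic} for Proposition \ref{proposition23}. The argument is in fact simpler than in the $\SL_n$ case because every automorphism of $\Sp_{2n}(\overline{\mathds{Q}})$ is inner; consequently, the analogue of Proposition \ref{proposition1} for $\Sp_{2n}$ produces, up to equivalence, a single $\tau_{2n}$-compatible cocycle (the inner one of type (1)), so there is only one candidate $\mathds{Q}$-form for the ambient arithmetic lattice.

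Concretely, I would first choose a $1$-cocycle $\xi:\Gal(\overline{\mathds{Q}}/\mathds{Q})\to\Aut(\SL_2(\overline{\mathds{Q}}))$ such that $\prescript{}{\xi}{\SL_2}(\mathds{Z})$ is commensurable with $\Gamma$ and $\prescript{}{\xi}{\M_2(\mathds{Q})}\simeq A$, where $A$ is the quaternion algebra of which $\mathcal{O}$ is an order. Lemma \ref{lemma1} puts $\xi$ in the normal form $\sigma\mapsto\Int(\PP^{-1}\T^{a,b}_{\sigma}\sigma(\PP))$. I then set $\zeta:\sigma\mapsto\Int(\tau_{2n}(\PP^{-1}\T^{a,b}_{\sigma}\sigma(\PP)))$; because $\J_{2n}$ is antisymmetric (since $2n$ is even), the identity $\tau_n(\M)^{\top}\J_n\tau_n(\M)=\J_n$ of Definition \ref{definition1} forces $\tau_{2n}(\SL_2)\subset\Sp(\J_{2n})$, so $\zeta$ actually takes values in $\Aut(\Sp_{2n}(\overline{\mathds{Q}}))$ and is by construction $\tau_{2n}$-compatible with $\xi$. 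Lemma \ref{Qformssymplectic} then yields $\prescript{}{\zeta}{\Sp_{2n}(\mathds{Q})}\simeq\SU(\I_n,\overline{\phantom{s}};A)$. Since $\tau_{2n}$ is a $\mathds{Q}$-homomorphism between the $\mathds{Q}$-forms $\prescript{}{\xi}{\SL_2}$ and $\prescript{}{\zeta}{\Sp_{2n}}$, it sends $\mathds{Q}$-arithmetic subgroups into $\mathds{Q}$-arithmetic subgroups (Milne, Appendix A, Proposition 5.2, as cited earlier), giving the first assertion after restricting to integer points and using that $\prescript{}{\xi}{\M_2}$ has $\mathcal{O}$ as an order.

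For the uniqueness statement, the Zariski-density argument at the end of the proofs of Propositions \ref{proposition2} and \ref{proposition2.1} transfers verbatim. Given any $\mathds{Q}$-arithmetic subgroup $\Lambda\subset\Sp(2n,\mathds{R})$ containing $\tau_{2n}(\Gamma)$, realize $\Lambda$ via a cocycle $\zeta':\Gal(\overline{\mathds{Q}}/\mathds{Q})\to\Aut(\Sp_{2n}(\overline{\mathds{Q}}))$. For each $\sigma$, the algebraic morphism $\tau_{2n}^{\sigma}=\zeta'(\sigma)\circ\sigma\circ\tau_{2n}\circ(\xi(\sigma)\circ\sigma)^{-1}$ coincides with $\tau_{2n}$ on a finite-index subgroup of $\prescript{}{\xi}{\SL_2(\mathds{Z})}$, and the Zariski density of arithmetic subgroups in $\SL_2(\overline{\mathds{Q}})$ forces $\tau_{2n}^{\sigma}=\tau_{2n}$ on all of $\SL_2(\overline{\mathds{Q}})$. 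Hence $\zeta'$ is $\tau_{2n}$-compatible with $\xi$, and Lemma \ref{Qformssymplectic} identifies $\prescript{}{\zeta'}{\Sp_{2n}(\mathds{Q})}\simeq\SU(\I_n,\overline{\phantom{s}};A)$ as required.

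There is no genuine obstacle here: the delicate Galois-cohomology content lives in Lemma \ref{Qformssymplectic} and in Proposition \ref{proposition1}, and the proof reduces to checking that $\zeta$ built from $\tau_{2n}$ takes values in $\Sp_{2n}$ and then invoking that classification. The only point requiring a moment's care is the identification $\prescript{}{\xi}{\M_2(\mathds{Q})}\simeq A$, which is immediate from the description of $\mathds{Q}$-arithmetic subgroups of $\SL(2,\mathds{R})$ recalled in Section \ref{sectionarithmeticgroup}.
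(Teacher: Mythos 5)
Your proposal is correct and follows essentially the same route as the paper: the forward direction combines Milne's Proposition 5.2 with Lemma \ref{Qformssymplectic} (using that all automorphisms of $\Sp_{2n}(\overline{\mathds{Q}})$ are inner, so there is a unique $\tau_{2n}$-compatible cocycle), and the uniqueness statement is obtained by transferring the Zariski-density argument from the proofs of Propositions \ref{proposition2} and \ref{proposition2.1}. The only difference is that you spell out explicitly the construction of $\zeta$ and the verification that it lands in $\Aut(\Sp_{2n}(\overline{\mathds{Q}}))$ via the antisymmetry of $\J_{2n}$, details the paper leaves implicit.
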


\begin{proof}
Thanks to Proposition 5.2 of Appendix A in Milne \cite{Milne_LieAlgebrasAlgebraicGroupsLieGroups} $\mathds{Q}$-arithmetic subgroups of $\prescript{}{\xi}{\SL_2(\mathds{Q})}$ have image under $\tau_{2n}$ which lies in a $\mathds{Q}$-arithmetic subgroup of $\prescript{}{\zeta}{\Sp_{2n}(\overline{\mathds{Q}})}$ with $\zeta$ a 1-cocycle $\tau_{2n}$-compatible with $\xi$. Lemma \ref{Qformssymplectic} determines in which arithmetic subgroups $\tau_n(\Gamma)$ lie in. They are all arithmetic subgroups of $\Sp(\J_{2n},\mathds{R})$.

The converse is proven the same way as in the proof of Propositions \ref{proposition2} and \ref{proposition2.1}.
\end{proof}

\subsection{$\mathds{R}/\mathds{Q}$-forms of $\SO(k+1,k)$}

Let $n=2k+1\geq3$ be odd. By Theorem 2.8 of \cite{Platonov_AlgebraicgroupsNumbertheory}, $\Aut(\SO(\J_n,\overline{\mathds{Q}}))\simeq\SO(\J_n,\overline{\mathds{Q}})$. Hence isomorphism classes of $\overline{\mathds{Q}}/\mathds{Q}$-forms of $\SO(\J_n)$ are classified by $\HH^1(\Gal(\overline{\mathds{Q}}/\mathds{Q}),\SO(\J_n,\overline{\mathds{Q}}))$, as explained in Section \ref{sectionQformsSL}. Proposition 2.8 in \cite{Platonov_AlgebraicgroupsNumbertheory} states that elements of $\HH^1(\Gal(\overline{\mathds{Q}}/\mathds{Q}),\SO(\J_n,\overline{\mathds{Q}}))$ are in one-to-one correspondence with $\mathds{Q}$-equivalence classes of quadratic forms over $\mathds{Q}^n$ that have discriminant $1$.

The correspondence works as follows. If $\B\in\SL(n,\mathds{Q})$ is a symmetric matrix, then the associated equivalence class of $1$-cocycle is the one defined by the $\overline{\mathds{Q}}/\mathds{Q}$-form $\SO(\B)$. Conversely let $\xi$ is a $1$-cocycle. The embedding $\SO(\J_n,\overline{\mathds{Q}})\hookrightarrow\GL(n,\overline{\mathds{Q}})$ induces a map
\begin{equation*}
    \HH^1(\Gal(\overline{\mathds{Q}}/\mathds{Q}),\SO(\J_n,\overline{\mathds{Q}}))\xrightarrow{\phi}\HH^1(\Gal(\overline{\mathds{Q}}/\mathds{Q}),\GL(n,\overline{\mathds{Q}})).
\end{equation*}
The codomain of $\phi$ is trivial by Hilbert's Theorem 90 (see Lemma 2.2 in \cite{Platonov_AlgebraicgroupsNumbertheory}). Thus there exists $\SSS\in\GL(n,\overline{\mathds{Q}})$ such that for all $\sigma\in\Gal(\overline{\mathds{Q}}/\mathds{Q})$ we have $\phi(\xi)(\sigma)=\SSS^{-1}\sigma(\SSS)$. The symmetric matrix associated to $\xi$ is $\SSS^{-\top}\J_n\SSS^{-1}$ (note that we are using the convention $\SO(\J_n,\overline{\mathds{Q}})=\{\M\in\SL(n,\overline{\mathds{Q}})\ |\ \M^\top\J_n\M=\J_n\}$).

Here is the way to determine the matrix $\SSS$ which comes from an examination of the proof of Hilbert's Theorem 90. Recall that $\xi$ belongs to the direct limit $\varinjlim\HH^1(\Gal(K/\mathds{Q}),\SO(\J_n,K))$, where $K$ runs through all finite Galois extensions of $\mathds{Q}$. Hence there exists such a $K$ with $\xi\in\HH^1(\Gal(K/\mathds{Q}),\SO(\J_n,K))$. Define
\begin{equation*}
    f:K^n\to K^n,\ x\mapsto\sum_{\sigma\in\Gal(K/\mathds{Q})}\xi(\sigma)\sigma(x).
\end{equation*}
Pick $v_1,...,v_n\in K^n$ such that $(f(v_1),...,f(v_n))$ is a basis of $K^n$. Then $\SSS=(f(v_1)|...|f(v_n))^{-1}$.

Quadratic forms over $\mathds{Q}$ are classified up to equivalence by global and local invariants. The main reference for this subject is Serre's book \cite{Serre_CourseArithmetic}. If $q(x)=\sum a_ix_i^2$ is a non-degenerate quadratic form  over $\mathds{Q}$ and $p$ is a prime number, define its \emph{Hasse invariant in $\mathds{Q}_p$} as
\begin{equation*}
    \mathcal{E}_p(q)=\bigotimes_{i<j}(a_i,a_j)_{\mathds{Q}_p}\in\Br(\mathds{Q}_p).
\end{equation*}
The Hasse-Minkowski Theorem (Theorem 9 in \cite{Serre_CourseArithmetic}) now states that two quadratic forms over $\mathds{Q}$ are equivalent if and only if they have the same discriminant, the same signature and the same Hasse invariant in $\mathds{Q}_p$ for all primes $p$.

We can compute the invariants of $\J_n$. Its discriminant is a square. Its signature is $(k+1,k)$ if $n\equiv1[4]$ and $(k,k+1)$ if $n\equiv3[4]$. Using the fact that $$\mathcal{E}(q\perp q')=\mathcal{E}_p(q)\otimes\mathcal{E}_p(q')\otimes(\disc(q),\disc(q'))_{\mathds{Q}_p}$$ for all primes $p$, we can show that $\mathcal{E}_p(\J_n)=1$ for $p\neq2$ and that $\mathcal{E}_2(\J_n)=1$ if $n\equiv\pm1[8]$ and $\mathcal{E}_2(\J_n)=(-1,-1)_{\mathds{Q}_2}$ if $n\equiv\pm3[8]$.

\begin{lemma}
\label{Qformsorthogonal}
Let $\xi:\Gal(\overline{\mathds{Q}}/\mathds{Q})\to\Aut(\SL_2(\overline{\mathds{Q}}))$ be a continuous $1$-cocycle such that $\xi(\sigma)=\Int(\PP^{-1}\T_{\sigma}^{a,b}\sigma(\PP))$ for all $\sigma$ with $a,b\in\mathds{N}$. Let $n=2k+1\geq3$ be odd and $\zeta:\Gal(\overline{\mathds{Q}}/\mathds{Q})\to\Aut(\SO(\J_n,\overline{\mathds{Q}}))$ be a continuous $1$-cocycle $\tau_n$-compatible with $\xi$. Then
\begin{equation*}
    \prescript{}{\zeta}{\SO(\J_n)(\mathds{Q})}\simeq\left\{\begin{array}{ll}
        \SO(\J_n,\mathds{Q}) & \mbox{if $n\equiv\pm1[8]$} \\
        \SO(\Q,\mathds{Q}) & \mbox{if $n\equiv\pm3[8]$}
    \end{array}\right.
\end{equation*}
where $\Q\in\SL(n,\mathds{Q})$ is a symmetric matrix of signature equal to the signature of $\J_n$ and of Hasse invariant $\mathcal{E}_p(\Q)=(a,b)_{\mathds{Q}_p}\otimes(-1,-1)_{\mathds{Q}_p}$ for all primes $p$.
\end{lemma}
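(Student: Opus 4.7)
The plan is to parallel the proofs of Propositions~\ref{proposition1} and~\ref{proposition23} in the orthogonal setting, and then convert the resulting cocycle into the desired quadratic form via the Hilbert~90 recipe recalled before the lemma.

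\emph{Step 1 (Classifying $\zeta$).} For $n$ odd the group $\SO(\J_n,\overline{\mathds{Q}})$ is centerless and $\Aut(\SO(\J_n,\overline{\mathds{Q}}))=\Inn(\SO(\J_n,\overline{\mathds{Q}}))$, so $\zeta$ is uniquely represented by a $1$-cocycle valued in $\SO(\J_n,\overline{\mathds{Q}})$. Composed with the embedding $\SO(\J_n)\hookrightarrow\SL_n$ this gives an inner cocycle in $\Aut(\SL_n(\overline{\mathds{Q}}))$, so only case~(1) of Proposition~\ref{proposition1} can occur. After conjugation $\zeta(\sigma)=\Int(\tau_n(\T^{a,b}_\sigma))$; using $\tau_n(\M)^\top\J_n\tau_n(\M)=(\det \M)^{n-1}\J_n=\J_n$ for $\M\in\GL_2$ with $\det\M=\pm 1$, one checks that $\tau_n(\T^{a,b}_\sigma)\in\O(\J_n,\overline{\mathds{Q}})$, and its unique lift to $\SO(\J_n,\overline{\mathds{Q}})$ is
\[ \widetilde{\zeta}(\sigma)=\epsilon_\sigma\,\tau_n(\T^{a,b}_\sigma),\qquad \epsilon_\sigma=\det\tau_n(\T^{a,b}_\sigma)=(\det\T^{a,b}_\sigma)^{n(n-1)/2}\in\{\pm 1\}.\]
The cocycle identity for $\widetilde{\zeta}$ follows from multiplicativity of $\det$ together with $\tau_n(-\I_2)=\I_n$ (valid because $n-1$ is even).

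\emph{Step 2 (Producing $\Q$).} The Hilbert~90 averaging construction from the paragraph preceding the lemma, applied over $K=\mathds{Q}(\sqrt a,\sqrt b)$, produces $\SSS\in\GL_n(K)$ with $\SSS^{-1}\sigma(\SSS)=\widetilde{\zeta}(\sigma)$. A judicious choice of basis vectors, one in each of the four $\Gal(K/\mathds{Q})$-isotypic components $1,\sqrt a,\sqrt b,\sqrt{ab}$ of $K$, makes $\det(\SSS)$ a rational multiple of a power of $\sqrt{ab}$, so that $\Q=\SSS^{-\top}\J_n\SSS^{-1}$ has rational coefficients and, after rescaling, lies in $\SL_n(\mathds{Q})$.

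\emph{Step 3 (Invariants of $\Q$).} Because $a,b>0$, complex conjugation $c\in\Gal(\mathds{C}/\mathds{R})$ fixes $\sqrt a$ and $\sqrt b$, so $\widetilde{\zeta}(c)=\I_n$ is trivial; hence $\prescript{}{\zeta}{\SO(\J_n)}(\mathds{R})\simeq\SO(\J_n,\mathds{R})$ and the signature of $\Q$ matches that of $\J_n$. The discriminant of $\Q$ is a square because $\widetilde{\zeta}$ takes values in $\SO$. By Hasse--Minkowski it remains to compare the local Hasse invariants. I would do this through the connecting map
\[\HH^1(\Gal(\overline{\mathds{Q}}/\mathds{Q}),\SO(\J_n,\overline{\mathds{Q}}))\longrightarrow\HH^2(\Gal(\overline{\mathds{Q}}/\mathds{Q}),\mu_2)\subset\Br(\mathds{Q})[2]\]
induced by the double cover of $\SO(\J_n)$: the image of $\widetilde{\zeta}$ records the Hasse--Witt difference $\mathcal{E}_p(\Q)\,\mathcal{E}_p(\J_n)^{-1}$ at each prime $p$. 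Because $\SL_2$ is simply connected, $\tau_n$ lifts to the spin cover, and this image equals the pullback along $\tau_n$ of the Brauer class of $\prescript{}{\xi}{\M_2(\mathds{Q})}=(a,b)_{\mathds{Q}}$ (Proposition~\ref{propositionBrauer}). A combinatorial parity calculation on $\tau_n$ yields $(a,b)_{\mathds{Q}_p}^{\varepsilon(n)}$ with $\varepsilon(n)\equiv k(k+1)/2\pmod 2$, which equals $0$ exactly when $n\equiv\pm 1\pmod 8$ and $1$ exactly when $n\equiv\pm 3\pmod 8$. Combined with the values of $\mathcal{E}_p(\J_n)$ recalled before the lemma, this gives the two stated cases.

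The main obstacle is the parity calculation for $\varepsilon(n)$: one must justify the identity $\varepsilon(n)\equiv k(k+1)/2\pmod 2$ and match it with the $n\bmod 8$ dichotomy in a way compatible with the Brauer-class computation. A purely hands-on alternative is to execute Step~2 explicitly and then diagonalise $\Q$; in the case $n=3$ this recovers $\Q\sim\langle -a,-b,ab\rangle$, the trace-zero norm form of $(a,b)_{\mathds{Q}}$, whose Hasse invariant is readily computed via Hilbert-symbol manipulations to be $(a,b)_{\mathds{Q}_p}(-1,-1)_{\mathds{Q}_p}$, matching the statement, and the general odd $n$ follows by the same method once the pattern is isolated.
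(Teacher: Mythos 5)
Your overall architecture (classify $\zeta$ via Proposition \ref{proposition1}, realize it by the Hilbert~90 averaging over $K=\mathds{Q}(\sqrt a,\sqrt b)$, then pin down $\Q$ by Hasse--Minkowski) is exactly the paper's, and your Step~1 is correct: for $n$ odd all automorphisms of $\SO(\J_n,\overline{\mathds{Q}})$ are inner, only the inner case of Proposition \ref{proposition1} survives, and the lift $\tau_n(\Tilde\T^{a,b}_\sigma)$ with $\Tilde\T^{a,b}_\sigma\in\SL_2(\overline{\mathds{Q}})$ lands in $\SO(\J_n,\overline{\mathds{Q}})$. The problem is that everything after that is a plan rather than a proof. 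The entire content of the lemma is the determination of $\mathcal{E}_p(\Q)$ as a function of $n\bmod 8$, and you establish it by neither of your two proposed routes. The paper's proof consists precisely of executing your Step~2 in full: it writes down explicit vectors $v_i$, separately for $[K:\mathds{Q}]=1,2,4$ and for $n\equiv 1$ or $3\pmod 4$, obtains an explicit diagonal matrix $\SSS^{-\top}\J_n\SSS^{-1}$ in each case, and computes its Hasse invariant by Hilbert-symbol manipulations (using, e.g., that $\prod_{j=1,3,\dots}j(n-j)$ is a square). Your phrase ``a judicious choice of basis vectors'' elides all of this, and your hands-on fallback is only carried out for $n=3$, with ``the general odd $n$ follows by the same method once the pattern is isolated'' standing in for the four-case computation that is the bulk of the argument.

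Your Step~3 is a genuinely different and attractive idea --- reading off $\mathcal{E}_p(\Q)\,\mathcal{E}_p(\J_n)^{-1}$ from the connecting map $\HH^1(\Gal(\overline{\mathds{Q}}/\mathds{Q}),\SO(\J_n,\overline{\mathds{Q}}))\to\HH^2(\Gal(\overline{\mathds{Q}}/\mathds{Q}),\mu_2)$ would replace the paper's case analysis by a single parity. But as written it has two unproved inputs. First, the identification of $\delta(\zeta)$ with the difference of Hasse--Witt invariants is a theorem (Fr\"ohlich/Springer) whose general form carries correction terms involving the discriminant and spinor norm; you need to check these vanish here (plausible since both discriminants are squares, but it must be said). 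Second, and as you yourself flag, the class $\delta(\zeta)$ equals $(a,b)_{\mathds{Q}}^{\varepsilon(n)}$ where $\varepsilon(n)$ records whether the lift $\widetilde{\tau_n}:\SL_2\to\mathrm{Spin}(\J_n)$ kills $-\I_2$; the asserted identity $\varepsilon(n)\equiv k(k+1)/2\pmod 2$ is exactly the point of the lemma and is nowhere justified. Until either that parity computation or the explicit diagonalization for general odd $n$ is supplied, the proof is incomplete.
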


\begin{proof}
Using $$\Aut(\SO(\J_n,\overline{\mathds{Q}}))\simeq\SO(\J_n,\overline{\mathds{Q}})\xhookrightarrow{i}\PSL(n,\overline{\mathds{Q}})$$ we consider $i\circ\zeta$ as $1$-cocycle with value in $\Aut(\SL_n(\overline{\mathds{Q}}))$ which is $\tau_n$-compatible with $\xi$. Proposition \ref{proposition1} implies that for all $\sigma$ $$i(\zeta(\sigma))=\Int(\tau_n(\PP^{-1}\T_{\sigma}^{a,b}\sigma(\PP))).$$ Up to conjugation, we can assume that $\PP=\I_2$.

Let $K=\mathds{Q}(\sqrt{a},\sqrt{b})$. Then $\zeta\in\HH^1(\Gal(K/\mathds{Q}),\SO(\J_n,K))$. We define $$f:K^n\to K^n,\ x\mapsto\sum_{\sigma\in\Gal(K/\mathds{Q})}\T_{\sigma}^{a,b}\sigma(x).$$ If $K$ is a degree $2$ extension of $\mathds{Q}$ let $\sigma\in\Gal(K/\mathds{Q})$ be the non-trivial element. If $K$ is a degree $4$ extension of $\mathds{Q}$, let $\sigma\in\Gal(K/\mathds{Q})$ be such that $\sigma(\sqrt{a})=-\sqrt{a}$ and $\sigma(\sqrt{b})=\sqrt{b}$ and let $\tau\in\Gal(K/\mathds{Q})$ be such that $\tau(\sqrt{a})=\sqrt{a}$ and $\tau(\sqrt{b})=-\sqrt{b}$. We note $(e_1,...,e_n)$ the canonical basis of $K^n$.

Assume that $n\equiv1[4]$. If $K=\mathds{Q}$ then the symmetric matrix associated to $\zeta$ is $\J_n$. We suppose now that $K$ is a degree $2$ extension of $\mathds{Q}$. If $a$ or $b$ is a square, then $(a,b)_{\mathds{Q}}=1$ and up to change of basis we can assume that both of them are squares. This has been treated in the previous case. Suppose that $a$ and $b$ are not squares. We have
$$f:K^n\to K^n,\ x\mapsto x+\begin{psmallmatrix}
&&&1\\
&&-1&\\
&...&&\\
1&&&
\end{psmallmatrix}\sigma(x).$$ We then define for all $1\leq i\leq\frac{k}{2}$
$$v_{2i-1}=\frac{e_{2i-1}+e_{2k-2i+3}}{2},\ v_{2i}=\frac{e_{2i}-e_{2k-2i+2}}{2},$$ $$v_{2k-2i+2}=\frac{\sqrt{a}e_{2i}+\sqrt{a}e_{2k-2i+2}}{2},\ v_{2k-2i+3}=\frac{\sqrt{a}e_{2i-1}-\sqrt{a}e_{2k-2i+3}}{2}$$ and $v_{k+1}=\frac{e_{k+1}}{2}$.
Let
$$\SSS^{-1}=(f(v_1),...,f(v_n))=\begin{pmatrix}
1&&&&&&\sqrt{a}\\
&1&&&&\sqrt{a}&\\
&&...&&...&&\\
&&&1&&&\\
&&...&&...&&\\
&-1&&&&\sqrt{a}&\\
1&&&&&&-\sqrt{a}
\end{pmatrix}.$$
The symmetric matrix associated to $\zeta$ is $\SSS^{-\top}\J_n\SSS^{-1}=$
$$\begin{pmatrix}2(n-1)!&&&&&&\\
&2(n-2)!&&&&\\
&&...&&&&\\
&&&k!k!&&&\\
&&&&...&&\\
&&&&&-2a(n-2)!&\\
&&&&&&-2a(n-1)!
\end{pmatrix}$$
which has Hasse invariant for any prime $p$
$$(-1,-1)_{\mathds{Q}_p}^{\frac{n-1}{4}}\otimes\bigotimes_{j=1,3,...,k-1}(a,-j(n-j))_{\mathds{Q}_p}\simeq(-1,-1)_{\mathds{Q}_p}^{\frac{n-1}{4}}\otimes(a,(-1)^{\frac{n-1}{4}})_{\mathds{Q}_p}$$
since $\prod_{j=1,3,...,k-1}j(n-j)$ is a square as can be shown by induction. Thus if $n\equiv1[8]$ then $\SSS^{-\top}\J_n\SSS^{-1}$ is equivalent to $\J_n$. If $n\equiv-3[8]$ then $\SSS^{-\top}\J_n\SSS^{-1}$ has Hasse invariant $(-1,-1)_{\mathds{Q}_p}\otimes(a,a)_{\mathds{Q}_p}$ for all primes $p$.

We suppose that $K$ is a degree $4$ extension of $\mathds{Q}$. Then $f:K^n\to K^n$
$$x\mapsto x+\begin{psmallmatrix}
1&&&\\
&-1&&\\
&&...&\\
&&&1
\end{psmallmatrix}\tau(x)+\begin{psmallmatrix}
&&&1\\
&&1&\\
&...&&\\
1&&&
\end{psmallmatrix}\sigma(x)+\begin{psmallmatrix}
&&&1\\
&&-1&\\
&...&&\\
1&&&\\
\end{psmallmatrix}\sigma(\tau(x)).$$
For all $1\leq i\leq \frac{k}{2}$ we let
$$v_{2i-1}=\frac{\sqrt{a}e_{2i-1}}{2},\ v_{2i}=\frac{\sqrt{b}e_{2i}}{2},$$
$$v_{2k-2i+2}=-\frac{\sqrt{ab}}{2}e_{2k-2i+2},\ v_{2k-2i+3}=\frac{e_{2k-2i+3}}{2}$$
and $v_{k+1}=\frac{e_{k+1}}{4}$. Let
$$\SSS^{-1}=(f(v_1),...,f(v_n))=\begin{pmatrix}
\sqrt{a}&&&&&&1\\
&\sqrt{b}&&&&\sqrt{ab}&\\
&&...&&...&&\\
&&&1&&&\\
&&...&&...&&\\
&\sqrt{b}&&&&-\sqrt{ab}&\\
-\sqrt{a}&&&&&&1
\end{pmatrix}.$$ The symmetric matrix associated to $\zeta$ is $\SSS^{-\top}\J_n\SSS^{-1}=$
$$\begin{pmatrix}
-2a(n-1)!&&&&&&\\
&-2b(n-2)!&&&&&\\
&&...&&&&\\
&&&k!k!&&&\\
&&&&...&&\\
&&&&&2ab(n-2)!&\\
&&&&&&2(n-1)!
\end{pmatrix}$$
which has Hasse invariant
$$(-1,-1)^{\frac{n-1}{4}}_{\mathds{Q}_p}\otimes\bigotimes_{j=1,3,...,k-1}(a,bj(n-j))_{\mathds{Q}_p}\simeq(-1,-1)^{\frac{n-1}{4}}_{\mathds{Q}_p}\otimes(a,b^{\frac{n-1}{4}})_{\mathds{Q}_p}.$$ Thus if $n\equiv1[8]$, $\SSS^{-\top}\J_n\SSS^{-1}$ is equivalent to $\J_n$. If $n\equiv-3[8]$, $\SSS^{-\top}\J_n\SSS^{-1}$ has Hasse invariant $(-1,-1)_{\mathds{Q}_p}\otimes(a,b)_{\mathds{Q}_p}$ for all primes $p$.

Assume that $n\equiv3[4]$. If $K=\mathds{Q}$, the symmetric matrix associated to $\zeta$ is $\J_n$.
We now suppose that $K$ is a degree 2 extension of $\mathds{Q}$. If $a$ or $b$ is a square then $(a,b)_{\mathds{Q}}=1$ and up to change of basis we can assume that both of them are squares. This has been treated in the previous case. Assume that none of them is a square. We have
$$f:K^n\to K^n,\ x\mapsto x+\begin{psmallmatrix}
&&&1\\
&&-1&\\
&...&&\\
1&&&\\
\end{psmallmatrix}\sigma(x).$$ We then define for all $1\leq i\leq \frac{k+1}{2}$
$$v_{2i-1}=e_{2i-1},\ v_{2i}=e_{2i},$$
$$v_{2k-2i+2}=\sqrt{a}e_{2k-2i+2},\ v_{2k-2i+3}=-\sqrt{a}e_{2k-2i+3}$$ and $v_{k+1}=\frac{\sqrt{a}e_{k+1}}{2}$. Let
$$\SSS^{-1}=(f(v_1),...,f(v_n))=\begin{pmatrix}
1&&&&&&\sqrt{a}\\
&1&&&&\sqrt{a}&\\
&&...&&...&&\\
&&&\sqrt{a}&&&\\
&&...&&...&&\\
&-1&&&&\sqrt{a}&\\
1&&&&&&-\sqrt{a}
\end{pmatrix}.$$
The symmetric matrix associated to $\zeta$ is $\SSS^{-\top}\J_n\SSS^{-1}=$
$$\begin{pmatrix}
2(n-1)!&&&&&&\\
&2(n-2)!&&&&&\\
&&...&&&&\\
&&&-ak!k!&&&\\
&&&&...&&\\
&&&&&-2a(n-2)!&\\
&&&&&&-2a(n-1)!
\end{pmatrix}$$
which has Hasse invariant
\begin{align*}&(-1,-1)_{\mathds{Q}_p}^{\frac{n+1}{4}}\otimes(a,-2a)_{\mathds{Q}_p}\otimes\bigotimes_{j=1,3,...,k}(a,-j(n-j))_{\mathds{Q}_p}\\
&\simeq(-1,-1)^{\frac{n+1}{4}}_{\mathds{Q}_p}\otimes(a,(-1)^{\frac{n-3}{4}}a)_{\mathds{Q}_p}\end{align*} for all primes $p$ since $2\prod_{j=1,3,...,k}j(n-j)$ is a square as can be shown by induction. Thus if $n\equiv-1[8]$ then $\SSS^{-\top}\J_n\SSS^{-1}$ is equivalent to $\J_n$. If $n\equiv3[8]$ then $\SSS^{-\top}\J_n\SSS^{-1}$ has Hasse invariant $(-1,-1)_{\mathds{Q}_p}\otimes(a,a)_{\mathds{Q}_p}.$

We suppose that $K$ is a degree $4$ extension of $\mathds{Q}$. Then $f:K^n\to K^n$ $$x\mapsto x+\begin{psmallmatrix}
-1&&&\\
&1&&\\
&&...&\\
&&&-1
\end{psmallmatrix}\tau(x)+\begin{psmallmatrix}
&&&-1\\
&&-1&\\
&...&&\\
-1&&&
\end{psmallmatrix}\sigma(x)+\begin{psmallmatrix}
&&&1\\
&&-1&\\
&...&&\\
1&&&
\end{psmallmatrix}\sigma(\tau(x)).$$
We then define for all $1\leq i\leq \frac{k+1}{2}$
$$v_{2i-1}=\frac{\sqrt{b}e_{2i-1}}{2},\ v_{2i}=\frac{e_{2i}}{2},$$
$$v_{2k-2i+2}=\frac{\sqrt{a}e_{2k-2i+2}}{2},\ v_{2k-2i+3}=\frac{\sqrt{ab}e_{2k-2i+3}}{2}$$ and $v_{k+1}=\frac{\sqrt{a}e_{k+1}}{4}.$
Let
$$\SSS^{-1}=(f(v_1),...,f(v_n))=\begin{pmatrix}
\sqrt{b}&&&&&&\sqrt{ab}\\
&1&&&&\sqrt{a}&\\
&&...&&...&&\\
&&&\sqrt{a}&&&\\
&&...&&...&&\\
&-1&&&&\sqrt{a}&\\
-\sqrt{b}&&&&&&\sqrt{ab}
\end{pmatrix}.$$
The symmetric matrix associated to $\zeta$ is $\SSS^{-\top}\J_n\SSS^{-1}=$
$$\begin{pmatrix}
-2b(n-1)!&&&&&&\\
&2(n-2)!&&&&&\\
&&...&&&&\\
&&&-ak!k!&&&\\
&&&&...&&\\
&&&&&-2a(n-2)!&\\
&&&&&&2ab(n-1)!
\end{pmatrix}$$
which has Hasse invariant
$$(-1,-1)_{\mathds{Q}_p}^{\frac{n+1}{4}}\otimes(a,2)_{\mathds{Q}_p}\otimes\bigotimes_{j=1,3,...,k}(a,bj(n-j))_{\mathds{Q}_p}\simeq(-1,-1)_{\mathds{Q}_p}^{\frac{n+1}{4}}\otimes(a,b^{\frac{n+1}{4}})_{\mathds{Q}_p}$$ for all primes $p$. Thus if $n\equiv-1[8]$, $\SSS^{-\top}\J_n\SSS^{-1}$ is equivalent to $\J_n$. If $n\equiv3[8]$, $\SSS^{-\top}\J_n\SSS^{-1}$ has for Hasse invariant $(-1,-1)_{\mathds{Q}_p}\otimes(a,b)_{\mathds{Q}_p}$.
\end{proof}

\begin{proposition}
Let $\Gamma$ be a $\mathds{Q}$-arithmetic subgroup of $\SL(2,\mathds{R})$ and $n=2k+1\geq3$. Suppose that $\Gamma$ is commensurable with the norm $1$ elements of an order of a quaternion algebra $(a,b)_{\mathds{Q}}$.

If $n\equiv\pm1[8]$, then $\tau_n(\Gamma)$ lies in a subgroup of $\SO(\J_n,\mathds{R})$ commensurable with a conjugate of $\SO(\J_n,\mathds{Z})$.

If $n\equiv\pm3[8]$, then $\tau_n(\Gamma)$ lies in a subgroup of $\SO(\J_n,\mathds{R})$ commensurable with a conjugate of $\SO(\Q,\mathds{Z})$ for $\Q\in\SL(n,\mathds{Q})$ a symmetric matrix of signature equal to the signature of $\J_n$ and of Hasse invariant $\mathcal{E}_p(\Q)=(a,b)_{\mathds{Q}_p}\otimes(-1,-1)_{\mathds{Q}_p}$ for every prime $p$.

Furthermore these are the only $\mathds{Q}$-arithmetic subgroups of $\SO(\J_n,\mathds{R})$ that contain $\tau_n(\Gamma)$ up to commensurability.
\end{proposition}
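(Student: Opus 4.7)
The plan is to follow exactly the template used for Propositions \ref{proposition2} and \ref{proposition2.1}, with Lemma \ref{Qformsorthogonal} now playing the role that Proposition \ref{proposition23} did there. The substantive calculations---identifying which orthogonal $\mathds{Q}$-form arises from a $\tau_n$-compatible cocycle, and computing its Hasse invariants modulo $8$---have already been carried out inside Lemma \ref{Qformsorthogonal}, so the remaining work is essentially formal.

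For the forward direction, first write $\Gamma$ as commensurable with $\prescript{}{\xi}{\SL_2}(\mathds{Z})$ for a continuous cocycle $\xi:\Gal(\overline{\mathds{Q}}/\mathds{Q})\to\Aut(\SL_2(\overline{\mathds{Q}}))$ whose associated quaternion algebra $\prescript{}{\xi}{\M_2}(\mathds{Q})$ is $(a,b)_\mathds{Q}$; by Lemma \ref{lemma1}, up to conjugation $\xi(\sigma)=\Int(\T^{a,b}_\sigma)$. Since $\tau_n(\SL_2(\overline{\mathds{Q}}))\subset\SO(\J_n,\overline{\mathds{Q}})$ by Definition \ref{definition1}, $\tau_n$ descends to a $\mathds{Q}$-homomorphism $\prescript{}{\xi}{\SL_2}\to\prescript{}{\zeta}{\SO(\J_n)}$ for a $\tau_n$-compatible cocycle $\zeta$ (the existence of $\zeta$ is read off from the explicit formula $\zeta(\sigma)=\tau_n(\T^{a,b}_\sigma)$, which is well-defined since $\tau_n$ preserves $\J_n$). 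Milne's Proposition 5.2 of Appendix A in \cite{Milne_LieAlgebrasAlgebraicGroupsLieGroups} then guarantees that $\tau_n(\Gamma)$ lies in a $\mathds{Q}$-arithmetic subgroup of $\prescript{}{\zeta}{\SO(\J_n)}(\mathds{Q})$. Lemma \ref{Qformsorthogonal} identifies this $\mathds{Q}$-form as $\SO(\J_n,\mathds{Q})$ when $n\equiv\pm1[8]$ and $\SO(\Q,\mathds{Q})$ with the stated Hasse invariants when $n\equiv\pm3[8]$, and the $\mathds{Z}$-points are commensurable with $\SO(\J_n,\mathds{Z})$ or $\SO(\Q,\mathds{Z})$ respectively.

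For the converse (minimality) direction, let $\Lambda$ be any $\mathds{Q}$-arithmetic subgroup of $\SO(\J_n,\mathds{R})$ containing $\tau_n(\Gamma)$, corresponding to some cocycle $\zeta':\Gal(\overline{\mathds{Q}}/\mathds{Q})\to\Aut(\SO(\J_n)(\overline{\mathds{Q}}))$. The goal is to show $\zeta'$ is $\tau_n$-compatible with $\xi$, after which Lemma \ref{Qformsorthogonal} completes the classification. As in the proofs of Propositions \ref{proposition2} and \ref{proposition2.1}, I would form for each $\sigma$ the algebraic morphism
\[
\tau_n^\sigma:\SL_2(\overline{\mathds{Q}})\to\SO(\J_n,\overline{\mathds{Q}}),\qquad g\mapsto\zeta'(\sigma)\circ\sigma\circ\tau_n\circ(\xi(\sigma)\circ\sigma)^{-1}(g),
\]
which coincides with $\tau_n$ on a finite-index subgroup of $\prescript{}{\xi}{\SL_2}(\mathds{Z})$ (because $\tau_n(\Gamma)\subset\Lambda$ is compatible with the defining equations of both $\mathds{Q}$-forms). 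Borel density forces the locus $\{\tau_n=\tau_n^\sigma\}$ to be all of $\SL_2(\overline{\mathds{Q}})$, and evaluating on $\mathds{Q}$-points gives $\tau_n(\prescript{}{\xi}{\SL_2}(\mathds{Q}))\subset\prescript{}{\zeta'}{\SO(\J_n)}(\mathds{Q})$, i.e.\ $\tau_n$-compatibility.

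The main obstacle has already been dispatched inside Lemma \ref{Qformsorthogonal}, which required the case-by-case Hasse-invariant computations over the extensions $\mathds{Q}$, $\mathds{Q}(\sqrt{a})$, and $\mathds{Q}(\sqrt{a},\sqrt{b})$; by contrast, the present proof only has to recognize these computations as classifying the output form, glue them with Milne's transfer principle for $\mathds{Q}$-morphisms, and apply the standard Zariski-density argument for the converse. The only mild subtlety is verifying that when $n\equiv\pm1[8]$ the symmetric matrix produced by Lemma \ref{Qformsorthogonal} is genuinely $\mathds{Q}$-equivalent to $\J_n$ (same signature, trivial Hasse invariants, square discriminant), so that $\SO(\J_n,\mathds{Z})$ really appears on the list---this is immediate from Hasse--Minkowski.
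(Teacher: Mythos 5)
Your proposal is correct and follows essentially the same route as the paper: the forward direction is Milne's transfer principle for $\mathds{Q}$-morphisms combined with Lemma \ref{Qformsorthogonal} to identify the target form, and the converse is the same Zariski-density argument used for Propositions \ref{proposition2} and \ref{proposition2.1}. The paper's own proof is just a terser version of what you wrote, citing those same ingredients.
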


\begin{proof}
Thanks to Proposition 5.2. in Milne \cite{Milne_LieAlgebrasAlgebraicGroupsLieGroups} $\mathds{Q}$-arithmetic subgroups of $\prescript{}{\xi}{\SL_2(\mathds{Q})}$ have image under $\tau_n$ which lies in a $\mathds{Q}$-arithmetic subgroup of $\prescript{}{\zeta}{\SO(\J_n,\overline{\mathds{Q}})}$ with $\zeta$ a 1-cocycle $\tau_n$-compatible with $\xi$. Lemma \ref{Qformsorthogonal} determines in which $\mathds{Q}$-arithmetic subgroups $\tau_n(\Gamma)$ lie in. They are all arithmetic subgroups of $\SO(\J_n,\mathds{R})$.

The converse is proven the same way as in the proof of Proposition \ref{proposition2}.
\end{proof}

\subsection{$\mathds{R}/\mathds{Q}$-forms of $\G_2$}

We now give the corresponding result for $\G_2$.

\begin{definition}
An \emph{octonion algebra} over a field $F$ of characteristic not equal to $2$ is a unital but non associative algebra of dimension $8$ over $F$ such that there exists a nondegenerate quadratic form $N$ on the algebra satisfying $N(xy)=N(x)N(y)$ for all $x,y$. We say that the octonion algebra is \emph{split} if $N$ is isotropic.
\end{definition}

\begin{definition}
\label{definitionG2}
Let $R$ be a ring. Denote by $\times:R^7\times R^7\rightarrow R^7$ the map
\begin{equation*}
    \begin{pmatrix}
    x_1\\
    x_2\\
    x_3\\
    x_4\\
    x_5\\
    x_6\\
    x_7
    \end{pmatrix},
    \begin{pmatrix}
    y_1\\
    y_2\\
    y_3\\
    y_4\\
    y_5\\
    y_6\\
    y_7
    \end{pmatrix}\mapsto
    \begin{pmatrix}
    6(x_1y_4-x_4y_1)-4(x_2y_3-x_3y_2)\\
    24(x_1y_5-x_5y_1)-6(x_2y_4-x_4y_2)\\
    60(x_1y_6-x_6y_1)-6(x_3y_4-x_4y_3)\\
    120(x_1y_7-x_7y_1)+20(x_2y_6-x_6y_2)-8(x_3y_5-x_5y_3)\\
    60(x_2y_7-x_7y_2)-6(x_4y_5-x_5y_4)\\
    24(x_3y_7-x_7y_3)-6(x_4y_6-x_6y_4)\\
    6(x_4y_7-x_7y_4)-4(x_5y_6-x_6y_5)
    \end{pmatrix}
\end{equation*}
Define $\textbf{G}_2(R)=\{\M\in\SO(\J_7,R)|\M(x\times y)=\M x\times\M y,\ \forall x,y\in R^7\}$.
\end{definition}

It appears that $\textbf{G}_2(\mathds{R})$ is a Lie group isomorphic to the connected centerless real split form of type $\G_2$. To prove this, we should show that $\textbf{G}_2(\mathds{R})$ is the automorphism group of the unique split octonion algebra over $\mathds{R}$ (see Theorem 1.8.1 in Springer and Veldkamp's book \cite{Springer_OctonionJordanAlgebrasExceptionalGroups}). Let $\mathds{O}=\mathds{R}\oplus\mathds{R}^7$. We define the product on $\mathds{O}$ by
$$(t,v)(s,w)=(ts-v^\top\J_7w,tw+sv+v\times w)$$ for all $t,s\in\mathds{R}$ and $v,w\in\mathds{R}^7$. With this product, $\mathds{O}$ has a structure of a unital algebra over $\mathds{R}$. Denote by $N:\mathds{O}\rightarrow\mathds{R}$ the map
$$N(t,v)=t^2+v^\top\J_7 v.$$ This is a nondegenerate quadratic form on $\mathds{O}$.
We can check that  $$v^\top \J_7(v\times w)=0$$ and $$(v\times w)^\top\J_7(v\times w)=(v^\top\J_7v)(w^\top\J_7w)-(v^\top\J_7w)^2$$ for all $v,w\in\mathds{R}^7$. Those two properties together imply that $N(xy)=N(x)N(y)$. Since $N$ is isotropic, $\mathds{O}$ is isomorphic to the unique split octonion algebra over $\mathds{R}$.

Let us show that $\textbf{G}_2(\mathds{R})\simeq\Aut(\mathds{O})$ to conclude that $\textbf{G}_2(\mathds{R})$ is a Lie group of type $\G_2$.
Pick $g\in\textbf{G}_2(\mathds{R})$ and define its action on $\mathds{O}$ by $g.(t,v)=(t,g.v)$ for all $(t,v)\in\mathds{O}$. Then for all $x,y\in\mathds{O}$ $g(xy)=g(x)g(y)$. Thus $g$ defines an automorphism of $\mathds{O}$. Suppose $\phi\in\Aut(\mathds{O})$. Then $\phi(1,0)=(1,0)$ and thus $\phi$ fixes $\mathds{R}\oplus\{0\}$. Since $$N(t,v)=(t,v)(t,-v)$$ $N(\phi(x))=N(x)$ for all $x\in\mathds{O}$. This implies that $\phi$ preserves the orthogonal of $\mathds{R}\oplus\{0\}$ for $N$, i.e. $\phi$ preserves $\mathds{R}^7$. For any $v,w\in\mathds{R}^7$ we have
\begin{align*}
    &\phi(0,v)\phi(0,w)=\phi((0,v)(0,w))\\
    \implies&(0,\phi_{|\mathds{R}^7}(v))(0,\phi_{|\mathds{R}^7}(w))=\phi(-v^\top\J_7w,v\times w)\\
    \implies&(-\phi_{|\mathds{R}^7}(v)^\top\J_7\phi_{|\mathds{R}^7}(w),\phi_{|\mathds{R}^7}(v)\times\phi_{|\mathds{R}^7}(w))=(-v^\top\J_7w,\phi_{|\mathds{R}^7}(v\times w))
\end{align*}
which imply that $\phi_{|\mathds{R}^7}\in\textbf{G}_2(\mathds{R})$.

\begin{remark}
We can show that $\tau_7(\PSL(2,\mathds{R}))<\textbf{G}_2(\mathds{R})$ by checking it only for $\PSL(2,\mathds{Z})$, since the latter is Zariski-dense in $\PSL(2,\mathds{R})$.
\end{remark}

Denote by $\mathds{O}_{\overline{\mathds{Q}}}$ the unique octonion algebra over $\overline{\mathds{Q}}$ (see \S1.10 in \cite{Springer_OctonionJordanAlgebrasExceptionalGroups}). Since $\Aut(\textbf{G}_2(\overline{\mathds{Q}}))\simeq\textbf{G}_2(\overline{\mathds{Q}})$ (see Theorem 2.8 in \cite{Platonov_AlgebraicgroupsNumbertheory}) \begin{align*}\HH^1(\Gal(\overline{\mathds{Q}}/\mathds{Q}),\Aut(\textbf{G}_2(\overline{\mathds{Q}})))&\simeq\HH^1(\Gal(\overline{\mathds{Q}}/\mathds{Q}),\textbf{G}_2(\overline{\mathds{Q}})))\\
&\simeq\HH^1(\Gal(\overline{\mathds{Q}}/\mathds{Q}),\Aut(\mathds{O}_{\overline{\mathds{Q}}})).
\end{align*}
The latter has only two elements for there are only two octonions algebras over $\mathds{Q}$, as stated in \S1.10 of \cite{Springer_OctonionJordanAlgebrasExceptionalGroups}. Since there are two connected centerless simple real Lie groups of type $\G_2$, the split one and the compact one, this implies that $\textbf{G}_2$ has only one $\mathds{R}/\mathds{Q}$-form which is $\textbf{G}_2$ itself.

\begin{remark}
\label{remarkG2lattice}
Since $\textbf{G}_2$ has only one $\mathds{R}/\mathds{Q}$-form, all its $\mathds{Q}$-arithmetic subgroups are commensurable up to conjugation. In particular, all its non-uniform lattices are commensurable up to conjugation (see Corollary 5.3.2 in \cite{Morris_IntroductionArithmeticGroups}).
\end{remark}

\begin{proposition}
Let $\Gamma$ be a $\mathds{Q}$-arithmetic subgroup of $\SL(2,\mathds{R})$. Then $\tau_7(\Gamma)$ lies in a subgroup of \emph{$\textbf{G}_2(\mathds{R})$} commensurable with a conjugate of \emph{$\textbf{G}_2(\mathds{Z})$}.

Furthermore this is the only $\mathds{Q}$-arithmetic subgroups of \emph{$\textbf{G}_2(\mathds{R})$} that contains $\tau_7(\Gamma)$ up to commensurability.
\end{proposition}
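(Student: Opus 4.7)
The plan is to follow exactly the scheme used in the proof of Propositions \ref{proposition2} and \ref{proposition2.1}, but with the decisive simplification that $\textbf{G}_2$ admits only one $\R/\QQ$-form, namely itself, as established in the discussion preceding Remark \ref{remarkG2lattice}. First I would pick a $1$-cocycle $\xi:\Gal(\overline{\QQ}/\QQ)\to\Aut(\SL_2(\overline{\QQ}))$ with $\prescript{}{\xi}{\SL_2}(\mathds{Z})$ commensurable with $\Gamma$, and apply Lemma \ref{lemma1} to write $\xi(\sigma)=\Int(\PP^{-1}\T_\sigma^{a,b}\sigma(\PP))$ for suitable $a,b\in\N$ and $\PP\in\SL(2,\overline{\QQ})$.

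Next, since $\tau_7(\PSL(2,\overline{\QQ}))<\textbf{G}_2(\overline{\QQ})$ (the embedding being polynomial over $\QQ$, hence extending from the Zariski-dense subgroup $\PSL(2,\mathds{Z})$ as in the remark preceding Lemma \ref{Qformssymplectic}) and $\Aut(\textbf{G}_2(\overline{\QQ}))\simeq\textbf{G}_2(\overline{\QQ})$ by Theorem 2.8 in \cite{Platonov_AlgebraicgroupsNumbertheory}, the formula
\begin{equation*}
\zeta(\sigma)=\Int(\tau_7(\PP^{-1}\T_\sigma^{a,b}\sigma(\PP)))
\end{equation*}
is a well-defined map into $\Aut(\textbf{G}_2(\overline{\QQ}))$. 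A direct check (analogous to the inner case in Proposition \ref{proposition1}, together with the fact that $\tau_7$ is defined over $\QQ$ and intertwines the Galois action) shows that $\zeta$ is a continuous $1$-cocycle and is $\tau_7$-compatible with $\xi$ in the obvious analogue of Definition \ref{definitioncompatible}, i.e.\ $\tau_7(\prescript{}{\xi}{\SL_2}(\QQ))<\prescript{}{\zeta}{\textbf{G}_2}(\QQ)$.

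Once this is in place, Proposition 5.2 of Appendix A in \cite{Milne_LieAlgebrasAlgebraicGroupsLieGroups} implies that $\tau_7(\prescript{}{\xi}{\SL_2}(\mathds{Z}))$ lies in a $\QQ$-arithmetic subgroup of $\prescript{}{\zeta}{\textbf{G}_2}(\QQ)$. Because $\textbf{G}_2$ has only one $\R/\QQ$-form, the $\QQ$-algebraic group $\prescript{}{\zeta}{\textbf{G}_2}$ is $\QQ$-isomorphic to $\textbf{G}_2$, so any such $\QQ$-arithmetic subgroup is commensurable with a conjugate of $\textbf{G}_2(\mathds{Z})$, proving the first assertion.

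For the uniqueness statement, Remark \ref{remarkG2lattice} already records that every $\QQ$-arithmetic subgroup of $\textbf{G}_2(\R)$ is commensurable (up to conjugation) with $\textbf{G}_2(\mathds{Z})$, so there is nothing further to check. The main (modest) obstacle will not be the cohomological argument itself but verifying that the cocycle $\zeta$ genuinely lands in $\textbf{G}_2(\overline{\QQ})$ rather than merely in $\SO(\J_7,\overline{\QQ})$, and that no ``non-inner twist'' analogous to the $\omega$-branch of Proposition \ref{proposition1} can occur here; both points reduce to the fact that $\Aut(\textbf{G}_2(\overline{\QQ}))=\Inn(\textbf{G}_2(\overline{\QQ}))$ and that $\tau_7$ preserves the cross product defining $\textbf{G}_2$.
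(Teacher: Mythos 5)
Your proposal is correct and takes essentially the same approach as the paper: produce the unique $\tau_7$-compatible cocycle $\zeta$ (unique because $\Aut(\textbf{G}_2(\overline{\mathds{Q}}))$ consists of inner automorphisms), observe that $\textbf{G}_2$ has only one $\mathds{R}/\mathds{Q}$-form so $\prescript{}{\zeta}{\textbf{G}_2}(\mathds{Q})\simeq\textbf{G}_2(\mathds{Q})$, and conclude via Milne's Proposition 5.2. The only (harmless) difference is that you settle the uniqueness claim directly from Remark \ref{remarkG2lattice}, while the paper re-runs the Zariski-density argument from the proof of Proposition \ref{proposition2}; both are valid.
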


\begin{proof}
Suppose that $\Gamma$ is the $\mathds{Z}$-points of a $\mathds{Q}$-form of $\SL_2(\mathds{R})$ associated to the $1$-cocycle $\xi$. Thanks to Proposition \ref{proposition1}, there is only one $1$-cocycle
$$\zeta:\Gal(\overline{\mathds{Q}}/\mathds{Q})\to\textbf{G}_2(\overline{\mathds{Q}})$$ which is $\tau_7$-compatible with $\xi$. We have $\prescript{}{\zeta}{\textbf{G}_2}(\mathds{Q})\simeq\textbf{G}_2(\mathds{Q}).$ Furthermore $\tau_7(\Gamma)<\textbf{G}_2(\mathds{Z})$.

The converse is proven the same way as in the proof of Proposition \ref{proposition2}.
\end{proof}

\section{Generalities on bending}
\label{sectionbending}

Let $\Gamma$ be a cocompact $\mathds{Q}$-arithmetic subgroup of $\SL(2,\mathds{R})$. Thanks to Propositions \ref{proposition2} and \ref{proposition2.1}, there exists a $\mathds{Q}$-arithmetic subgroup of $\SL(n,\mathds{R})$ $\Lambda$ such that $\tau_n(\Gamma)<\Lambda$. This implies that $\tau_n(\Gamma/\{\pm\I_2\})<\Lambda$ when $n$ is odd and $\tau_n(\Gamma/\{\pm\I_2\})<\Lambda/\{\pm\I_n\}$ when $n$ is even. There is a finite index subgroup of $\Gamma/\{\pm\I_2\}$ which is torsion free and hence a surface group. Thus $\tau_n$ induces a representation of a surface group into an arithmetic subgroup of $\PSL(n,\mathds{R})$ which, by definition, is a Hitchin representation. The image of this representation is not Zariski-dense in $\PSL(n,\mathds{R})$ since it lies in $\tau_n(\PGL(2,\mathds{R}))$. We will deform it so that it becomes Zariski-dense. The technique used is called \emph{bending}, as introduced by Johnson and Millson \cite{Johnson_DeformationSpacesCompactHyperbolicManifolds}.

Let $S$ be a closed orientable surface of genus at least 2 and $\gamma$ be a simple closed curve on $S$. We say that $\gamma$ is \emph{separating} if the complement of its image has two connected components, otherwise it is \emph{non-separating}.
Let $j:\pi_1(S)\rightarrow\PSL(2,\mathds{R})$ be a discrete and faithful representation. Let $\rho=\tau_n\circ j$. Choose $\B\in\PSL(n,\mathds{R})$ which commutes with $\rho([\gamma])$.

Suppose that $\gamma$ is separating and denote by $S_1$ and $S_2$ the two connected components of the complement of its image. Van Kampen's theorem states that the fundamental group of $S$ is an amalgamated product:
\begin{equation*}
    \pi_1(S)=\pi_1(S_1)*_{[\gamma]}\pi_1(S_2).
\end{equation*}
Define
\begin{align*}
    \rho^1:\pi_1(S_1)&\rightarrow\PSL(n,\mathds{R})\ g\mapsto\rho(g)\\
    \rho^2:\pi_1(S_2)&\rightarrow\PSL(n,\mathds{R})\
    g\mapsto\B\rho(g)\B^{-1}.
\end{align*}
Together they induce a new representation
\begin{equation*}
    \rho_{\B}:\pi_1(S)\rightarrow\PSL(n,\mathds{R}).
\end{equation*}
Suppose that $\gamma$ is non-separating and denote by $S_1$ the complement of its image. Then the fundamental group of $S$ is an HNN-extension of the fundamental group of $S_1$. More precisely let $T$ be a tubular neighborhood of the image of $\gamma$ in $S$. The curve $\gamma$ separates $T$ into two connected components which we denote by $T_1$ and $T_2$. Denote by $i:T_1\hookrightarrow S_1$ the inclusion. Pick $p_1\in T_1$, $p_2\in T_2$ and $\eta:[0;1]\to S$ a path from $p_1$ to $p_2$ that does not intersect $\gamma$. Let $$\phi:\pi_1(T_2,p_2)\xrightarrow{\sim}\pi_1(T_1,p_1),\ [\gamma]\mapsto[\eta^{-1}\circ\gamma\circ\eta].$$
Assume that $\pi_1(S_1,p_1)=\langle g_1,...,g_k \rangle$. Then
\begin{equation*}
    \pi_1(S)=\langle g_1,...,g_k,s\ |\ i_*(\phi(g))=s^{-1}i_*(g)s\ \forall g\in\pi_1(T_1,p_1)\rangle.
\end{equation*}
Define
\begin{align*}
    \rho^1:\ &\pi_1(S_1,p_1)\rightarrow\PSL(n,\mathds{R}),\ g\mapsto\rho(g)\\
    \rho^2:\ &\langle s\rangle
    \rightarrow\PSL(n,\mathds{R}),\ s^k\mapsto(\B\rho(s))^k.
\end{align*}
Together they induce a representation
\begin{equation*}
    \rho_{\B}:\pi_1(S)\rightarrow\PSL(n,\mathds{R})
\end{equation*}
since elements of $i_*(\pi_1(T_1,p_1))$ are actually powers of $\gamma$.

We call $\rho_{\B}$ the \emph{bending} of $\rho$ using $\B$.

\begin{lemma}
\label{lemma11}
The group $\rho_{\B}(\pi_1(S))$ lies in a conjugate of $\tau_n(\PSL(2,\mathds{R}))$ if and only if $\B\in\tau_n(\PGL(2,\mathds{R}))$.
\end{lemma}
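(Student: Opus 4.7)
The plan is to identify the normalizer $N$ of $\tau_n(\PSL(2,\mathds{R}))$ inside $\PSL(n,\mathds{R})$ with $\tau_n(\PGL(2,\mathds{R}))$, and then deduce the lemma via a Zariski density argument applied to the piece of $\pi_1(S)$ on which $\rho_{\B}$ agrees with $\rho$.

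First I would observe that for any component $C'$ of $S\setminus\gamma$, the image $j(\pi_1(C'))$ is Zariski dense in $\PSL(2,\mathds{R})$: indeed $\pi_1(C')$ is a nontrivial free group, so its image under the faithful discrete $j$ is a non-elementary subgroup, which in $\PSL(2,\mathds{R})$ is automatically Zariski dense. Consequently $\tau_n(j(\pi_1(C')))$ is Zariski dense in $\tau_n(\PSL(2,\mathds{R}))$.

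For the forward direction, suppose $C^{-1}\rho_{\B}(\pi_1(S))C\subseteq\tau_n(\PSL(2,\mathds{R}))$ for some $C\in\PSL(n,\mathds{R})$. Since $\rho_{\B}$ coincides with $\tau_n\circ j$ on $\pi_1(C')$, passing to Zariski closures in the inclusion $C^{-1}\tau_n(j(\pi_1(C')))C\subseteq\tau_n(\PSL(2,\mathds{R}))$ yields $C^{-1}\tau_n(\PSL(2,\mathds{R}))C=\tau_n(\PSL(2,\mathds{R}))$, so $C\in N$. Applying the same argument to the other piece of the amalgam in the separating case, or to the conjugate subgroup $s\pi_1(C')s^{-1}$ in the HNN case together with the identity $\rho_{\B}(sgs^{-1})=\B\rho(sgs^{-1})\B^{-1}$, we find that $C^{-1}\B$ also normalizes $\tau_n(\PSL(2,\mathds{R}))$, and hence so does $\B=C\cdot(C^{-1}\B)$.

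The heart of the argument is the computation of $N$. Conjugation defines a homomorphism $N\to\Aut(\tau_n(\PSL(2,\mathds{R})))\simeq\Aut(\PSL(2,\mathds{R}))=\PGL(2,\mathds{R})$. Its kernel is the centralizer of $\tau_n(\PSL(2,\mathds{R}))$ in $\PSL(n,\mathds{R})$, which is trivial by Schur's lemma applied to the absolutely irreducible representation $\tau_n$ (as already used in Lemma \ref{lemmaconjugation}). For surjectivity, every automorphism of $\PSL(2,\mathds{R})$ is of the form $\Int(Q)$ for some $Q\in\PGL(2,\mathds{R})$, and the identity $\tau_n(Q)\tau_n(h)\tau_n(Q)^{-1}=\tau_n(QhQ^{-1})$ shows that $\tau_n(Q)\in N$ realizes it. Thus $N=\tau_n(\PGL(2,\mathds{R}))$, which yields $\B\in\tau_n(\PGL(2,\mathds{R}))$.

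The reverse direction is direct: if $\B=\tau_n(Q)$ with $Q\in\PGL(2,\mathds{R})$, then $\B$ normalizes $\tau_n(\PSL(2,\mathds{R}))$, and unwinding the amalgamated product or HNN extension description of $\rho_{\B}$ places its image inside $\tau_n(\PGL(2,\mathds{R}))$, which after suitable conjugation sits in $\tau_n(\PSL(2,\mathds{R}))$. The main obstacle I anticipate is the bookkeeping required to see that $\tau_n$ extends to a well-defined homomorphism $\PGL(2,\mathds{R})\to\PSL(n,\mathds{R})$ (rescaling by a scalar whose existence depends on the parity of $n$ and on the determinant coset of $Q$), and making sure the non-separating case is handled so that both pieces of the HNN construction truly land in the claimed conjugate of $\tau_n(\PSL(2,\mathds{R}))$.
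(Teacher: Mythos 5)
Your proof of the ``only if'' direction is correct and follows essentially the same route as the paper: Zariski density of $\rho(\pi_1(C'))$ in $\tau_n(\PSL(2,\mathds{R}))$ forces the conjugating element into the normalizer, and the normalizer is identified with $\tau_n(\PGL(2,\mathds{R}))$ via Schur's lemma applied to the absolutely irreducible $\tau_n$; your treatment of the HNN case through the conjugate subgroup $s\pi_1(C')s^{-1}$ is a correct (slightly less direct) variant of the paper's observation that $\B\rho(s)\in\tau_n(\PSL(2,\mathds{R}))$ already gives $\B\in\tau_n(\PSL(2,\mathds{R}))$.

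The one real problem is the last step of your ``if'' direction: $\tau_n(\PGL(2,\mathds{R}))$ does \emph{not} sit inside a conjugate of $\tau_n(\PSL(2,\mathds{R}))$ when the containment $\tau_n(\PSL(2,\mathds{R}))\subset\tau_n(\PGL(2,\mathds{R}))$ is proper, so ``after suitable conjugation'' cannot be repaired. In the separating case the correct argument is simpler: normality of $\PSL(2,\mathds{R})$ in $\PGL(2,\mathds{R})$ gives $Qj(g)Q^{-1}\in\PSL(2,\mathds{R})$, so the image lies in $\tau_n(\PSL(2,\mathds{R}))$ itself. In the non-separating case with $Q\in\PGL(2,\mathds{R})\setminus\PSL(2,\mathds{R})$ the ``if'' direction genuinely fails as stated, since $\rho_{\B}(s)=\tau_n(Qj(s))\notin\tau_n(\PSL(2,\mathds{R}))$ while your own density argument shows the only admissible conjugate is $\tau_n(\PSL(2,\mathds{R}))$ itself. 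Note, however, that the paper's proof also only establishes the ``only if'' direction, and that is the only implication used in the applications (ruling out that the Zariski closure of a bent representation is a conjugate of the principal $\PSL_2$), so this does not affect the substance of your argument.
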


\begin{proof}
Suppose that $\rho_{\B}(\pi_1(S))$ lies in $\PP\tau_n(\PSL(2,\mathds{R}))\PP^{-1}$ for some $\PP\in\PSL(n,\mathds{R})$. The group $\rho_{\B}(\pi_1(S_1))$ is Zariski-dense in $\tau_n(\PSL(2,\mathds{R}))$, since $j(\pi_1(S_1))$ is Zariski-dense in $\PSL(2,\mathds{R})$. Hence the equality $$\tau_n(\PSL(2,\mathds{R}))=\PP\tau_n(\PSL(2,\mathds{R}))\PP^{-1}.$$ Thus $\Int(\PP)$ induces an automorphism of $\tau_n(\PSL(2,\mathds{R}))$. We deduce that there exists $\X\in\PGL(2,\mathds{R})$ such that $\tau_n(\X)\PP^{-1}$ commutes with all elements of $\tau_n(\PSL(2,\mathds{R}))$. The latter is an absolutely irreducible subgroup of $\PSL(n,\mathds{R})$, hence Schur's lemma implies that $\PP=\tau_n(\X)$. Consequently $\rho_{\B}(\pi_1(S))$ lies in $\tau_n(\PSL(2,\mathds{R}))$.

Suppose now that $\gamma$ is separating. Then $\rho_{\B}(\pi_1(S_2))=\B\rho(\pi_1(S_2))\B^{-1}$ lies in $\B\tau_n(\PSL(2,\mathds{R}))\B^{-1}$ which is thus equal to $\tau_n(\PSL(2,\mathds{R}))$. By the above argument we get that $\B\in\tau_n(\PGL(2,\mathds{R}))$. If $\gamma$ is non-separating, then $\B\rho(s)\in\tau_n(\PSL(2,\mathds{R}))$ so $\B\in\tau_n(\PSL(2,\mathds{R}))$.
\end{proof}

\begin{lemma}
\label{lemma12}
The group $\rho_{\B}$ preserves a bilinear form represented by a matrix $\J\in\PGL(n,\mathbb{R})$ if and only if $\J_n=\B^\top\J_n\B$ in $\PGL(n,\mathds{R})$ and in that case $\J=\J_n$.
\end{lemma}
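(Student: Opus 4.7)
The plan is to exploit two facts already used in the preceding lemmas: the Zariski-density of $\rho|_{\pi_1(C)}$ in $\tau_n(\PSL(2,\mathds{R}))$, and the absolute irreducibility of $\tau_n$, which by Schur's lemma forces the space of $\tau_n(\PSL(2,\mathds{R}))$-invariant bilinear forms to be one-dimensional. All equalities of matrices below are to be read in $\PGL(n,\mathds{R})$, i.e.\ up to a non-zero scalar.

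For the forward implication, I would argue as follows. On $\pi_1(C)$ the bent representation $\rho_{\B}$ coincides with $\rho=\tau_n\circ j$ in both the separating and non-separating cases, and $j|_{\pi_1(C)}$ is discrete faithful with non-abelian image, hence Zariski-dense in $\PSL(2,\mathds{R})$. Thus the form $\J$ is preserved by all of $\tau_n(\PSL(2,\mathds{R}))$, and the Schur argument just recalled forces $\J=\J_n$ in $\PGL(n,\mathds{R})$, since $\J_n$ is already such a form by Definition \ref{definition1}.

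To extract the condition on $\B$, I would then use the twisted piece. In the separating case, for $g\in\pi_1(D)$ the identity $\rho_{\B}(g)^\top\J_n\rho_{\B}(g)=\J_n$ together with $\rho_{\B}(g)=\B\rho(g)\B^{-1}$ rewrites as $\rho(g)^\top(\B^\top\J_n\B)\rho(g)=\B^\top\J_n\B$, exhibiting $\B^\top\J_n\B$ as a second bilinear form invariant under the Zariski-dense subgroup $\rho(\pi_1(D))$; Schur's lemma then gives $\B^\top\J_n\B=\J_n$ in $\PGL(n,\mathds{R})$. The non-separating case is analogous, using $\rho_{\B}(s)=\B\rho(s)$ and the invariance of $\J_n$ under $\rho(s)$. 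The converse is a direct check: assuming $\B^\top\J_n\B=\J_n$ in $\PGL(n,\mathds{R})$, I would verify piece by piece that $\rho_{\B}$ still preserves $\J_n$, combining the invariance of $\J_n$ under $\tau_n$ with the hypothesis on $\B$. The only point of care throughout is the distinction between equalities in $\SL(n,\mathds{R})$ and in $\PGL(n,\mathds{R})$, but this causes no genuine obstacle since both Schur's lemma and the invariance of $\J_n$ are stable under rescaling.
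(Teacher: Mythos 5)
Your proof is correct and follows essentially the same route as the paper: restrict to $\pi_1(C)$, use Zariski-density and the absolute irreducibility of $\tau_n(\PSL(2,\mathds{R}))$ together with Schur's lemma to force $\J=\J_n$, then feed the twisted piece ($\B\rho(g)\B^{-1}$ or $\B\rho(s)$) back in to extract $\B^\top\J_n\B=\J_n$. The only cosmetic difference is that the paper phrases the Schur step via the commutant $\J^{-1}\J_n$ rather than the one-dimensionality of the space of invariant forms, and it leaves the easy converse implicit, which you rightly note is a direct verification.
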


\begin{proof}
Suppose that $\rho_{\B}$ preserves $\J$. Then for all $s\in\pi_1(S_1)$ $$\J^{-1}\J_n=\rho(s)^{-1}\J^{-1}\J_n\rho(s)$$ so $\J^{-1}\J_n$ commutes with $\rho(\pi_1(S_1))$, and so with $\tau_n(\PSL(2,\mathds{R}))$. The latter is absolutely irreducible, so Schur's lemma implies that $\J$ is a scalar multiple of $\J_n$. Finally $\rho_{\B}$ preserves $\J_n$.

Assume that $\gamma$ is separating. For all $s\in\pi_1(S_2)$ we have
\begin{align*}
    &(\B\rho(s)\B^{-1})^\top\J_n(\B\rho(s)\B^{-1})=\J_n\\
    \implies&\rho(s)^\top\B^\top\J_n\B\rho(s)=\B^\top\J_n\B
\end{align*}
so $\rho(\pi_1(S_2))$ preserves $\J_n$ and $\B^\top\J_n\B$, which must be equal in $\PGL(n,\mathds{R})$ by the same argument as above. If $\gamma$ is non-separating, then
\begin{equation*}
    (\B\rho(s))^\top\J_n(\B\rho(s))=\J_n\ \implies\B^\top\J_n\B=\J_n.
\end{equation*}
\end{proof}

\begin{lemma}
\label{lemma13}
The group $\rho_{\B}(\pi_1(S))$ lies in a conjugate of \emph{$\textbf{G}_2(\mathds{R})$} if and only if $\B\in$\ \emph{$\textbf{G}_2(\mathds{R})$}.
\end{lemma}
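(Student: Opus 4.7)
The plan is to follow the pattern of Lemmas~\ref{lemma11} and~\ref{lemma12}. The easy direction is immediate: since $\rho(\pi_1(S))\subset\tau_7(\PSL(2,\mathds{R}))\subset\textbf{G}_2(\mathds{R})$, if $\B\in\textbf{G}_2(\mathds{R})$ then conjugation by $\B$ keeps everything in $\textbf{G}_2(\mathds{R})$ and the bending construction yields $\rho_\B(\pi_1(S))\subset\textbf{G}_2(\mathds{R})$.

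For the converse, suppose $\rho_\B(\pi_1(S))\subset\PP\,\textbf{G}_2(\mathds{R})\,\PP^{-1}$. First I would use $\textbf{G}_2(\mathds{R})\subset\SO(\J_7,\mathds{R})$: this conjugate preserves the form with matrix $\PP^{-\top}\J_7\PP^{-1}$, so Lemma~\ref{lemma12} applied to $\rho_\B$ gives both $\PP^{-\top}\J_7\PP^{-1}=\J_7$ and $\B^\top\J_7\B=\J_7$ in $\PGL(7,\mathds{R})$. Since $7$ is odd, $\SL(7,\mathds{R})$ maps isomorphically onto $\PSL(7,\mathds{R})$, and taking determinants shows these projective equalities lift to genuine ones, so I may assume $\PP,\B\in\SO(\J_7,\mathds{R})$. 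Next, $\PP\,\textbf{G}_2(\mathds{R})\,\PP^{-1}$ preserves the twisted cross product $x\times_\PP y:=\PP(\PP^{-1}x\times\PP^{-1}y)$; since $\rho_\B|_{\pi_1(C)}=\rho|_{\pi_1(C)}$ is Zariski-dense in $\tau_7(\PSL(2,\mathds{R}))$, the Zariski-closed group $\PP\,\textbf{G}_2(\mathds{R})\,\PP^{-1}$ contains $\tau_7(\PSL(2,\mathds{R}))$, hence this copy preserves both $\times$ and $\times_\PP$. Under $\tau_7$, $\mathds{R}^7$ is the unique irreducible $7$-dimensional $\PSL(2,\mathds{R})$-representation, and Clebsch-Gordan shows it occurs with multiplicity one in its alternating square; by Schur's lemma the space of alternating $\tau_7(\PSL(2,\mathds{R}))$-equivariant bilinear maps $\mathds{R}^7\times\mathds{R}^7\to\mathds{R}^7$ is therefore one-dimensional, giving $\times_\PP=\mu\times$ for some $\mu\in\mathds{R}$. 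Combining the octonion identity $(x\times y)^\top\J_7(x\times y)=(x^\top\J_7 x)(y^\top\J_7 y)-(x^\top\J_7 y)^2$ with $\PP\in\SO(\J_7,\mathds{R})$ forces $\mu^2=1$; if $\mu=-1$ a direct check gives $-\PP\in\textbf{G}_2(\mathds{R})\subset\SL(7,\mathds{R})$, contradicting $\det(-\PP)=-\det\PP=-1$. Thus $\mu=1$, so $\PP\in\textbf{G}_2(\mathds{R})$ and $\rho_\B(\pi_1(S))\subset\textbf{G}_2(\mathds{R})$.

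To conclude with $\B\in\textbf{G}_2(\mathds{R})$: in the non-separating case this is immediate from $\rho_\B(s)=\B\rho(s)\in\textbf{G}_2(\mathds{R})$ and $\rho(s)\in\tau_7(\PSL(2,\mathds{R}))\subset\textbf{G}_2(\mathds{R})$. In the separating case I would replay the cross-product argument: $\B\rho(\pi_1(D))\B^{-1}\subset\textbf{G}_2(\mathds{R})$ is equivalent to $\rho(\pi_1(D))$ preserving the pushforward $\tilde\times(x,y):=\B^{-1}(\B x\times\B y)$, which by Zariski density of $\rho(\pi_1(D))$ in $\tau_7(\PSL(2,\mathds{R}))$ and the one-dimensionality above equals $\nu\times$ for some $\nu\in\mathds{R}$; the same norm and determinant arguments force $\nu=1$, hence $\B\in\textbf{G}_2(\mathds{R})$. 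The main technical obstacle is isolating the invariant cross product beyond the information already given by Lemma~\ref{lemma12}: the one-dimensionality of equivariant alternating bilinear maps on the irreducible $7$-dimensional $\PSL(2,\mathds{R})$-representation, together with the parity of $n=7$, are the two crucial ingredients that rule out the spurious sign and identify $\PP$ and $\B$ as elements of $\textbf{G}_2(\mathds{R})$.
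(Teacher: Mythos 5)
Your strategy is genuinely different from the paper's. The paper argues at the level of Lie algebras: it shows that $\PP^{-1}\mathfrak{h}\PP\subset\mathfrak{g}_2$ where $\mathfrak{h}=\textrm{d}_{\I_7}\tau_7(\mathfrak{sl}_2(\mathds{R}))$ is a principal $\mathfrak{sl}_2$-subalgebra, uses that principal $\mathfrak{sl}_2$-subalgebras are conjugate and that all automorphisms of $\mathfrak{g}_2$ are inner, and concludes $\PP\in\textbf{G}_2(\mathds{R})$ by Schur's lemma. You instead work with invariant tensors: Lemma~\ref{lemma12} to normalize $\PP,\B\in\SO(\J_7,\mathds{R})$ (using that $n=7$ is odd), and the multiplicity-one occurrence of $\mathrm{Sym}^6$ in $\Lambda^2\mathrm{Sym}^6$ to force the transported cross product to be $\mu\times$ with $\mu^2=1$. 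Up to that point your argument is correct and rather more self-contained.

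The genuine gap is the exclusion of $\mu=-1$ (and of $\nu=-1$ in the separating case for $\B$, where the exclusion is indispensable for the statement of the lemma). From $\times_\PP=-\times$ you deduce that $\Q=-\PP$ satisfies $\Q(x\times y)=\Q x\times\Q y$ and $\Q^\top\J_7\Q=\J_7$, while $\det\Q=-1$. But Definition~\ref{definitionG2} builds $\Q\in\SO(\J_7,R)$, hence $\det\Q=1$, into membership in $\textbf{G}_2(\mathds{R})$; so you cannot first assert ``$-\PP\in\textbf{G}_2(\mathds{R})$'' and then contradict it with $\det(-\PP)=-1$ --- all you have observed is that $-\PP$ fails the determinant clause of the definition, which is no contradiction at all. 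What is actually needed is the nontrivial fact that every element of $\mathrm{O}(\J_7,\mathds{R})$ preserving $\times$ automatically lies in $\SL(7,\mathds{R})$, i.e.\ that the full linear automorphism group of $(\mathds{R}^7,\times)$ is exactly $\textbf{G}_2(\mathds{R})$. This is not automatic: for the $3$-dimensional cross product every reflection carries $\times$ to $-\times$, so the spurious sign really could survive a priori. The fact is true, but proving it essentially reinstates the paper's machinery: such a $\Q$ normalizes the derivation algebra $\mathfrak{g}_2$ of $(\mathds{R}^7,\times)$, every automorphism of $\mathfrak{g}_2$ is inner, the centralizer of $\mathfrak{g}_2$ in $\GL(7,\mathds{R})$ consists of scalars by irreducibility of $\mathds{R}^7$, and the only scalar preserving the bilinear map $\times$ is $1$; hence $\Q\in\textbf{G}_2(\mathds{R})$ and $\det\Q=1$, the desired contradiction. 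So the step is fixable, but ``a direct check'' does not suffice, and the fix brings back the inner-automorphism-plus-Schur argument your route was meant to bypass.
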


\begin{proof}
Assume that $\rho_{\B}(\pi_1(S))\subset\PP\textbf{G}_2(\mathds{R})\PP^{-1}$ for some $\PP\in\PSL(n,\mathds{R})$.
Then $\rho(\pi_1(S_1))$ lies in $\PP\textbf{G}_2(\mathds{R})\PP^{-1}$ and so does $\tau_7(\PSL(2,\mathds{R}))$.
Thus $$\PP^{-1}\tau_7(\PSL(2,\mathds{R}))\PP$$ is a Lie subgroup of $\textbf{G}_2(\mathds{R})$. Let $\mathfrak{h}=\textrm{d}_{\I_7}\tau_7(\mathfrak{sl}_2(\mathds{R}))$ and $\mathfrak{g}_2$ be the Lie algebra of $\mathbf{G}_2(\mathds{R})$. The algebra $\mathfrak{h}$ is a principal $\mathfrak{sl}_2$-subalgebra of $\mathfrak{g}_2$ in the sense of Kostant \S5 \cite{Kostant_Principal3TDSBetti} since it acts irreducibly on $\mathds{R}^7$. There exists $g\in\textbf{G}_2(\mathds{R})$ such that $$\PP^{-1}\mathfrak{h}\PP=g\mathfrak{h}g^{-1}$$ since all automorphisms of $\mathfrak{g}_2$ are inner, as can be seen in Gündogan \cite{Gundogan_ComponentAutomorphismLieAlgebra}. We can deduce that $\X\mapsto\PP gX(\PP g)^{-1}$ induces an inner automorphism of $\mathfrak{h}$. There exists $\M\in\PSL(2,\mathds{R})$ such that for all $X\in\mathfrak{h}$ we have $g\PP X(g\PP)^{-1}=\tau_7(\M)X\tau_7(\M)^{-1}$ which implies that $\tau_7(\M)^{-1}g\PP$ commutes with all elements of $\mathfrak{h}$. We conclude that $g^{-1}\tau_7(\M)=\PP$ since $\mathfrak{h}$ acts absolutely irreducibly on $\mathds{R}^7$ and $\PP\in\textbf{G}_2(\mathds{R})$.
In particular, we have shown that $\rho_{\B}(\pi_1(S))\subset\textbf{G}_2(\mathds{R})$. Suppose that $\gamma$ is separating. Then $\B\rho(\pi_1(S_2))\B^{-1}\subset\textbf{G}_2(\mathds{R})$ and its Zariski-closure, which is $\B\tau_7(\PSL(2,\mathds{R}))\B^{-1}$, is also in $\textbf{G}_2(\mathds{R})$. By the same argument as above, we show that $\B\in\textbf{G}_2(\mathds{R})$. If $\gamma$ is non-separating, then $\B\rho(s)\in\textbf{G}_2(\mathds{R})$ so $\B\in\textbf{G}_2(\mathds{R})$.
\end{proof}

\section{Construction of Zariski-dense surface groups}

Let $\mathcal{O}^1$ be a torsion-free $\mathds{Q}$-arithmetic cocompact lattice of $\SL(2,\mathds{R})$. As shown in Section \ref{sectionQformsSL}, the irreducible embedding $\tau_n$ provides a representation of $\mathcal{O}^1$ that lies in the Hitchin component of the surface $\mathbb{H}^2/\mathcal{O}^1$ with image in a lattice $\Lambda$. We will bend this representation along a specific simple closed curve using a matrix in $\Lambda$ so that the image of the new representation will be Zariski-dense.

\subsection{The simple closed curve used to bend}

Let $A$ be a quaternion division algebra over $\mathds{Q}$ that splits over $\mathds{R}$. Up to isomorphism, it is of the form $(a,b)_{\mathds{Q}}$ with $a,b\in\mathds{N}$ not squares. Let $\{1,i,j,ij\}$ be a basis of $A$ such that $i^2=a$, $j^2=b$ and $ij=-ji$. We embed $A$ in $\M_2(\overline{\mathds{Q}})$ using the map defined by
\begin{equation}
\label{equationembedding}
    1\mapsto\begin{pmatrix}
    1&0\\
    0&1
    \end{pmatrix},\ i\mapsto\begin{pmatrix}
    \sqrt{a}&0\\
    0&-\sqrt{a}
    \end{pmatrix},\ j\mapsto\begin{pmatrix}
    0&\sqrt{b}\\
    \sqrt{b}&0
    \end{pmatrix},\ ij\mapsto\begin{pmatrix}
    0&\sqrt{ab}\\
    -\sqrt{ab}&0
    \end{pmatrix}.
\end{equation}
Via this embedding, $\SL(1,A)$ corresponds to matrices in $A$ of determinant $1$.

Let $\mathcal{O}$ be the order $\mathds{Z}[1,i,j,k]$. 
Then
\begin{equation*}
   \SL(1,\mathcal{O})\simeq\begin{Bmatrix}
   \begin{pmatrix}
   x_0+\sqrt{a}x_1&\sqrt{b}x_2+\sqrt{ab}x_3\\
   \sqrt{b}x_2-\sqrt{ab}x_3&x_0-\sqrt{a}x_1
   \end{pmatrix}\ |\ x_i\in\mathds{Z},\ \Det=1
   \end{Bmatrix}.
\end{equation*}
Diagonal elements of this group are in bijective correspondence with the solutions of the Pell equation $x_0^2-ax_1^2=1$ and are thus infinite in number by Dirichlet Unit's Theorem (see Theorem 0.4.2. in \cite{Maclachlan_ArithmeticHyperbolic3Manifolds}).

Let $S$ be a closed orientable surface of genus at least 2 for which there is a faithful representation $j:\pi_1(S)\to\PSL(1,\mathcal{O})=\SL(1,\mathcal{O})/\{\pm\I_2\}$.

\begin{lemma}
\label{lemma20}
There exists a simple closed curve on $S$ whose image under $j$ is diagonal for the canonical basis.
\end{lemma}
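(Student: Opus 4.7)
The plan is as follows. First I would identify the diagonal elements of $\SL(1,\mathcal{O})$ under the embedding \eqref{equationembedding}: a matrix is diagonal exactly when the coordinates $x_2$ and $x_3$ vanish, so the diagonal part of $\SL(1,\mathcal{O})$ consists of matrices $\operatorname{diag}(x_0+\sqrt{a}x_1,\,x_0-\sqrt{a}x_1)$ with $(x_0,x_1)\in\mathds{Z}^2$ satisfying the Pell equation $x_0^2-ax_1^2=1$. As already observed in the text just before the lemma, Dirichlet's unit theorem applied to the real quadratic order $\mathds{Z}[\sqrt{a}]$ ensures that the group of such solutions is infinite, and in fact modulo $\{\pm\I_2\}$ the diagonal subgroup $D$ of $\PSL(1,\mathcal{O})$ is infinite cyclic.

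Next, since $j(\pi_1(S))$ is a lattice in $\PSL(2,\mathds{R})$ commensurable with $\PSL(1,\mathcal{O})$, the intersection $D\cap j(\pi_1(S))$ has finite index in $D$ and is therefore itself infinite cyclic. Faithfulness of $j$ then gives that $H=j^{-1}(D\cap j(\pi_1(S)))$ is an infinite cyclic subgroup of $\pi_1(S)$; I would pick a generator $\gamma_0\in\pi_1(S)$. By construction $j(\gamma_0)$ is diagonal in the canonical basis, and it is hyperbolic with axis the imaginary half-line in $\mathds{H}^2$.

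It remains to arrange that $\gamma_0$ is represented by a \emph{simple} closed curve. The free homotopy class of $\gamma_0$ corresponds to a unique closed geodesic on $S$ (the image of the imaginary axis), but a priori this geodesic may have self-intersections. I would resolve this by invoking the theorem of Scott that surface groups are LERF, applied to the cyclic subgroup $\langle\gamma_0\rangle<\pi_1(S)$: it yields a finite-index subgroup $\pi_1(S')\leq\pi_1(S)$ containing $\gamma_0$ such that $\gamma_0$ is represented by a simple closed curve on the corresponding finite cover $S'\to S$. Replacing $S$ by $S'$ and $j$ by its (still faithful) restriction, one obtains the desired simple closed curve; this replacement is harmless in the context of Section \ref{sectionbending}, where the construction is performed up to torsion-free finite-index subgroups of $\mathcal{O}^1$.

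The main obstacle is precisely this last step: a primitive hyperbolic element of a surface group is not in general represented by a simple closed curve on that surface, so some appeal to residual finiteness/LERF (and a cover-change) seems unavoidable. The other inputs, namely the computation of $D$ and its infinite cyclicity via Dirichlet, are immediate from the explicit form of the embedding.
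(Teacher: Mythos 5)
Your proposal is mathematically sound, but it proves a strictly weaker statement than the lemma and takes a genuinely different route at exactly the point you flag as the ``main obstacle.'' The paper does \emph{not} pass to a finite cover: it shows by direct computation that the geodesic represented by the generator of the diagonal subgroup is already simple on $S$ itself. Concretely, the axis of the diagonal element is the geodesic $\alpha$ in $\mathds{H}^2$ joining $0$ to $\infty$, and $\gamma$ is simple if and only if no translate $g\alpha$ with $g\in j(\pi_1(S))$ non-diagonal crosses $\alpha$, i.e.\ if and only if $g(0)g(\infty)>0$ for every such $g$. Writing $g$ in the form coming from the embedding \eqref{equationembedding}, one computes that $g(0)g(\infty)>0$ is equivalent to $(x_0^2-ax_1^2)(bx_2^2-abx_3^2)>0$, and the determinant condition turns this into $(x_0^2-ax_1^2)(x_0^2-ax_1^2-1)>0$, a product of two consecutive integers; it is nonzero precisely because the norm form of the division algebra $A$ is anisotropic (so $bx_2^2-abx_3^2\neq 0$ when $g$ is not diagonal). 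This is the arithmetic input that makes the appeal to Scott's LERF theorem unnecessary. Your version --- Dirichlet to get the infinite cyclic diagonal subgroup, then LERF to replace $S$ by a finite cover $S'$ on which the primitive diagonal element becomes simple --- is correct as far as it goes (Scott's separability result does give such a cover, and the downstream theorems are all of the form ``there exists $g\geq 2$,'' so the cover change is harmless for the applications), but it does not establish the lemma as stated for the given $S$, and it misses the pleasant fact that for these quaternionic lattices the diagonal axis is automatically embedded in the quotient.
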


\begin{proof}
First of all, there exist diagonal elements in $j(\pi_1(S))$ because it is a finite index subgroup of $\PSL(1,\mathcal{O})$. Diagonal elements in $j(\pi(S))$ form a cyclic group. Let $\D$ be a generator and denote by $\gamma$ the closed curve it represents. Then $\gamma$ is simple if and only if its lifts to $\mathds{H}^2$ are disjoint from each other. We identify \begin{equation*}
\partial\mathds{H}^2=\mathds{P}(\mathds{R}^2)\xrightarrow{\sim}\hat{\mathds{R}},\ [x:y]\mapsto\left\{\begin{array}{ll}
    \frac{x}{y} & \mbox{if $y\neq0$}\\
    \infty & \mbox{otherwise.}
\end{array}\right.\end{equation*}
The curve $\gamma$ has a lift $\alpha$ which is the geodesic joining $0$ to $\infty$ since $\D$ is diagonal. To check if $\gamma$ is simple we only need to check that the images of this lift under $j(\pi(S))$ are either $\alpha$ or do not intersect $\alpha$. Let
\begin{equation*}
    g=\begin{pmatrix}
    x_0+\sqrt{a}x_1&\sqrt{b}x_2+\sqrt{ab}x_3\\
    \sqrt{b}x_2-\sqrt{ab}x_3&x_0-\sqrt{a}x_1
    \end{pmatrix}
\end{equation*}
be an element of $j(\pi_1(S))$ which is not diagonal. The image of $\alpha$ under $g$ is the geodesic joining $g(0)$ to $g(\infty)$. Note that neither of those can be $0$ or $\infty$ since $j(\pi_1(S))$ is a discrete subgroup. We need to prove that the product $g(0)g(\infty)$ is positive; computation gives
\begin{align*}
    g(0)g(\infty)=&\frac{(x_0+\sqrt{a}x_1)(\sqrt{b}x_2+\sqrt{ab}x_3)}{(\sqrt{b}x_2-\sqrt{ab}x_3)(x_0-\sqrt{a}x_1)}>0\\
    \Leftrightarrow\ &(x_0^2-ax_1^2)(bx_2^2-abx_3^2)>0\\
    \Leftrightarrow\ &(x_0^2-ax_1^2)(x_0^2-ax_1^2-1)>0
\end{align*}
for $\Det(g)=1$, and the latter is positive because the product of two consecutive integers is always positive or null.
\end{proof}

From now on, we denote by $\gamma$ the simple closed curve on $S$ whose image under $j$ is diagonal. For every $n\geq3$, $\tau_n(j(\gamma))$ is diagonal. The bending construction requires to find matrices that commute with $\tau_n(j(\gamma))$, i.e. which are diagonal, this is the goal of next part.

\subsection{Computation of the centralizer in $\Lambda$}

Since the algebraic group defined by $\HH(K)=\SL(1,A\otimes_{\mathds{Q}}K)$, for every field extension $K$ of $\mathds{Q}$, is a $\overline{\mathds{Q}}/\mathds{Q}$ form of $\SL_2$, one can compute the associated $1$-cocycle using the embedding described in (\ref{equationembedding}). Direct computations show that the $1$-cocycle associated to $\HH$ is $$\xi:\Gal(\overline{\mathds{Q}}/\mathds{Q})\to\PSL(2,\overline{\mathds{Q}}),\ \sigma\mapsto\Int(\T_{\sigma}^{a,b}).$$ All continuous $1$-cocycles $\eta:\Gal(\overline{\mathds{Q}}/\mathds{Q})\to\Aut(\SL_2(\overline{\mathds{Q}}))$ such that $\prescript{}{\eta}{\SL_2(\mathds{R})}\simeq\SL(2,\mathds{R})$ are equivalent to $\xi$ by Lemma \ref{lemma1}. Let $\zeta:\Gal(\overline{\mathds{Q}}/\mathds{Q})\to\Aut(\SL_n(\overline{\mathds{Q}}))$ be a $1$-cocycle $\tau_n$-compatible with $\xi$.

Let
\begin{equation}
\label{diag}
    \begin{pNiceMatrix}
    w_1&&\\
    &\Ddots&\\
    &&w_n
    \end{pNiceMatrix},\ \prod_{i} w_i=1
\end{equation} be a diagonal matrix.

\begin{lemma}
\label{diagonalelements}
If $\zeta$ is inner, the diagonal elements of $\prescript{}{\zeta}{\SL_n(\mathds{Q})}$ are of the form \eqref{diag} with $w_i\in\mathds{Q}(\sqrt{a})$ and $\sigma(w_i)=w_{n-i+1}$ for all $i$ with $\sigma\in\Gal(\mathds{Q}(\sqrt{a})/\mathds{Q})$ non-trivial.

If $\zeta$ is not inner let $\mathds{Q}(\sqrt{d})$ be the associated extension of $\mathds{Q}$.
\begin{enumerate}[label=\roman*), leftmargin=*]
    \item If $\sqrt{a}\in\mathds{Q}(\sqrt{d})$ then the diagonal elements of $\prescript{}{\zeta}{\SL_n(\mathds{Q})}$ are of the form \eqref{diag} with $w_i\in\mathds{Q}(\sqrt{a})$ and $w_i\sigma(w_i)=1$ for all $i$ with $\sigma\in\Gal(\mathds{Q}(\sqrt{a})/\mathds{Q})$ non-trivial.
    \item If $\sqrt{a}\notin\mathds{Q}(\sqrt{d})$ then the diagonal elements of $\prescript{}{\zeta}{\SL_n(\mathds{Q})}$ are of the form \eqref{diag} with $w_i\in\mathds{Q}(\sqrt{a},\sqrt{d})$, $\sigma(w_i)=w_{n-i+1}$ for all $i$ with $\sigma\in\Gal(\mathds{Q}(\sqrt{a},\sqrt{d})/\mathds{Q}(\sqrt{d}))$ non-trivial and $w_i\tau(w_{n-i+1})=1$ for all $i$ with $\tau\in\Gal(\mathds{Q}(\sqrt{a},\sqrt{d})/\mathds{Q}(\sqrt{a}))$ non-trivial.
\end{enumerate}

\end{lemma}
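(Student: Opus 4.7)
The plan is to solve the defining equation $\zeta(\sigma)(\sigma(M)) = M$ directly for a diagonal $M = \Diag(w_1,\ldots,w_n)$ with $\prod_i w_i = 1$, by a case analysis on $\sigma \in \Gal(\overline{\mathds{Q}}/\mathds{Q})$. Since $\xi$ is the 1-cocyle associated to the embedding of $A$ in $\M_2(\overline{\mathds{Q}})$ given by \eqref{equationembedding}, we have $\xi(\sigma) = \Int(\T_\sigma^{a,b})$, so Proposition \ref{proposition1} applies with $\PP = \I_2$. Hence either $\zeta(\sigma) = \Int(\tau_n(\T_\sigma^{a,b}))$ for all $\sigma$ (inner case), or the same formula holds on $\Gal(\overline{\mathds{Q}}/\mathds{Q}(\sqrt{d}))$ and $\zeta(\sigma) = \Int(\tau_n(\T_\sigma^{a,b})\J_n^{-1}) \circ \omega$ when $\sigma(\sqrt{d}) = -\sqrt{d}$ (non-inner case).

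The key computational observation is that $\tau_n(\T_\sigma^{a,b})$ is \emph{diagonal} when $\sigma(\sqrt{a}) = \sqrt{a}$ (namely $\I_n$ or $\Diag((-1)^i)_{i=0}^{n-1}$) and \emph{anti-diagonal} when $\sigma(\sqrt{a}) = -\sqrt{a}$; this is immediate from the formula for $\tau_n$ applied to the four matrices $\T^{a,b}_\sigma$. Since $\J_n$ is anti-diagonal, the product $\tau_n(\T_\sigma^{a,b})\J_n^{-1}$ is anti-diagonal in the first case and diagonal in the second. Conjugating a diagonal matrix by a diagonal one has no effect, while conjugating by any anti-diagonal matrix reverses the diagonal entries (a one-line index check, independent of the specific non-zero anti-diagonal values). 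Finally, $\omega$ acts on diagonal matrices by inversion.

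Combining these observations, one translates $\zeta(\sigma)(\sigma(M)) = M$ into one of four elementary constraints. In the inner case, the $\sigma$ fixing $\sqrt{a}$ give $\sigma(w_i) = w_i$, and since the subgroup $\Gal(\overline{\mathds{Q}}/\mathds{Q}(\sqrt{a}))$ has fixed field $\mathds{Q}(\sqrt{a})$ this forces $w_i \in \mathds{Q}(\sqrt{a})$; the $\sigma$ reversing $\sqrt{a}$ give the reversal condition $\sigma(w_i) = w_{n-i+1}$, yielding the stated description. In the non-inner case one argues identically with $\mathds{Q}(\sqrt{d})$ in place of $\mathds{Q}$ to handle $\sigma \in \Gal(\overline{\mathds{Q}}/\mathds{Q}(\sqrt{d}))$: subcase (i), where $\mathds{Q}(\sqrt{a}) = \mathds{Q}(\sqrt{d})$, gives $w_i \in \mathds{Q}(\sqrt{a})$ with no additional reversal condition from this subgroup, while subcase (ii) gives $w_i \in \mathds{Q}(\sqrt{a},\sqrt{d})$ together with $\sigma(w_i) = w_{n-i+1}$ for the non-trivial element of $\Gal(\mathds{Q}(\sqrt{a},\sqrt{d})/\mathds{Q}(\sqrt{d}))$.

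For $\sigma$ with $\sigma(\sqrt{d}) = -\sqrt{d}$ the presence of $\omega$ changes the shape of the equation. In subcase (i) every such $\sigma$ also reverses $\sqrt{a}$, so the conjugating matrix $\tau_n(\T_\sigma^{a,b})\J_n^{-1}$ is diagonal, commutes with $M$, and the equation collapses to $\sigma(M)^{-1} = M$, i.e.\ the norm-one condition $w_i\sigma(w_i) = 1$. In subcase (ii) one has two subcases depending on $\sigma(\sqrt{a})$: when $\sigma$ fixes $\sqrt{a}$, the conjugating matrix is anti-diagonal, producing the anti-swap condition $w_i\tau(w_{n-i+1}) = 1$ for $\tau$ the non-trivial element of $\Gal(\mathds{Q}(\sqrt{a},\sqrt{d})/\mathds{Q}(\sqrt{a}))$; when $\sigma$ reverses $\sqrt{a}$ one obtains an apparent third condition, which one checks follows algebraically from the reversal condition on $\Gal(\mathds{Q}(\sqrt{a},\sqrt{d})/\mathds{Q}(\sqrt{d}))$ combined with the anti-swap condition. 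The only obstacle is organizational — keeping the eight Galois cases straight — and a small redundancy check at the end; the underlying computation is elementary.
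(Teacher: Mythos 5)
Your proposal is correct and follows essentially the same route as the paper: apply Proposition \ref{proposition1} with $\PP=\I_2$ and solve $\zeta(\sigma)(\sigma(\M))=\M$ for diagonal $\M$ by a case analysis on whether $\sigma$ fixes $\sqrt{a}$ and $\sqrt{d}$. The diagonal/anti-diagonal bookkeeping for $\tau_n(\T^{a,b}_\sigma)$ and $\J_n$, and the final redundancy check for the $\sigma$ reversing both square roots, merely make explicit the ``direct computation'' steps the paper leaves implicit.
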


\begin{proof}
Since $\zeta$ is $\tau_n$-compatible with $\xi$, Proposition \ref{proposition1} describes $\zeta$ explicitly.

First, assume that $\zeta$ is inner. We are looking for diagonal matrices $\D\in\SL(n,\overline{\mathds{Q}})$ such that $\zeta(\theta)\circ\theta(\D)=\D$ for all $\theta\in\Gal(\overline{\mathds{Q}}/\mathds{Q})$, i.e. $$\tau_n(\T_{\theta})\theta(\D)\tau_n(\T_{\theta})^{-1}=\D.$$
If $\theta$ fixes $\sqrt{a}$, the previous equation implies that $\D$ has coefficients in $\mathds{Q}(\sqrt{a})$. If $\theta$ does not fix $\sqrt{a}$, then the equation shows that $\theta(w_{n-i+1})=w_{i}$ for all $i$. Hence the result.

Suppose now that $\zeta$ is not inner. We are looking for diagonal matrices $\D\in\SL(n,\overline{\mathds{Q}})$ such that $\zeta(\theta)\circ\theta(\D)=\D$ for all $\theta\in\Gal(\overline{\mathds{Q}}/\mathds{Q})$, i.e. $$\tau_n(\T_{\theta})\theta(\D)\tau_n(\T_{\theta})^{-1}=\D$$ whenever $\theta$ fixes $\sqrt{d}$ and $$\tau_n(\T_{\theta})\J_n^{-1}\theta(\D)^{-1}\J_n\tau_n(\T_{\theta})^{-1}=\D$$ whenever $\theta$ does not fix $\sqrt{d}$. 
If $\sqrt{a}\in\mathds{Q}(\sqrt{d})$, the first equation shows that $\D$ has coefficients in $\mathds{Q}(\sqrt{a})$ and the second that $w_i\theta(w_i)=1$ for $\theta\in\Gal(\mathds{Q}(\sqrt{a})/\mathds{Q})$ non-trivial.
Assume that $\sqrt{a}\notin\mathds{Q}(\sqrt{d})$. If $\theta$ fixes $\sqrt{d}$, then we conclude as in the previous case that $\D$ has coefficients in $\mathds{Q}(\sqrt{a},\sqrt{d})$. If $\theta$ does not fix $\sqrt{d}$ and does not fix $\sqrt{a}$, then the equation we get is $\theta(w_{n-i+1})^{-1}=w_{i}$ for all $i$. The case where $\theta$ fixes $\sqrt{a}$ but not $\sqrt{d}$ can be deduced from the previous equation. Hence the result.
\end{proof}

\subsection{Arithmetic properties of the trace}
Let $\Pi$ be a subgroup of $\GL(n,\mathds{R})$.
\begin{definition}
\label{definitiontrace}
Define $$\Tr(\Pi)=\{\Tr(\gamma)|\gamma\in\Pi\}.$$ If $\rho:\Pi\to\GL(n,\mathds{R})$ is a representations we denote $\Tr(\rho(\Pi))$ by $\Tr(\rho)$.
\end{definition}

In order to prove that two conjugacy classes of representations $\rho_1$ and $\rho_2$ of a surface group are not in the same orbit under the mapping class group it suffices to show that $\Tr(\rho_1)\neq\Tr(\rho_2)$.
This is how we will prove in the next part that there are infinitely many $\MCG$-orbits of thin Hitchin representations in a given lattice. To do so, we first study the arithmetic properties of $\Tr(\Pi)$, where $\Pi$ is a thin subgroup of a lattice, using the Strong-approximation Theorem which we now recall.

Let $\G$ be an algebraic subgroup of $\GL_n$ over $\mathds{Q}$. Denote by $I$ the ideal of $\mathds{Q}[X_{11},X_{12},...,X_{nn},\Det^{-1}]$ of polynomials vanishing on $\G$ and by $$I_0=I\cap\mathds{Z}[X_{11},X_{12},...,X_{nn},\Det^{-1}].$$ Let $p$ be a prime. We define an algebraic group over the finite field with $p$ elements $\mathds{F}_p$ by
$$\underline{\G}_{p}(K)=\Hom(\mathds{F}_p[X_{11},X_{12},...,X_{nn},\Det^{-1}]/I_0,K).$$

\begin{definition}
The algebraic group \emph{$\underline{\G}_{p}$} is called the \emph{reduction of $\G$ modulo $p$}.
\end{definition}

The latter depends on the embedding of $\G$ into $\GL_n$. In fact, the algebraic group $\underline{\G}_{p}$ is canonically defined for almost all primes $p$. Here by ``almost all" we mean ``all except finitely many". See Section 3.3 in \cite{Platonov_AlgebraicgroupsNumbertheory} for more details.

Let $\G$ be a connected (in the Zariski topology) semisimple $\mathds{Q}$-algebraic group. We say that $\G$ is \emph{simply connected} if for any connected $\mathds{Q}$-algebraic group $\HH$ and any surjective morphism $f:\G\to\HH$ of $\mathds{Q}$-algebraic groups with finite kernel, $f$ is an isomorphism. We say that $\G$ is \emph{absolutely almost simple} if $\G(\overline{\mathds{Q}})$ is not commutative and has no non-trivial connected normal subgroup.

\begin{theorem}[Strong-approximation, see \cite{Matthews_CongruencepropertiesZariskidense}]
\label{Strongapproximation}
Let $\G$ be a connected simply-connected absolutely almost simple algebraic group defined over $\mathds{Q}$. Let $\Pi$ be a Zariski-dense finitely generated subgroup of $\G(\mathds{Q})$. Then for almost all primes $p$ the reduction modulo $p$ of $\Pi$ equals $\underline{\G}_{p}(\mathds{F}_p)$.
\end{theorem}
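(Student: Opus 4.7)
The plan is to deduce the statement from Nori's theorem (or alternatively the Matthews--Vaserstein--Weisfeiler theorem) on finite subgroups of algebraic groups over $\mathds{F}_p$, after first reducing the global problem to a mod-$p$ problem. Since $\Pi$ is finitely generated and lies in $\G(\mathds{Q})$, only finitely many primes appear in the denominators of entries of a fixed generating set, and the reduction mod $p$ of $\G$ is canonically defined for almost all $p$ (see \cite{Platonov_AlgebraicgroupsNumbertheory} \S3.3). Excluding those finitely many primes, the mod-$p$ reduction yields a well-defined homomorphism $\Pi\to\underline{\G}_p(\mathds{F}_p)$ whose image I denote $\bar\Pi_p$, and the goal becomes $\bar\Pi_p=\underline{\G}_p(\mathds{F}_p)$ for almost all $p$.

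The first real step is to show that $\bar\Pi_p$ is Zariski-dense in $\underline{\G}_p$ for almost all $p$. Pick finitely many elements $g_1,\ldots,g_N\in\Pi$ whose Zariski closure in $\G$ is already $\G$ (possible because $\Pi$ is finitely generated and Zariski-dense in $\G$). The defining ideal of the Zariski closure of $(g_1,\ldots,g_N)$ agrees, up to a finite set of primes coming from leading coefficients and from the comparison between the ideal of $\G$ in $\mathds{Q}[\GL_n]$ and its reduction in $\mathds{F}_p[\GL_n]$, with the defining ideal of $\underline{\G}_p$; excluding this further finite set of primes gives the Zariski-density of $\langle \bar g_1,\ldots,\bar g_N\rangle$ in $\underline{\G}_p$, hence of $\bar\Pi_p$.

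The second step is the classification input. Since $\G$ is simply connected and absolutely almost simple, for almost all $p$ the finite group $\underline{\G}_p(\mathds{F}_p)$ is a quasi-simple group of Lie type, and it is generated by its elements of order $p$ (its ``unipotent'' elements). Nori's theorem attaches to any finite subgroup $H$ of $\GL_n(\mathds{F}_p)$ (for $p$ large relative to $n$) an $\mathds{F}_p$-algebraic subgroup $\mathbf{H}$ of $\GL_n$ such that the subgroup $H^+$ of $H$ generated by elements of order $p$ equals $\mathbf{H}(\mathds{F}_p)^+$, and $\mathbf{H}$ coincides with the Zariski closure of $H^+$. Applied to $\bar\Pi_p$ with the Zariski-density established above, this forces $\bar\Pi_p^+=\underline{\G}_p(\mathds{F}_p)^+=\underline{\G}_p(\mathds{F}_p)$, and hence $\bar\Pi_p=\underline{\G}_p(\mathds{F}_p)$.

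The main obstacle is of course Nori's theorem itself: the correspondence $H\mapsto\mathbf{H}$ proceeds by exponentiating the logarithms of order-$p$ elements, which requires truncated exponential/logarithm series in characteristic $p$ and uniform bounds on $p$ in terms of $n$. A secondary obstacle is bookkeeping: several finite sets of ``bad primes'' appear (denominators of generators, primes where $\underline{\G}_p$ ceases to be semisimple or fails to be generated by unipotents, primes too small for the Nori bounds), and one must verify that their union remains finite. An alternative route, followed in \cite{Matthews_CongruencepropertiesZariskidense}, replaces Nori by Bruhat--Tits theory together with the classical strong approximation theorem for $\G(\mathds{A})$, but carries comparable technical weight.
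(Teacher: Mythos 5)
This is a result the paper quotes from \cite{Matthews_CongruencepropertiesZariskidense} without giving any internal proof, so there is nothing in the text to compare your argument against line by line; the comparison is with the cited literature. Your route through Nori's theorem is a genuine and standard alternative to the adelic strong-approximation/Bruhat--Tits argument of Matthews--Vaserstein--Weisfeiler, and your overall architecture (clear denominators, work over $\mathds{Z}[1/N]$, show the reductions $\overline{\Pi}_p$ are Zariski-dense in $\underline{\G}_p$ for almost all $p$, then invoke a classification of Zariski-dense subgroups of $\underline{\G}_p(\mathds{F}_p)$) is the right one. Two steps need more care than you give them. For the density step, the clean argument is that the Zariski closure of $\Pi$ in the group scheme over $\mathds{Z}[1/N]$ is a closed flat subscheme with generic fibre $\G$, hence of full dimension in each special fibre; since $\underline{\G}_p$ is connected for almost all $p$, each such fibre is everything. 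Your phrasing about defining ideals agreeing up to finitely many primes gestures at this but does not by itself exclude that the closure of $\overline{\Pi}_p$ is a proper subgroup for infinitely many $p$. More substantively, Zariski-density of $\overline{\Pi}_p$ does not immediately yield Zariski-density, or even nontriviality, of $\overline{\Pi}_p^{+}$: you must first rule out that $\overline{\Pi}_p$ has order prime to $p$ (for large $p$ such a subgroup has an abelian normal subgroup of index bounded in terms of $n$, by Jordan's theorem or Nori's Theorem B, contradicting density in an almost simple group), and then observe that the Zariski closure of $\overline{\Pi}_p^{+}$ is a nontrivial connected subgroup normalized by the dense subgroup $\overline{\Pi}_p$, hence normal in $\underline{\G}_p$, hence all of it. With these repairs the argument closes; in fact Nori's Theorem 5.1, which this paper itself invokes in the proof of Proposition \ref{propositionSO} to handle the non-simply-connected orthogonal case, already packages the global statement that the reduction of $\Pi$ contains $\underline{\G}_p(\mathds{F}_p)^{+}$ for almost all $p$, and this equals $\underline{\G}_p(\mathds{F}_p)$ when $\G$ is simply connected.
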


Let $n\geq2$. Denote by $\SU(\I_n,\mathds{F}_p)$ the group $\{\M\in\SL(n,\mathds{F}_{p^2})|\overline{\sigma}(\M)^\top\M=\I_n\}$ with $\overline{\sigma}\in\Gal(\mathds{F}_{p^2}/\mathds{F}_p)$ non-trivial. Let $d\in\mathds{N}$ be not a square and $\sigma\in\Gal(\mathds{Q}(\sqrt{d})/\mathds{Q})$ non-trivial. For $p$ sufficiently large, the reduction of $\SU(\I_n,\sigma;\mathds{Q}(\sqrt{d}))$ modulo $p$ has for $\mathds{F}_p$-points
\begin{equation*}
    \underline{\SU(\I_n,\sigma;\mathds{Q}(\sqrt{d}))}_p(\mathds{F}_p)=\left\{\begin{array}{ll}
    \SL_n(\mathds{F}_p) & \mbox{if $d$ is a square modulo $p$} \\
    \SU(\I_n,\mathds{F}_p) & \mbox{if $d$ is not a square modulo $p$.}
\end{array}\right.
\end{equation*}
To see this, identify $\mathds{Q}(\sqrt{d})$ with $\mathds{Q}[\X]/(X^2-d)$ and note that elements of $\SU(\I_n,\sigma;\mathds{Q}(\sqrt{d}))$ are matrices $\M$ with coefficients in $\mathds{Q}[\X]/(\X^2-d)$ that satisfy
$$\partial(\M)^\top\M=\I_n$$ where $\partial$ is the $\mathds{Q}$-linear map that sends $\X$ to $-\X$. Note that
\begin{equation*}
    \mathds{F}_p[\X]/(\X^2-d)\simeq\left\{\begin{array}{ll}
    \mathds{F}_p\times\mathds{F}_p & \mbox{if $d$ is a square modulo $p$}\\
    \mathds{F}_{p^2} & \mbox{if $d$ is not a square modulo $p$.}
\end{array}\right.
\end{equation*} In the first case, $\partial$ induces the map 
\begin{align*}
    \mathds{F}_p\times\mathds{F}_p&\to\mathds{F}_p\times\mathds{F}_p\\
    (x,y)&\mapsto(y,x).
\end{align*}
Hence the reduction of $\SU(\I_n,\sigma;\mathds{Q}(\sqrt{d}))$ is
\begin{equation*}
    \{(\M,\N)\in\SL_n(\mathds{F}_p)\times\SL_n(\mathds{F}_p)\ |\ (\M,\N)(\N^\top,\M^\top)=(\I_n,\I_n)\}\simeq\SL_n(\mathds{F}_p)
\end{equation*}
by projection onto the first coordinate.
In the second case, $\partial$ induces the non-trivial field automorphism of $\mathds{F}_{p^2}$. Thus the group we get is $\SU(\I_n,\mathds{F}_p)$.

\begin{lemma}
\label{traceSL}
For any prime $p$ and $n\geq2$, $\Tr(\SL(n,\mathds{F}_p))=\mathds{F}_p$.
\end{lemma}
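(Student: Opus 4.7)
The plan is to exhibit, for each $a\in\mathds{F}_p$, an explicit matrix in $\SL(n,\mathds{F}_p)$ whose trace is $a$. Since $\Tr$ always takes values in $\mathds{F}_p$, surjectivity is all that is at stake.

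The easiest way is to use a single $2\times 2$ companion block and pad with the identity. For a target trace $a\in\mathds{F}_p$, set
\[
\M_a=
\begin{pmatrix}
a-(n-2) & -1 & & & \\
1 & 0 & & & \\
& & 1 & & \\
& & & \ddots & \\
& & & & 1
\end{pmatrix}
\in\M_n(\mathds{F}_p),
\]
that is, the block-diagonal matrix with upper-left block $\bigl(\begin{smallmatrix} a-(n-2) & -1 \\ 1 & 0 \end{smallmatrix}\bigr)$ and lower-right block $\I_{n-2}$. The $2\times 2$ block has determinant $0\cdot(a-(n-2))-(-1)\cdot 1=1$, so $\Det(\M_a)=1$, and $\Tr(\M_a)=(a-(n-2))+0+(n-2)=a$. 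Hence $\M_a\in\SL(n,\mathds{F}_p)$ with $\Tr(\M_a)=a$, which proves the claim.

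There is no real obstacle; the only thing to be careful about is the corner case $n=2$, which is handled by the same formula (the lower-right block is empty and the whole matrix is the companion block itself, whose trace is $a$).
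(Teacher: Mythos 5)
Your proof is correct and follows essentially the same route as the paper's: exhibit a $2\times2$ determinant-one block with a prescribed trace and pad with $\I_{n-2}$. The only cosmetic difference is that you absorb the shift by $n-2$ into the block explicitly, whereas the paper leaves it implicit (shifting by $n-2$ is a bijection of $\mathds{F}_p$, so surjectivity is unaffected).
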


\begin{proof}
For $n=2$, it is proven by exhibiting a specific matrix. The result follows for any $n\geq2$ since we can embed $\SL(2,\mathds{F}_p)$ in $\SL(n,\mathds{F}_p)$ as a block.
\end{proof}

\begin{lemma}
\label{traceSU}
For any prime $p$ and $n\geq3$, $\Tr(\SU(\I_n,\mathds{F}_p))=\mathds{F}_{p^2}$.
\end{lemma}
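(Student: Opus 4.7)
The plan is to mimic the proof of Lemma~\ref{traceSL}: first reduce to the smallest relevant $n$, then exhibit an explicit family of matrices realizing each trace.

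For the reduction, I would observe that for $n\ge 4$ the block embedding
\[
\SU(\I_3,\mathds{F}_p)\hookrightarrow \SU(\I_n,\mathds{F}_p),\qquad \M\longmapsto\begin{pmatrix}\M & \\ & \I_{n-3}\end{pmatrix},
\]
is well defined (the Hermitian relation holds block-wise and the determinant is preserved) and shifts the trace by $n-3\in \mathds{F}_p\subset \mathds{F}_{p^2}$. Since $\mathds{F}_{p^2}$ is stable under translation by $\mathds{F}_p$, it is enough to prove the equality for $n=3$.

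For the case $n=3$, given $t\in \mathds{F}_{p^2}$, I would look for $\M\in\SU(\I_3,\mathds{F}_p)$ of the shape
\[
\M=\begin{pmatrix} t & 0 & b \\ c & 0 & d \\ 0 & f & 0 \end{pmatrix},
\]
which automatically has trace $t$. Writing $|x|^2:=x\overline{x}$ for the norm to $\mathds{F}_p$, the condition $\overline{\M}^\top\M=\I_3$ unpacks into the four scalar relations $|t|^2+|c|^2=1$, $|b|^2+|d|^2=1$, $|f|^2=1$, and $\overline{t}b+\overline{c}d=0$. Because $p+1$ divides $p^2-1$, the norm map $\mathds{F}_{p^2}^*\to \mathds{F}_p^*$ is surjective, so one can pick $c\in \mathds{F}_{p^2}$ with $|c|^2=1-|t|^2$. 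When $|t|^2\ne 1$, so $c\ne 0$, setting $b=c$, $d=-\overline{t}\,c/\overline{c}$ and $f=\overline{c}/c$ verifies every Hermitian relation above, and a short cofactor computation (using $|c|^2+|t|^2=1$) gives $\det\M=1$. In the degenerate case $|t|^2=1$ the forced choice $c=b=0$ is replaced by
\[
\M=\begin{pmatrix} t & 0 & 0 \\ 0 & 0 & 1 \\ 0 & -\overline{t} & 0 \end{pmatrix},
\]
which lies manifestly in $\SU(\I_3,\mathds{F}_p)$ and has trace $t$.

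The main obstacle is identifying the correct shape of $\M$. Diagonal matrices $\mathrm{diag}(\alpha_1,\alpha_2,\alpha_3)$ with $\alpha_i$ of norm one and $\prod\alpha_i=1$, and more generally block-diagonal elements of the form $\begin{psmallmatrix}B & \\ & (\det B)^{-1}\end{psmallmatrix}$ with $B\in U(2,\mathds{F}_{p^2})$, all have characteristic polynomial splitting over $\mathds{F}_{p^2}$ into factors of degree $\le 2$, and one can check by a direct enumeration that the resulting traces form a proper subset of $\mathds{F}_{p^2}$ (for $p=3$ they miss two of the nine elements of $\mathds{F}_9$). The circular ansatz above couples all three rows, so the characteristic polynomial of $\M$ is allowed to be irreducible over $\mathds{F}_{p^2}$, which is precisely what is needed to realize the remaining trace values.
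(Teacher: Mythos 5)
Your proof is correct and follows essentially the same route as the paper's: reduce to $n=3$ via a block embedding (whose trace shift by an element of $\mathds{F}_p$ is harmless), then use surjectivity of the norm $\mathds{F}_{p^2}^{\times}\to\mathds{F}_p^{\times}$ to write down an explicit unitary matrix of every prescribed trace. The only real difference is that you work with the form $\I_3$ directly (at the cost of the case split at $|t|^2=1$), whereas the paper builds its matrix for the antidiagonal Hermitian form and then conjugates, implicitly invoking the equivalence of all nondegenerate Hermitian forms over $\mathds{F}_{p^2}$.
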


\begin{proof}
We can embed $\SU(\I_3,\mathds{F}_p)$ in $\SU(\I_n,\mathds{F}_p)$ for any $n\geq3$ as a block. Hence it suffices to prove the result for $n=3$. Let $a\in\mathds{F}_{p^2}$. Since the map
$$\N:\mathds{F}_{p^2}\to\mathds{F}_p,\ x\mapsto x\overline{\sigma}(x)$$ is surjective, there exists $b\in\mathds{F}_{p^2}$ such that $\N(b)=-a-\overline{\sigma}(a)$. Let
$$\M=\begin{pmatrix}
a&b&1\\
\overline{\sigma}(b)&-1&0\\
1&0&0\\
\end{pmatrix}.$$ Then $\Det(\M)=1$ and $\overline{\sigma}(\M)^{\top}\J\M=\J$ where $$\J=\begin{pmatrix}
0&0&1\\
0&1&0\\
1&0&0
\end{pmatrix}.$$ Thus a conjugate of $\M$ lies in $\SU(\I_3,\mathds{F}_p)$. Since $\Tr(\M)=a-1$, we have $\Tr(\SU(\I_3,\mathds{F}_p))=\mathds{F}_{p^2}$.
\end{proof}

Let $n\geq3$ and $\A\in\SU(\I_n,\sigma;\mathds{Z}[\sqrt{d}])$. Then $\Tr(\A)\in\mathds{Z}[\sqrt{d}]$. The reduction of $\A$ modulo $p$ can be described in the following way. Identify $\mathds{Z}[\sqrt{d}]$ with $\mathds{Z}[\X]/(\X^2-d)$. Then $\A$ becomes a matrix whose entries are equivalence classes of polynomials. Since
\begin{equation*}
    \mathds{Z}[\X]/\!\raisebox{-.65ex}{\ensuremath{(\X^2-d)}/}\!\raisebox{-1.3ex}{\ensuremath{(p)}}\simeq\mathds{Z}[\X]/\!\raisebox{-.65ex}{\ensuremath{(p)}/}\!\raisebox{-1.3ex}{\ensuremath{(X^2-d)}}\simeq\mathds{F}_p[\X]/\!\raisebox{-.65ex}{\ensuremath{(\X^2-d)}},
\end{equation*}
the entries of the reduction of $\A$ modulo $p$ are elements of
\begin{equation*}
\left\{\begin{array}{ll}
    \mathds{F}_p\times\mathds{F}_p & \mbox{if $d$ is a square modulo $p$}\\
    \mathds{F}_{p^2} & \mbox{if $d$ is not a square modulo $p$.}
\end{array}\right.
\end{equation*}
If $d$ is a square modulo $p$ we consider the trace of the reduction of $\A$ to be an element of $\mathds{F}_p$ using the projection on the first coordinate.

\begin{proposition}
\label{propositionSUZ}
Let $n\geq3$ and $\Pi$ be a finitely generated subgroup of $\SU(\I_n,\sigma;\mathds{Z}[\sqrt{d}])$ which is Zariski-dense in $\SL(n,\mathds{R})$. Then the reduction of $\Tr(\Pi)\subset\mathds{Z}[\sqrt{d}]$ is equal to\begin{equation*}
\left\{\begin{array}{ll}
    \mathds{F}_p & \mbox{if $d$ is a square modulo $p$} \\
    \mathds{F}_{p^2} & \mbox{if $d$ is not a square modulo $p$}
\end{array}\right.
\end{equation*}for almost all primes $p$.
\end{proposition}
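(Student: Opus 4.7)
The plan is to apply the Strong Approximation Theorem (Theorem~\ref{Strongapproximation}) to the $\mathds{Q}$-algebraic group $\G$ whose $\mathds{Q}$-points form $\SU(\I_n,\sigma;\mathds{Q}(\sqrt{d}))$. First I would check that $\G$ satisfies the hypotheses of that theorem: it is a $\overline{\mathds{Q}}/\mathds{Q}$-form of $\SL_n$, so it is connected, simply connected, and absolutely almost simple. Since $\G(\mathds{R})\simeq\SL(n,\mathds{R})$, Zariski-density of $\Pi$ in $\SL(n,\mathds{R})$ is equivalent to Zariski-density of $\Pi$ inside $\G$.

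Next, I would reduce to the case where $\Pi$ is finitely generated. This is harmless for our purposes: any Zariski-dense subgroup contains a finitely generated Zariski-dense subgroup, and replacing $\Pi$ by such a subgroup can only shrink $\Tr(\Pi)$, so it suffices to prove the statement under this assumption. Theorem~\ref{Strongapproximation} then guarantees that for almost all primes $p$, the reduction map $\Pi\to\underline{\G}_p(\mathds{F}_p)$ is surjective.

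The third step is to identify $\underline{\G}_p(\mathds{F}_p)$ using the explicit description given in the paragraph just before the statement: for almost all $p$, one has
\[
\underline{\G}_p(\mathds{F}_p)\simeq\begin{cases}\SL(n,\mathds{F}_p)&\text{if $d$ is a square modulo $p$,}\\ \SU(\I_n,\mathds{F}_p)&\text{otherwise.}\end{cases}
\]
Combining the surjectivity of the reduction with Lemma~\ref{traceSL} (which gives $\Tr(\SL(n,\mathds{F}_p))=\mathds{F}_p$) and Lemma~\ref{traceSU} (which gives $\Tr(\SU(\I_n,\mathds{F}_p))=\mathds{F}_{p^2}$) then yields the stated equality.

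The step that deserves the most care is not computational but bookkeeping: one must verify that the reduction modulo $p$ of $\Tr(\pi)\in\mathds{Z}[\sqrt{d}]$ agrees, under the relevant identification of $\mathds{F}_p[X]/(X^2-d)$ with $\mathds{F}_p\times\mathds{F}_p$ or $\mathds{F}_{p^2}$, with the trace of the reduction of $\pi$ inside $\underline{\G}_p(\mathds{F}_p)$. In the split case, where $d$ is a square modulo $p$, the trace a priori lives in $\mathds{F}_p\times\mathds{F}_p$, and the convention (established just before the statement) of taking the first projection must match the isomorphism $\underline{\G}_p(\mathds{F}_p)\simeq\SL(n,\mathds{F}_p)$ obtained by that same projection. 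This compatibility is automatic from the construction but must be recorded explicitly to close the argument.
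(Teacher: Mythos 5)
Your proof is correct and follows essentially the same route as the paper: Strong Approximation applied to the $\mathds{Q}$-form $\SU(\I_n,\sigma;\mathds{Q}(\sqrt{d}))$ of $\SL_n$, identification of the reduction mod $p$ as $\SL(n,\mathds{F}_p)$ or $\SU(\I_n,\mathds{F}_p)$, and then Lemmas \ref{traceSL} and \ref{traceSU}. Your explicit reduction to a finitely generated Zariski-dense subgroup (valid since shrinking $\Pi$ only shrinks $\Tr(\Pi)$, and the claim is that the reduction is everything) and your check of the trace identification in the split case are details the paper leaves implicit, but they do not change the argument.
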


\begin{proof}
By Strong-approximation (Theorem \ref{Strongapproximation}), for almost all primes $p$, the reduction of $\Pi$ is equal to $\SL(n,\mathds{F}_p)$ or to $\SU(\I_n,\sigma;\mathds{Z}[\sqrt{d}])$ depending on $p$. Lemma \ref{traceSL} and \ref{traceSU} prove the result.
\end{proof}

Let $n$ be even. Let $A$ be a quaternion algebra over $\mathds{Q}$, $\overline{\phantom{s}}$ its conjugation and $\mathcal{O}$ an order of $A$. For $p$ sufficiently large, the reduction of the group $\SU(\I_{\frac{n}{2}},\overline{\phantom{s}}\otimes\sigma;A\otimes_{\mathds{Q}}\mathds{Q}(\sqrt{d}))$ is
\begin{equation*}
    \left\{\begin{array}{ll}
    \SL(n,\mathds{F}_p) & \mbox{if $d$ is a square modulo $p$} \\
    \SU(\I_n,\mathds{F}_p) & \mbox{if $d$ is not a square modulo $p$.}
\end{array}\right.
\end{equation*}

\begin{proposition}
\label{propositionSUO}
Let $n$ be even and $\Pi$ be a finitely generated subgroup of $\SU(\I_{\frac{n}{2}},\overline{\phantom{s}}\otimes\sigma;\mathcal{O}\otimes_{\mathds{Z}}\mathds{Z}[\sqrt{d}])$ which is Zariski-dense in $\SL(n,\mathds{R})$. Then the reduction of $\Tr(\Pi)\subset\mathds{Z}[\sqrt{d}]$ is equal to
\begin{equation*}
\left\{\begin{array}{ll}
    \mathds{F}_p & \mbox{if $d$ is a square modulo $p$} \\
    \mathds{F}_{p^2} & \mbox{if $d$ is not a square modulo $p$}
\end{array}\right.
\end{equation*}for almost all primes $p$.
\end{proposition}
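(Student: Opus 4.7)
The plan is to follow essentially verbatim the strategy of Proposition \ref{propositionSUZ}, simply replacing the inner $\overline{\mathds{Q}}/\mathds{Q}$-form whose $\mathds{Q}$-points are $\SU(\I_n,\sigma;\mathds{Q}(\sqrt{d}))$ by the one whose $\mathds{Q}$-points are $\SU(\I_{n/2},\overline{\phantom{s}}\otimes\sigma;A\otimes_{\mathds{Q}}\mathds{Q}(\sqrt{d}))$, where $A$ is the quaternion algebra containing $\mathcal{O}$. First I would verify that this $\mathds{Q}$-algebraic group $\G$ is a $\overline{\mathds{Q}}/\mathds{Q}$-form of $\SL_n$ (this was established in Proposition \ref{proposition23} in the inner, not $\tau_n$-compatible case), hence that $\G$ is connected, simply connected and absolutely almost simple, which is exactly the hypothesis needed for Theorem \ref{Strongapproximation}.

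Next, since $\Pi \subset \G(\mathds{Q})$ and $\Pi$ is Zariski-dense in $\SL(n,\mathds{R})$, $\Pi$ is Zariski-dense in $\G$. The Strong Approximation Theorem then gives that, for almost all primes $p$, the reduction of $\Pi$ modulo $p$ coincides with $\underline{\G}_p(\mathds{F}_p)$. Using the identification recalled just before the statement of the proposition, namely that for $p$ large enough (so that $p$ does not divide the discriminant of $\mathcal{O}$ and $A$ splits at $p$) one has $\mathcal{O}\otimes\mathds{F}_p\simeq\M_2(\mathds{F}_p)$, this reduction is $\SL_n(\mathds{F}_p)$ when $d$ is a square modulo $p$ and $\SU(\I_n,\mathds{F}_p)$ otherwise.

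Finally I would observe that for $\gamma\in\Pi$, its trace in $\GL(n,\mathds{R})$ (obtained via the embedding $\mathcal{O}\hookrightarrow\M_2(\mathds{R})$) coincides with the reduced trace of $\gamma$ as an element of $\M_{n/2}(A\otimes\mathds{Q}(\sqrt{d}))$, which lies in $\mathds{Z}[\sqrt{d}]$. Since reduction modulo $p$ is a ring homomorphism compatible with trace, the reduction of $\Tr(\Pi)$ equals the set of traces of the reduction of $\Pi$. Applying Lemmas \ref{traceSL} and \ref{traceSU} to $\SL_n(\mathds{F}_p)$ or $\SU(\I_n,\mathds{F}_p)$ respectively then gives the claimed equality.

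The main obstacle, and the only point that requires any genuine verification beyond what is already in the proof of Proposition \ref{propositionSUZ}, is the identification of the reduction $\underline{\G}_p$: one must check that for almost all $p$, the conjugation $\overline{\phantom{s}}\otimes\sigma$ behaves correctly after tensoring with $\mathds{F}_p$ to produce either $\SL_n(\mathds{F}_p)$ or $\SU(\I_n,\mathds{F}_p)$. This follows from the isomorphism $\mathcal{O}\otimes\mathds{F}_p\simeq\M_2(\mathds{F}_p)$ (for $p$ outside a finite set of primes), together with the splitting of $\mathds{F}_p[\X]/(\X^2-d)$ described in the paragraph preceding Proposition \ref{propositionSUZ}; once these are combined, the argument of \ref{propositionSUZ} transfers word for word.
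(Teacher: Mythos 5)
Your proposal is correct and follows exactly the route the paper takes: the paper's proof of this proposition is literally ``the same as Proposition~\ref{propositionSUZ}'', i.e.\ Strong Approximation (Theorem~\ref{Strongapproximation}) applied to the form $\SU(\I_{n/2},\overline{\phantom{s}}\otimes\sigma;A\otimes_{\mathds{Q}}\mathds{Q}(\sqrt{d}))$, combined with the identification of its reduction modulo $p$ (stated in the paragraph preceding the proposition) and Lemmas~\ref{traceSL} and~\ref{traceSU}. The extra care you take in checking the hypotheses of Strong Approximation and the behaviour of $\overline{\phantom{s}}\otimes\sigma$ after reduction is exactly the verification the paper leaves implicit.
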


\begin{proof}
The proof is the same as Proposition \ref{propositionSUZ}.
\end{proof}

Denote by $\Sp(n,\mathds{F}_p)$ the group
$\{\M\in\SL(n,\mathds{F}_{p})|\M^\top\J\M=\J\}$ where $\J$ is a block-diagonal matrix with $n$ block on the diagonal all equal to
$$\begin{pmatrix}
0&1\\
-1&0
\end{pmatrix}.$$ For $p$ sufficiently large, the reduction of $\SU(\I_{\frac{n}{2}},\overline{\phantom{s}};A)$ has for $\mathds{F}_p$-points $\Sp(n,\mathds{F}_p)$. To see this, recall that $$\SU(\I_{\frac{n}{2}},\overline{\phantom{s}};A)=\{\M\in\SL(\text{\tiny\(\frac{n}{2}\)},A)\ |\ \overline{\M}^\top\M=\I_{\frac{n}{2}}\}.$$ If we embed $A$ in $\M_2(\mathds{C})$ using $A\hookrightarrow A\otimes_{\mathds{Q}}\mathds{C}\simeq\M_2(\mathds{C})$ then the conjugation on $A$ agrees with $$\M_2(\mathds{C})\to\M_2(\mathds{C}),\ \X\mapsto\begin{pmatrix}
0&1\\
-1&0
\end{pmatrix}\X^\top\begin{pmatrix}
0&-1\\
1&0
\end{pmatrix}.$$ Hence the equation defining $\SU(\I_{\frac{n}{2}},\overline{\phantom{s}};A)$ becomes $\M^\top\J\M=\J$.

\begin{lemma}
\label{traceSp}
For any prime $p$ and $n\geq2$ even, $\Tr(\Sp(n,\mathds{F}_p))=\mathbb{F}_p$.
\end{lemma}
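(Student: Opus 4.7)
The plan is to mimic the argument used for Lemma \ref{traceSL}: produce a small explicit family of symplectic matrices whose traces already cover $\mathds{F}_p$, then embed them block-diagonally into $\Sp(n,\mathds{F}_p)$ for $n\geq 4$.

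First I would observe that for $n=2$ the group $\Sp(2,\mathds{F}_p)$ coincides with $\SL(2,\mathds{F}_p)$, since the condition $\M^\top\J\M=\J$ on $2\times 2$ matrices with $\J=\begin{psmallmatrix}0&1\\-1&0\end{psmallmatrix}$ is equivalent to $\det\M=1$. So the matrix
\begin{equation*}
\M_a=\begin{pmatrix} a & 1 \\ -1 & 0 \end{pmatrix}
\end{equation*}
lies in $\Sp(2,\mathds{F}_p)$ for every $a\in\mathds{F}_p$ and has trace $a$. This already establishes the case $n=2$.

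For $n\geq 4$ even, I would use the block-diagonal embedding
\begin{equation*}
\Sp(2,\mathds{F}_p)\hookrightarrow \Sp(n,\mathds{F}_p),\qquad \M\mapsto\begin{pmatrix}\M & 0 \\ 0 & \I_{n-2}\end{pmatrix}.
\end{equation*}
This is well-defined because $\J$ is block diagonal with $2\times 2$ standard symplectic blocks, so $\I_{n-2}$ trivially preserves the $(n-2)\times(n-2)$ lower block of $\J$ and $\M$ preserves the upper one. The trace of the image of $\M_a$ under this embedding is $a+(n-2)$. As $a$ varies over $\mathds{F}_p$, the quantity $a+(n-2)$ also varies over all of $\mathds{F}_p$, giving $\Tr(\Sp(n,\mathds{F}_p))=\mathds{F}_p$.

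There is no real obstacle here: the only thing to verify is that the block-diagonal embedding lands in $\Sp(n,\mathds{F}_p)$, which is immediate from the block form of $\J$. The argument is essentially identical to the one given for $\SL(n,\mathds{F}_p)$ in Lemma \ref{traceSL}, except that one must choose the explicit generator $\M_a$ already inside the symplectic group $\Sp(2,\mathds{F}_p)=\SL(2,\mathds{F}_p)$.
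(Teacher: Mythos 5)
Your proof is correct and is essentially the paper's argument: the paper likewise reduces to the trace-$a$ matrix $\begin{psmallmatrix}a&1\\-1&0\end{psmallmatrix}$ from Lemma \ref{traceSL} and the block-diagonal embedding $\M\mapsto\mathrm{diag}(\M,\I_{n-2})$ into $\Sp(n,\mathds{F}_p)$, which works because $\J$ is block-diagonal with standard $2\times2$ symplectic blocks. Your added remarks (that $\Sp(2,\mathds{F}_p)=\SL(2,\mathds{F}_p)$ and that the shift by $n-2$ is harmless) just make explicit what the paper leaves implicit.
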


\begin{proof}
It follows from Lemma \ref{traceSL} and from the embedding of $\SL(2,\mathds{F}_p)$ in $\Sp(n,\mathds{F}_p)$ as a block.
\end{proof}

\begin{proposition}
\label{propositionSp}
Let $n\geq2$ be even and $\Pi$ be a finitely generated subgroup of $\SU(\I_{\frac{n}{2}},\overline{\phantom{s}};\mathcal{O})$ which is Zariski-dense in $\Sp(n,\mathds{R})$. Then the reduction of $\Tr(\Pi)\subset\mathds{Z}$ modulo $p$ is equal to $\mathds{F}_p$ for almost all primes $p$.
\end{proposition}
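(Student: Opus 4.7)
The plan is to mirror the arguments used for Propositions \ref{propositionSUZ} and \ref{propositionSUO}, the only new ingredient being the identification of the mod $p$ reduction of the relevant $\mathds{Q}$-form of $\Sp_n$ (already established in the paragraph preceding Lemma \ref{traceSp}) and Lemma \ref{traceSp} itself.

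First I would invoke Strong Approximation (Theorem \ref{Strongapproximation}). The $\mathds{Q}$-algebraic group $\mathbf{G}$ whose $\mathds{Z}$-points are commensurable with $\SU(\I_{\frac{n}{2}},\overline{\phantom{s}};\mathcal{O})$ is connected, simply connected and absolutely almost simple (it is an inner $\overline{\mathds{Q}}/\mathds{Q}$-form of $\Sp_n$). Since $\Pi$ is Zariski-dense in $\Sp(n,\mathds{R})\simeq\mathbf{G}(\mathds{R})$, the Zariski closure of $\Pi$ in $\mathbf{G}$ is a $\mathds{Q}$-algebraic subgroup whose $\mathds{R}$-points contain $\mathbf{G}(\mathds{R})$, and hence equals all of $\mathbf{G}$. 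Thus $\Pi$ is a finitely generated Zariski-dense subgroup of $\mathbf{G}(\mathds{Q})$, and Theorem \ref{Strongapproximation} applies.

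The conclusion of Strong Approximation gives that for almost all primes $p$ the reduction of $\Pi$ modulo $p$ equals $\underline{\mathbf{G}}_p(\mathds{F}_p)$. As discussed just before Lemma \ref{traceSp}, for $p$ sufficiently large this finite group is exactly $\Sp(n,\mathds{F}_p)$. Therefore the reduction modulo $p$ of $\Tr(\Pi)\subset\mathds{Z}$ contains $\Tr(\Sp(n,\mathds{F}_p))$, which by Lemma \ref{traceSp} is all of $\mathds{F}_p$. Since the reduction of the integer set $\Tr(\Pi)$ is contained in $\mathds{F}_p$, equality follows.

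There is essentially no obstacle here: the only thing to watch is that Strong Approximation is applied to $\Pi$ as a subgroup of the correct simply connected $\mathds{Q}$-form, which is immediate because $\Sp_n$ is itself simply connected and absolutely almost simple. Everything else is a direct combination of Theorem \ref{Strongapproximation}, the explicit reduction of the $\mathds{Q}$-form, and Lemma \ref{traceSp}.
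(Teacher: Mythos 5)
Your proposal is correct and follows exactly the paper's argument: apply Strong Approximation to identify the mod $p$ reduction of $\Pi$ with $\Sp(n,\mathds{F}_p)$ for almost all $p$, then conclude with Lemma \ref{traceSp}. The extra details you supply (verifying that the relevant $\mathds{Q}$-form of $\Sp_n$ is simply connected and absolutely almost simple, and that $\Pi$ is Zariski-dense in it) are precisely the hypotheses the paper leaves implicit, so there is no substantive difference.
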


\begin{proof}
By the Strong-approximation Theorem, for almost all primes the reduction of $\Pi$ is equal to $\Sp(n,\mathds{F}_p)$. Lemma \ref{traceSp} proves that the reduction of $\Tr(\Pi)$ is all of $\mathds{F}_p$ for almost all primes
\end{proof}

Let $n\geq4$ and $p\neq2$ be prime. Denote by $\SO(\I_n,\mathds{F}_p)$ the group $\{\M\in\SL(n,\mathds{F}_p)|\M^\top\M=\I_n\}$ and by $\Omega(\I_n,\mathds{F}_p)$ the commutator subgroup of $\SO(\I_n,\mathds{F}_p)$. Let $\Q\in\SL(n,\mathds{Q})$ be a symmetric matrix. If $n$ is odd, for $p$ sufficiently large, the reduction of $\SO(\Q,\mathds{Q})$ modulo $p$ has for $\mathds{F}_p$-points $\SO(\I_n,\mathds{F}_p)$.

\begin{lemma}
\label{traceOmega}
For any prime $p\neq2$ and $n\geq4$, $\Tr(\Omega(\I_n,\mathds{F}_p))=\mathds{F}_p$.
\end{lemma}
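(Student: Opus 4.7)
The plan is to reduce to $n=4$ by a block embedding, and then to exploit the exceptional isogeny $\mathrm{Spin}(\I_4)\cong\SL_2\times\SL_2$ to express traces in $\Omega(\I_4,\mathds{F}_p)$ as products of two $\SL_2$-traces, which are controlled by the already established Lemma \ref{traceSL}.

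For $n\ge 5$, I would use the embedding
\[
    \SO(\I_4,\mathds{F}_p)\hookrightarrow \SO(\I_n,\mathds{F}_p),\qquad M\mapsto \begin{pmatrix}M & 0\\ 0 & \I_{n-4}\end{pmatrix}.
\]
The appended $\I_{n-4}$ block has trivial spinor norm, so this maps $\Omega(\I_4,\mathds{F}_p)$ into $\Omega(\I_n,\mathds{F}_p)$, and it shifts traces by the constant $n-4\in\mathds{F}_p$. The statement for general $n$ therefore follows from the case $n=4$.

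For $n=4$, I would first observe that over $\mathds{F}_p$ (any odd $p$) the form $\I_4$ is equivalent to the determinant quadratic form on $M_2(\mathds{F}_p)$, because both are non-degenerate of rank $4$ with square discriminant, and over a finite field of odd characteristic these two invariants classify non-degenerate quadratic forms. Under this identification the map
\[
    \Phi:\SL_2(\mathds{F}_p)\times\SL_2(\mathds{F}_p)\longrightarrow \SO(\det,\mathds{F}_p),\qquad (A,B)\mapsto(X\mapsto AXB^{-1})
\]
is the standard realisation of the double cover $\mathrm{Spin}(\I_4,\mathds{F}_p)\to \SO(\I_4,\mathds{F}_p)$ with kernel $\{\pm(\I_2,\I_2)\}$. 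By the long exact cohomology sequence of $1\to\mu_2\to\mathrm{Spin}\to\SO\to 1$, the image of $\mathrm{Spin}(\mathds{F}_p)$ in $\SO(\mathds{F}_p)$ is the kernel of the spinor norm, i.e.\ $\Omega(\I_4,\mathds{F}_p)$; so $\Phi$ surjects onto $\Omega(\I_4,\mathds{F}_p)$. Viewing $M_2(\mathds{F}_p)\cong\mathds{F}_p^2\otimes(\mathds{F}_p^2)^*$, the endomorphism $X\mapsto AXB^{-1}$ factorises as $A\otimes (B^{-1})^t$, so its trace equals $\Tr(A)\Tr(B^{-1})=\Tr(A)\Tr(B)$, the last equality coming from Cayley--Hamilton in $\SL_2$ ($B+B^{-1}=\Tr(B)\,\I_2$). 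By Lemma \ref{traceSL}, for every $c\in\mathds{F}_p$ one can pick $A\in\SL_2(\mathds{F}_p)$ with $\Tr(A)=c$ and $B\in\SL_2(\mathds{F}_p)$ with $\Tr(B)=1$, yielding an element of $\Omega(\I_4,\mathds{F}_p)$ of trace $c$.

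The subtle step, which forms the main obstacle, is the identification of the image of $\Phi$ with $\Omega(\I_4,\mathds{F}_p)$ rather than just with a finite-index subgroup of $\SO(\I_4,\mathds{F}_p)$; this relies on the spinor-norm description of the image of $\mathrm{Spin}(\mathds{F}_p)$ in $\SO(\mathds{F}_p)$. Once this is granted, the remaining trace computation and the reduction from $n\ge 5$ to $n=4$ are entirely routine.
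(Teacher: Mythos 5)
Your proof is correct, and it takes a genuinely different route from the paper's. The paper argues by brute force at $n=4$: it writes down an explicit matrix $\M$ preserving the hyperbolic form $\J$ (congruent to $\I_4$), notes that $\M^2$ must land in the index-$2$ subgroup $\Omega(\I_4,\mathds{F}_p)$, and computes $\Tr(\M^2)=4-2a$ directly, letting $a$ range over $\mathds{F}_p^{\times}$. You instead use the exceptional isogeny: after identifying $\I_4$ with the determinant form on $\M_2(\mathds{F}_p)$ (legitimate, since rank and discriminant classify forms over $\mathds{F}_p$, $p$ odd, and both discriminants are squares), the map $(A,B)\mapsto(X\mapsto AXB^{-1})$ realises $\mathrm{Spin}_4\to\SO_4$, its image is the kernel of the spinor norm, and the trace is the product $\Tr(A)\Tr(B)$, so Lemma \ref{traceSL} finishes the job. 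The one point you should make explicit is why the kernel of the spinor norm coincides with $\Omega(\I_4,\mathds{F}_p)$ \emph{as the paper defines it}, namely as the commutator subgroup of $\SO(\I_4,\mathds{F}_p)$: both subgroups have index $2$ (the latter by the fact from Suzuki that the paper invokes, the former because the spinor norm is surjective on $\SO$ of a universal form), and the commutator subgroup is contained in the kernel of any homomorphism to an abelian group, so they agree. With that remark added, your argument is complete; it is conceptually cleaner and reuses Lemma \ref{traceSL}, at the cost of importing the spinor-norm/Galois-cohomology description of the image of $\mathrm{Spin}(\mathds{F}_p)$, whereas the paper's computation is entirely elementary and self-contained.
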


\begin{proof}
We can embed $\Omega(\I_4,\mathds{F}_p)$ in $\Omega(\I_n,\mathds{F}_p)$ as a block. Hence is suffices to prove the result for $n=4$. Pick $a\in\mathds{F}_p^{\times}$. Let
$$\M=\begin{pmatrix}
0&0&-a&a\\
0&0&1&0\\
1&1&0&0\\
a^{-1}&0&0&0
\end{pmatrix}.$$ We can check that $\Det(\M)=1$ and that $\M^\top\J\M=\J$ where
$$\J=\begin{pmatrix}
0&0&0&1\\
0&0&1&0\\
0&1&0&0\\
1&0&0&0
\end{pmatrix}.$$ Since $\Det(\J)=1$, $\J$ is congruent to $\I_4$. It follows that a conjugate of $\M$ lies in $\SO(\I_4,\mathds{F}_p)$. As we can see in \cite{Suzuki_GroupTheoryI} paragraph 5 chapter 3, $\Omega(\I_4,\mathds{F}_p)$ is of index 2 in $\SO(\I_4,\mathds{F}_p)$. Hence a conjugate of $\M^2$ lies in $\Omega(\I_4,\mathds{F}_p)$. Since $\Tr(\M^2)=-2a+4$ and $\Tr(\I_4)=4$, $\Tr(\Omega(\I_4,\mathds{F}_p))=\mathds{F}_p$.
\end{proof}

\begin{proposition}
\label{propositionSO}
Let $n\geq5$ be odd and $\Q\in\SL(n,\mathds{Q})$ a symmetric matrix. Let $\Pi$ be a finitely generated subgroup of $\SO(\Q,\mathds{Z})$ which is Zariski-dense in $\SO(\Q,\mathds{R})$. Then the reduction of $\Tr(\Pi)\subset\mathds{Z}$ modulo $p$ is equal to $\mathds{F}_p$ for almost all primes $p$.
\end{proposition}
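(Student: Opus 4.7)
The plan is to follow the template of Propositions \ref{propositionSUZ}, \ref{propositionSUO}, and \ref{propositionSp}: reduce $\Pi$ modulo almost every prime using the Strong-approximation Theorem, identify the reduction as a sufficiently large subgroup of the mod-$p$ fibre, and then apply Lemma \ref{traceOmega}. The subtlety absent from the previous three propositions is that $\SO_n$ is not simply connected, so Theorem \ref{Strongapproximation} cannot be applied to $\Pi$ directly; one must detour through the $\mathrm{Spin}$-cover.

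First, since Zariski-density in the connected algebraic group $\SO(\Q)$ is already witnessed by finitely many elements, I would shrink $\Pi$ and assume it is finitely generated. Let $\pi:\mathrm{Spin}(\Q)\to\SO(\Q)$ denote the simply connected cover, a $\mathds{Q}$-isogeny with central kernel of order $2$, and set $\widetilde{\Pi}=\pi^{-1}(\Pi)\cap\mathrm{Spin}(\Q)(\mathds{Q})$. It is a finitely generated Zariski-dense subgroup of $\mathrm{Spin}(\Q)(\mathds{Q})$ whose matrix entries have bounded denominators, so reduction modulo $p$ is well defined outside finitely many primes. Applying Theorem \ref{Strongapproximation} to $\widetilde{\Pi}$ inside the connected, simply connected, absolutely almost simple $\mathds{Q}$-group $\mathrm{Spin}(\Q)$, for almost all primes $p$ the reduction of $\widetilde{\Pi}$ equals $\underline{\mathrm{Spin}(\Q)}_p(\mathds{F}_p)$. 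Since $n$ is odd, any two non-degenerate quadratic forms over $\mathds{F}_p$ differ by a scalar (which leaves the orthogonal and spin groups unchanged), so this group is isomorphic to $\mathrm{Spin}(\I_n,\mathds{F}_p)$ for $p$ odd and sufficiently large.

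Under the mod-$p$ covering map, the image of $\mathrm{Spin}(\I_n,\mathds{F}_p)$ in $\SO(\I_n,\mathds{F}_p)$ is precisely the commutator subgroup $\Omega(\I_n,\mathds{F}_p)$, a classical computation (see e.g.\ \cite{Suzuki_GroupTheoryI}). Consequently, for almost all primes $p$ the reduction of $\Pi$ modulo $p$ contains $\Omega(\I_n,\mathds{F}_p)$, so Lemma \ref{traceOmega} gives $\Tr(\Pi)\bmod p\supseteq\Tr(\Omega(\I_n,\mathds{F}_p))=\mathds{F}_p$, which is the desired equality. I expect the only substantive technical point to be the $\mathrm{Spin}$-lift forced by non-simple-connectedness of $\SO_n$; once that lift is in hand, the argument parallels the preceding propositions line for line.
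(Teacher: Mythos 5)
Your argument is correct in substance but follows a genuinely different route from the paper. The paper handles the failure of simple connectedness of $\SO_n$ not by passing to the spin cover, but by replacing Theorem \ref{Strongapproximation} with Nori's theorem on Zariski-dense subgroups of $\GL_n(\mathds{Z})$ (Theorem 5.1 in \cite{Nori_SubgroupsofGLn(Fp)}), which directly yields that the reduction of $\Pi$ modulo almost every prime contains $\Omega(\I_n,\mathds{F}_p)$; from there both proofs conclude identically via Lemma \ref{traceOmega}. Your spin-cover detour buys uniformity --- it reuses the same strong approximation theorem already invoked for $\SL_n$ and $\Sp_n$ instead of importing a second deep input --- but it costs two points of care that you pass over quickly. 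First, not every element of $\Pi$ lifts to a $\mathds{Q}$-point of $\mathrm{Spin}(\Q)$: the obstruction is the spinor norm in $\mathds{Q}^{*}/(\mathds{Q}^{*})^{2}$, so $\pi(\widetilde{\Pi})$ is a priori only a finite-index subgroup of the finitely generated $\Pi$; this finite-index statement is exactly what is needed to see that $\widetilde{\Pi}$ is Zariski-dense in $\mathrm{Spin}(\Q)$ rather than, say, contained in the kernel of $\pi$, and it should be said explicitly. Second, one must fix an integral model of $\mathrm{Spin}(\Q)$ (for instance inside the even Clifford algebra) with respect to which the isogeny $\pi$ is defined over $\mathds{Z}[1/m]$ for some $m$, so that reduction modulo $p$ commutes with $\pi$ for almost all $p$ and the image of $\mathrm{Spin}(\I_n,\mathds{F}_p)$ in $\SO(\I_n,\mathds{F}_p)$ is indeed the kernel of the spinor norm, i.e.\ $\Omega(\I_n,\mathds{F}_p)$. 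With those two remarks supplied your proof is complete; the paper's appeal to Nori's theorem sidesteps both issues at the price of the extra reference.
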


\begin{proof}
Since the algebraic group defined by $\G(K)=\SO(\Q,K)$ is not simply-connected, we cannot apply Theorem \ref{Strongapproximation}. Nevertheless Theorem 5.1 in Nori \cite{Nori_SubgroupsofGLn(Fp)} shows that for almost all primes $p$, the reduction of $\Pi$ modulo $p$ contains $\Omega(\I_n,\mathds{F}_p)$\footnote{Indeed Remark 3.6 in \cite{Nori_SubgroupsofGLn(Fp)} shows that $A(\mathds{F}_p)^{+}$ is of index 2 in $\SO(\I_n,\mathds{F}_p)$. Note also that $\Omega(\I_n,\mathds{F}_p)$ is of index 2 in $\SO(\I_n,\mathds{F}_p)$, as we can see in \cite{Suzuki_GroupTheoryI} paragraph 5 chapter 3. Being the commutator subgroup, $\Omega(\I_n,\mathds{F}_p)$ is the only index 2 subgroup of $\SO(\I_n,\mathds{F}_p)$.}. Lemma \ref{traceOmega} proves that the reduction of $\Tr(\Pi)$ is equal to $\mathds{F}_p$ for almost all primes $p$.
\end{proof}

\subsection{Certifying the Zariski-density of the bending}

We can now prove Theorems \ref{theorem1} and \ref{theorem2}. Let $A$ be a quaternion division algebra over $\mathds{Q}$ with basis $\{1,i,j,k\}$ satisfying $i^2=a$ and $j^2=b$, that splits over $\mathds{R}$ and let $\mathcal{O}=\mathds{Z}[1,i,j,k]$. We embed $\mathcal{O}$ in $\M_2(\overline{\mathds{Q}})$ using the map defined in (\ref{equationembedding}). Then $\mathcal{O}^1$ is commensurable with $\prescript{}{\xi}{\SL_2(\mathds{Z})}$ for $\xi:\Gal(\overline{\mathds{Q}}/\mathds{Q})\to\PSL(2,\overline{\mathds{Q}})$ a $1$-cocycle. Let $S$ be a closed orientable surface of genus at least 2 and suppose there exists a faithful representation $j:\pi_1(S)\to\PSL(1,\mathcal{O})$. Denote by $\gamma$ the simple closed curve on $S$ which image under $j$ is diagonal, as provided by Lemma \ref{lemma20}. Let $\rho=\tau_n\circ j$. Let $\zeta:\Gal(\overline{\mathds{Q}}/\mathds{Q})\to\Aut(\SL_n(\overline{\mathds{Q}}))$ be a $1$-cocycle $\tau_n$-compatible with $\xi$ (see Definition \ref{definitioncompatible}).

We bend $\rho$ along the curve $\gamma$ (see Section \ref{sectionbending} for the description of bending). When it's possible, we exhibit a diagonal matrix $\B$ with positive coefficients which is in $\prescript{}{\zeta}{\SL_n(\mathds{Z})}$, so that $\rho_{\B}(\pi_1(S))\subset \PP(\prescript{}{\zeta}{\SL_n(\mathds{Z})})$ but such that $\rho_{\B}(\pi_1(S))$ is Zariski-dense in $\PSL(n,\mathds{R})$. Since $\B$ has positive coefficients, there is a continuous path from $\I_n$ to $\B$ inside the centralizer of $\gamma$ and thus there is a continuous path from $\rho$ to $\rho_{\B}$ which guarantees that $\rho_{\B}$ is a Hitchin representation. We can thus use the classification of the Zariski-closures of Hitchin representations.

\begin{theorem}[Sambarino \cite{Sambarino_InfinitesimalZariskiClosure}]
\label{theoremGuichard}
Let $\rho:\pi_1(S)\to\PSL(n,\mathds{R})$ be a Hitchin representation. Then the Zariski-closure of $\rho$ is either $\PSL(n,\mathds{R})$ or conjugate to one of the following:
\begin{itemize}
    \setlength\itemsep{0cm}
    \item $\tau_n(\PGL(2,\mathds{R}))\cap\PSL(n,\mathds{R})$
    \item $\PSp(2k,\mathds{R})$ if $n=2k$ for all $k\geq1$,
    \item $\SO(\I_{k+1,k},\mathds{R})$ if $n=2k+1$ for all $k\geq1$,
    \item \emph{$\textbf{G}_2(\mathds{R})$} if $n=7$.
\end{itemize}
\end{theorem}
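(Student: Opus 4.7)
The plan is to combine the Anosov property of Hitchin representations with Dynkin's classification of reductive subalgebras of $\mathfrak{sl}_n$ containing a principal $\mathfrak{sl}_2$-triple. Let $\HH$ denote the Zariski closure of $\rho(\pi_1(S))$ in $\PSL(n,\mathds{R})$ and let $\mathfrak{h}$ be its Lie algebra.

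First I would establish that $\HH$ is reductive and that its identity component $\HH^{\circ}$ acts irreducibly on $\mathds{R}^n$. Hitchin representations are Anosov with respect to the minimal parabolic of $\PSL(n,\mathds{R})$, so in particular they are semisimple and $\HH$ is a reductive real algebraic group. The Anosov structure provides a continuous $\rho$-equivariant hyperconvex Frenet curve $\xi:\partial\pi_1(S)\to\mathcal{F}$ into the complete flag variety; a proper non-trivial $\HH^{\circ}$-invariant subspace of $\mathds{R}^n$ would force $\xi$ to factor through a flag variety of strictly smaller dimension, contradicting the strict hyperconvexity of the curve.

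Next I would argue that $\mathfrak{h}$ contains a principal $\mathfrak{sl}_2$-subalgebra. The Fuchsian points $[\tau_n\circ j]$ lie in the Hitchin component, and their Zariski closure is exactly $\tau_n(\PSL(2,\mathds{R}))$, whose Lie algebra is a principal $\mathfrak{sl}_2$ by construction. Since Zariski closure is not a continuous function of $\rho$, one cannot simply deform; instead, one uses that the spectral gaps along $\rho(\gamma)$ for every hyperbolic $\gamma\in\pi_1(S)$ are comparable to those of a regular semisimple element in the principal $\mathfrak{sl}_2$, so the Jordan projection of $\rho(\pi_1(S))$ must lie along the principal direction. This pins down a principal semisimple element inside $\mathfrak{h}$, together with its surrounding $\mathfrak{sl}_2$-triple.

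Finally I would invoke Dynkin's classification of irreducible complex reductive subalgebras of $\mathfrak{sl}_n(\mathds{C})$ that contain a principal $\mathfrak{sl}_2$: these are precisely the image of the irreducible embedding of $\mathfrak{sl}_2$, together with $\mathfrak{so}_n$, $\mathfrak{sp}_{2k}$ when $n=2k$, $\mathfrak{g}_2$ when $n=7$, and $\mathfrak{sl}_n$ itself. The positivity of a Hitchin representation in the sense of Fock--Goncharov then selects the split real form of each candidate, producing exactly the list in the statement. The main obstacle is the middle step: propagating the existence of a principal $\mathfrak{sl}_2$ inside $\mathfrak{h}$ from the Fuchsian locus to all of the Hitchin component. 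This cannot come from a soft continuity argument and must instead be extracted from fine asymptotic information about the Cartan and Jordan projections supplied by the Anosov/hyperconvexity structure, which is the technical heart of the infinitesimal Zariski closure analysis of Sambarino.
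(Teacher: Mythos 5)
This statement is not proved in the paper at all: it is quoted as an external result of Guichard (see also Sambarino), and the author explicitly remarks right after it that only a weaker version is needed for the applications, namely that for representations obtained by bending the Zariski closure visibly contains a principal $\mathfrak{sl}_2$, after which Dynkin's tables of irreducible subalgebras containing a principal $\mathfrak{sl}_2$ give the list directly. So there is no ``paper proof'' to match yours against; what can be assessed is whether your outline would actually establish the cited theorem.

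Your architecture (reductivity and irreducibility via the hyperconvex Frenet curve, then existence of a principal $\mathfrak{sl}_2$ in the Zariski closure, then Dynkin's classification plus splitness of the real form) is indeed the architecture of the proofs in the literature, and your first and third steps are essentially correct. But the middle step, which you yourself flag as the technical heart, is not proved, and the heuristic you offer for it is wrong as stated: for a non-Fuchsian Hitchin representation the Jordan projections do \emph{not} lie along the principal ray of the Weyl chamber. If they did, all eigenvalue gaps of $\rho(\gamma)$ would coincide with those of $\tau_n(j(\gamma))$ for some Fuchsian $j$, which characterizes the Fuchsian locus; in general the Jordan projections of a Hitchin representation fill out an open cone in the Weyl chamber. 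What the actual arguments use is quite different: either the differentiability/transversality properties of the Frenet curve to produce specific root vectors in the Lie algebra of the Zariski closure (Sambarino's infinitesimal Zariski closure), or Guichard's analysis of the limit curve, or in the bending setting the trivial observation that the closure contains the whole Fuchsian factor $\tau_n(\PSL(2,\mathds{R}))$. As written, your proposal therefore has a genuine gap at exactly the step that makes the theorem non-trivial, and the mechanism you propose to fill it would not work. Note also that for the purposes of this paper you could sidestep the gap entirely, as the author does, because the representations considered are bendings of $\tau_n\circ j$ and hence their Zariski closures contain $\tau_n(\PSL(2,\mathds{R}))$ by construction.
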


We could also use a weaker version of this theorem. Since our representations are constructed via bending, it is easy to see that their Zariski-closures contain a principal $\SL_2(\mathds{R})$. The classification of their Zariski-closures then follows from a paper by Dynkin (see tables 13 and 14 in \cite{Dynkin_SemisimplesubagelbrasofsemisimpleLiealgebras}, \cite{Sambarino_InfinitesimalZariskiClosure} also gives a proof of this).

Since $a$ is not a square, $\mathds{Z}[\sqrt{a}]^{\times}\simeq\langle\pm1\rangle\times\langle \omega\rangle$ by Dirichlet's Unit Theorem (Theorem 0.4.2 in \cite{Maclachlan_ArithmeticHyperbolic3Manifolds}), where $\omega$ is the fundamental unit of $\mathds{Z}[\sqrt{a}]$. Note that $\omega\sigma(\omega)=\pm1$.

\begin{proof}[Proof of Theorem \ref{theorem1}]
Let $n\geq3$ be odd and assume that $\zeta$ is not inner. Let $\mathds{Q}(\sqrt{d})$ be the quadratic extension of $\mathds{Q}$ associated to $\zeta$ with $d\in\mathds{N}$. Proposition \ref{proposition23} implies that $\prescript{}{\zeta}{\SL_n(\mathds{Z})}$ is commensurable with a conjugate of the group $\SU(\I_n,\sigma,\mathds{Z}[\sqrt{d}])$.

To construct thin Hitchin representations in $\prescript{}{\zeta}{\SL_n(\mathds{Z})}$  we bend $\rho$ using a diagonal matrix with positive coefficients $\B\in\prescript{}{\zeta}{\SL_n(\mathds{Z})}$ which is not in $\SO(\J_n,\mathds{R})$. Given such a matrix, Theorem \ref{theoremGuichard} and Lemma \ref{lemma12} then implies that $\rho_{\B}(\pi_1(S))$ is Zariski-dense in $\SL(n,\mathds{R})$. 
If $\sqrt{a}\in\mathds{Q}(\sqrt{d})$, one can bend by the matrix $$\B=\Diag(\omega^4,1,...,1,\omega^{-2},\omega^{-2})\in\prescript{}{\zeta}{\SL_n}(\mathds{Z}),$$ which lies in $\prescript{}{\zeta}{\SL_n}(\mathds{Z})$ by Lemma \ref{diagonalelements}. Suppose that $\sqrt{a}\not\in\mathds{Q}(\sqrt{d})$. Let $\sigma,\tau\in\Gal(\mathds{Q}(\sqrt{a},\sqrt{d})/\mathds{Q})$ be the automorphisms defined by 
\begin{align*}
    \sigma(\sqrt{a})=-\sqrt{a},\ &\sigma(\sqrt{d})=\sqrt{d}\\
    \tau(\sqrt{a})=\sqrt{a},\ &\tau(\sqrt{d})=-\sqrt{d}.
\end{align*}
Let $\theta$ be a fundamental unit in $\mathds{Z}[\sqrt{d}]^{\times}$. Since $\theta\sigma(\theta)\tau(\theta)\sigma(\tau(\theta))=\pm1$ and $\sigma(\theta)=\theta$, we deduce that $\theta^2\tau(\theta)^2=1$ and $\theta^2\sigma(\theta^2)=\theta^4\neq1$. It follows that $$\B=\Diag(\theta^2,1,...,1,\theta^{-4},1,...,1,\theta^2)\in\prescript{}{\zeta}{\SL_n(\mathds{Z})},$$ see Lemma \ref{diagonalelements}. We can use the latter matrix to bend.

In both cases, the sequence of bendings $(\rho_{\B^k})_{k}$ gives infinitely many $\MCG$-orbits of thin Hitchin representations. Indeed, suppose that there are only finitely many such orbits. By Theorem \ref{Strongapproximation} and Proposition \ref{propositionSUZ}, there exists a finite set of primes $\mathcal{V}$ such that for every prime $p$ not in $\mathcal{V}$ and any $k\geq 1$, the reduction of $\Tr(\rho_{\B^k})$ modulo $p$ is equal to
\begin{equation*}
    \left\{\begin{array}{ll}
        \mathds{F}_p & \mbox{if $d$ is a square modulo $p$} \\
        \mathds{F}_{p^2} & \mbox{if $d$ is not a square modulo $p$.}
    \end{array}\right.
\end{equation*}
There exists a polynomial $\PP\in\mathds{Z}[\X]$ such that $\Tr(\tau_n(\A))=\PP(\Tr(\A))$. For any prime $p$, the polynomial $\PP$ induces a map
$$\mathds{F}_p\to\mathds{F}_p,\ x\mapsto\PP(x)$$ and this map is not surjective for infinitely many primes (see Corollary 1.8 in \cite{Shallue_PermutationPolynomialsFiniteFields} for example). Pick a prime $p\not\in\mathcal{V}$ for which the map induced by $\PP$ is not surjective. Choose $k\geq 1$ such that $\B^k$ is not in $\SO(\J_n,\mathds{R})$ but is trivial modulo $p$, which exists since every element of a finite group has finite order. Then modulo $p$, $\Tr(\rho_{\B^k})\equiv\Tr(\rho)\equiv\PP(\Tr(j))[p]$ which is not $\mathds{F}_p$. This is a contradiction.

\end{proof}

\begin{proof}[Proof of corollary \ref{corollary1}]
It is a consequence of Theorem \ref{theorem1}, Theorem 1.1 in \cite{Long_ZariskidensesurfaceSL2k+1Z} and Proposition \ref{propositionprime}.
\end{proof}

\begin{proof}[Proof of Theorem \ref{theorem2}]
Let $n\geq4$ be even and suppose that $\zeta$ is not inner. Let $\mathds{Q}(\sqrt{d})$ be the quadratic extension of $\mathds{Q}$ associated to $\zeta$ with $d\in\mathds{N}$. Proposition \ref{proposition23} shows that $\prescript{}{\zeta}{\SL_n(\mathds{Z})}$ is commensurable with a conjugate of $\SU(\I_n,\sigma;\mathds{Z}[\sqrt{d}])$ (resp. $\SU(\I_{\frac{n}{2}},\overline{\phantom{s}}\otimes\sigma;\mathcal{O}\otimes\mathds{Z}[\sqrt{d}])$) if $\sqrt{a}\in\mathds{Q}(\sqrt{d})$ (resp. $\sqrt{a}\notin\mathds{Q}(\sqrt{d})$).

To construct thin Hitchin representations in $\prescript{}{\zeta}{\SL_n(\mathds{Z})}$  we bend $\rho$ using a diagonal matrix with positive entries $\B\in\prescript{}{\zeta}{\SL_n(\mathds{Z})}$ which is not in $\Sp(\J_n,\mathds{R})$. 
If $\sqrt{a}\in\mathds{Q}(\sqrt{d})$, we can use the matrix $$\B=\Diag(\omega^4,1,...,1,\omega^{-2},\omega^{-2}).$$
Otherwise, we can use the matrix $$\B=\Diag(\theta^2,\theta^{-2},1,...,1,\theta^{-2},\theta^2).$$ where $\theta$ is a fundamental unit in $\mathds{Z}[\sqrt{d}]^{\times}$. 
In both cases, the sequence of bendings $(\rho_{\B^k})_{k}$ gives infinitely many $\MCG$-orbits of thin Hitchin representations by the same argument as in the proof of Theorem \ref{theorem1}.



\end{proof}



We can now prove Theorems \ref{theorem3}, \ref{theorem4} and \ref{theorem5}. We use the same strategy as above. For some $\mathds{R}/\mathds{Q}$-forms $\prescript{}{\zeta}{\G}$ of a given algebraic split real Lie group $\G$ we exhibit a matrix $$\B\in\prescript{}{\zeta}{\G}(\mathds{Z})=\prescript{}{\zeta}{\SL_n(\mathds{Z})}\cap\G(\mathds{R})$$ which is diagonal (so that it commutes with $\rho(\gamma)$) with positive entries and such that $\rho_{\B}(\pi_1(S))$ is Zariski-dense in the corresponding Lie group. Note that, even thought there are several $1$-cocycles with value in $\Aut(\SL_n(\overline{\mathds{Q}}))$ which are $\tau_n$-compatible with $\xi$, they all induce the same $1$-cocycle on $\SO(\J_n,\overline{\mathds{Q}})$, $\textbf{G}_2(\overline{\mathds{Q}})$ and $\Sp(\J_n,\overline{\mathds{Q}})$.

\begin{proof}[Proof of Theorem \ref{theorem3}]
Assume that $n\geq5$ is odd. To construct thin Hitchin representations in $\prescript{}{\zeta}{\SO(\J_n)(\mathds{Z})}$ we bend $\rho$ using a diagonal matrix with positive entries $\B\in\prescript{}{\zeta}{\SO(\J_n)(\mathds{Z})}$ which is not in $\tau_n(\PGL(2,\mathds{R}))$ (nor in $\textbf{G}_2(\mathds{R})$ if $n=7$). Given such a matrix, Theorem \ref{theoremGuichard} and Lemma \ref{lemma11} show that $\rho_{\B}(\pi_1(S))$ is Zariski-dense in $\SO(\J_n,\mathds{R})$. For instance, one can use the matrix $$\B=\Diag(\omega^2,\sigma(\omega)^2,1,\ldots,1,\omega^2,\sigma(\omega)^2).$$

Using Theorem \ref{propositionSO}, one can show that the sequence $(\rho_{B^k})_k$ gives rise to infinitely many $\MCG$-orbits of thin Hitchin representations, in the same manner as in the proof of Theorem \ref{theorem1}.


If $n\equiv\pm1[8]$, then $\prescript{}{\zeta}{\SO(\J_n)}(\mathds{Z})$ is commensurable with a conjugate of $\SO(\J_n,\mathds{Z})$ by Lemma \ref{Qformsorthogonal}. 
Otherwise, Lemma \ref{Qformsorthogonal} shows that $\prescript{}{\zeta}{\SO(\J_n)}(\mathds{Z})$ is commensurable with a conjugate of $\SO(\Q,\mathds{Z})$ where $\Q\in\SL(n,\mathds{Q})$ is a symmetric matrix of signature equal to the signature of $\J_n$ and of Hasse invariant $\mathcal{E}_p(\Q)=(a,b)_{\mathds{Q}_p}\otimes(-1,-1)_{\mathds{Q}_p}$ for all primes $p$.

Suppose that $n\equiv\pm3[8]$. To conclude the proof, we claim that the above construction yields thin Hitchin representations in all $\SO(\Q,\mathds{Z})$ for $\Q\in\SL(2k+1,\mathds{Q})$ a symmetric matrix of signature $(k+1,k)$ not equivalent to $\I_{k+1,k}$. Let $\Q$ be such a symmetric matrix. Denote by $\Omega$ the set of prime numbers $p$ such that $$\mathcal{E}_p(\Q)\otimes(-1,-1)_{\mathds{Q}_p}\neq 1.$$ This set has even cardinality by Hilbert's Reciprocity Law, see Theorem 0.9.10 in \cite{Maclachlan_ArithmeticHyperbolic3Manifolds}. Theorem 7.3.6 in \cite{Maclachlan_ArithmeticHyperbolic3Manifolds} now implies that there exists a quaternion algebra $A$ over $\mathds{Q}$ that splits over $\mathds{R}$ and splits over $\mathds{Q}_p$ if and only if $p\not\in\Omega$. Since $\Q$ is not equivalent to $\I_{k+1,k}$, $\Omega$ is non-empty. In particular, $A\not\simeq\mathrm{M}_2(\mathds{Q})$.
Using this quaternion algebra in the construction described above yields thin Hitchin representations in $\SO(\Q,\mathds{Z})$.
\end{proof}

\begin{proof}[Proof of Theorem \ref{theorem5}]
Let $n\geq4$ be even. Then $\prescript{}{\zeta}{\Sp(\J_n)(\mathds{Z})}$ is commensurable with a conjugate of $\SU(\I_{\frac{n}{2}},\overline{\phantom{s}};\mathcal{O})$. To construct thin Hitchin representations in $\prescript{}{\zeta}{\Sp(\J_n)(\mathds{Z})}$ we bend $\rho$ with a diagonal matrix with positive entries $\B\in\prescript{}{\zeta}{\Sp(\J_n)(\mathds{Z})}$ which is not in $\tau_n(\PGL(2,\mathds{R}))$. Given such a matrix, Theorem \ref{theoremGuichard} and Lemma \ref{lemma11} show that $\rho_{\B}(\pi_1(S))$ is Zariski-dense in $\Sp(\J_n,\mathds{R})$. For instance, one can use the matrix $$\B=\Diag(\omega^2,...,\omega^2,\sigma(\omega)^2,...,\sigma(\omega)^2).$$
Using Theorem \ref{propositionSp}, one can show that the sequence $(\rho_{B^k})_k$ gives rise to infinitely many $\MCG$-orbits of thin Hitchin representations, in the same manner as in the proof of Theorem \ref{theorem1}.

\end{proof}

\begin{proof}[Proof of Corollary \ref{corollary2}]
It is a consequence of Theorem \ref{theorem5} and Proposition \ref{propositionsymplectic}.
\end{proof}

\begin{proof}[Proof of Theorem \ref{theorem4}]
Suppose $n=7$. To construct thin Hitchin representations in $\textbf{G}_2(\mathds{Z})$ we bend $\rho$ using a diagonal matrix with positive entries $\B\in\textbf{G}_2(\mathds{Z})$ which is not in $\tau_7(\PGL(2,\mathds{R}))$. Given such a matrix, Theorem \ref{theoremGuichard} and Lemma \ref{lemma11} show that $\rho_{\B}(\pi_1(S))$ is Zariski-dense in $\textbf{G}_2(\mathds{R})$.
For instance, one can use the matrix $$\B=\Diag(\omega^2,\omega^2,1,1,1,\sigma(\omega)^2,\sigma(\omega)^2).$$
Using Lemma 5.12 in \cite{Audibert_ZariskidenseHitchininuniform}, one can show that the sequence $(\rho_{B^k})_k$ gives rise to infinitely many $\MCG$-orbits of thin Hitchin representations, in the same manner as in the proof of Theorem \ref{theorem1}.
\end{proof}

\bibliography{main}

\end{document}